\providecommand{\arxiv}[2][]{\href{http://www.arXiv.org/abs/#2}{arXiv:#2}}
\DeclareSymbolFontAlphabet{\mathbb}{AMSb}
\DeclareMathAlphabet{\mathbbb}{U}{bbold}{m}{n}
\def\interval[#1,#2]{[#1,#2]}
\newcommand{\NEW}[1]{{\em #1}}
\newcommand{\maxpolgraph}[2]{#2^{#1}}
\newcommand{\maxpolmap}[2]{#2^{#1}}
\newcommand{\cP}{\mathcal{P}}
\newcommand{\cR}{\mathcal{R}}
\newcommand{\cZ}{\mathcal{Z}}
\newcommand{\polar}[1]{{#1}^{\circ}}
\newcommand{\ipolar}[1]{{#1}^{\circ}_i}
\newcommand{\floor}[1]{\lfloor #1\rfloor}
\newcommand{\R}{\mathbb{R}}
\newcommand{\Q}{\mathbb{Q}}
\newcommand{\N}{\mathbb{N}}
\newcommand{\Z}{\mathbb{Z}}
\newcommand{\sK}{\mathcal{K}}
\newcommand{\sG}{\mathcal{G}}
\newcommand{\RowSpace}[1]{\cR(#1)}
\newcommand{\maxplus}{\R_{\max}}
\newcommand{\defi}{:=}%
\newcommand{\mpzero}{\mathbbb{0}}
\newcommand{\mpone}{\mathbbb{1}}
\newcommand{\setexcept}[2]{[#1\setminus #2]}
\newtheorem{theorem}{Theorem}
\newtheorem{proposition}[theorem]{Proposition}
\newtheorem{lemma}[theorem]{Lemma}
\newtheorem{corollary}[theorem]{Corollary}
\newtheorem{assumption}{Assumption}
\theoremstyle{definition}
\newtheorem{definition}{Definition}
\theoremstyle{remark}
\newtheorem{remark}{Remark}
\newtheorem{example}{Example}
\newcommand{\argmax}{\arg\max}
\newcommand{\uvector}{\makebox{\rm e}}
\newcommand{\mon}{-}
\newcommand{\mop}{+}
\title{Tropical polar cones, hypergraph transversals, and mean payoff games}
\author{Xavier {A}llamigeon}
\address[X.~Allamigeon and S.~Gaubert]{INRIA and CMAP, \'Ecole Polytechnique, 91128 Palaiseau Cedex France}
\email[X.~Allamigeon]{xavier.allamigeon@inria.fr}
\author{{S}t{\'e}phane {G}aubert}
\email[S.~Gaubert]{stephane.gaubert@inria.fr}
\author{Ricardo D. Katz}
\address[R.D.~Katz]{CONICET. Postal address:\!\! Instituto de Matem\'atica 
``Beppo Levi'',\! Universidad Nacional de Rosario, 
Avenida Pellegrini 250, 2000 Rosario, Argentina.}
\email{rkatz@fceia.unr.edu.ar}
\date{April 15, 2010, revised October 27, 2010}
\keywords{Max-plus semiring, max-plus convexity, tropical convexity, polyhedra, hypergraph transversals, 
minimal hitting sets, minimal solutions}
\subjclass[2010]{14T05 (Primary) 15A80, 52A01, 16Y60, 06A07 (Secondary)}
\thanks{This work was performed when the first author was with EADS Innovation Works, SE/IA -- Suresnes, France and CEA, LIST MeASI -- Gif-sur-Yvette, France.\\
It was partially supported by the Arpege programme of the French National Agency of Research (ANR), project ``ASOPT'', number ANR-08-SEGI-005 , by the Digiteo project
DIM08 ``PASO'' number 3389, and by the French-Romanian LEA ``Mathmode'' cooperation programme.}
\begin{document}
\begin{abstract}
We discuss the tropical analogues of several basic questions of convex
duality. In particular, the polar of a tropical polyhedral cone represents the set of linear
inequalities that its elements satisfy. 
 We characterize the extreme rays
of the polar in terms of certain minimal set covers
which may be thought of as weighted generalizations of minimal transversals
in hypergraphs. We also give a tropical analogue of Farkas lemma,
which allows one to check whether a linear inequality is implied
by a finite family of linear inequalities. Here, the certificate
is a strategy of a mean payoff game. We discuss examples, 
showing that the number of extreme rays of the polar of the tropical cyclic polyhedral cone is polynomially bounded, and that there is no unique minimal system of inequalities
defining a given tropical polyhedral cone.
\end{abstract}
\maketitle

\section{Introduction}
The max-plus or tropical analogue of classical convexity has emerged
in a number of works. Early contributions come back to Zimmermann~\cite{zimmerman77} and Cuninghame-Green~\cite{cuni79}.
The analogues of cones (thought of as modules over the tropical semiring)
have been studied by Litvinov, Maslov, and Shpiz~\cite{litvinov00}
as part of ``Idempotent analysis'', and by Cohen, Gaubert and Quadrat~\cite{cgq02}, motivated by discrete event systems. Relations with abstract convexity
have appeared in a further work with Singer~\cite{cgqs04}, and in the work of Briec and Horvath~\cite{BriecHorvath04}. The interest in the subject has been renewed after the work of Develin and Sturmfels~\cite{DS}, who developed a combinatorial approach motivated by tropical geometry. This was at the origin
of a number of works of the same authors and of Joswig, Yu, and Block, see~\cite{joswig04,JSY07,blockyu06}. Some recent developments include~\cite{BSS,katz08,joswig-2008,GM08,AGG10}.

In classical convex analysis, duality techniques play an important role, and,
in the light of the current development of tropical convexity,
it is natural to ask whether these carry over to the tropical setting.

In particular, a polyhedral cone can be classically represented in two different ways, either 
internally, ``by generators,'' as the set of nonnegative linear combinations of a finite set of vectors, or externally, ``by relations,'' as the solution set of a finite system
of homogeneous linear inequalities. 
If the cone is pointed, among the generating sets of vectors, there turns
out to be a unique minimal one (up to a scaling of each vector),
which consists of representatives of the extreme rays of the cone.
If the cone is of full dimension, among the defining systems
of inequalities, there turns out to be a unique minimal one (up to a scaling
of each inequality), corresponding to the facets of the cone, or, 
by duality, to the extreme rays of its polar.
Moreover, by Farkas lemma, every (homogeneous, linear) inequality
verified by all the vectors of the cone can be obtained by taking a nonnegative linear combination of the ``facet defining'' inequalities.

We may ask for the tropical analogues of these properties.
For the internal representation, precisely the same
result holds: the tropical analogue
of the Minkowski theorem~\cite{GK06a,BSS,GK} shows that a tropical
polyhedral cone is generated by its extreme rays, and that
every generating set must contain one representative
of each extreme ray.

The external representation has also been studied in the
tropical setting. The analogue of the polar, which consists of the set
of inequalities verified by the elements of the cone,
was introduced in~\cite{katz08}. 
The polar of a tropical polyhedral cone is in fact a tropical
polyhedral cone in a space of double dimension. 
It has a finite family of extreme rays, which determine a finite family
of ``extreme'' inequalities, having the property that every
inequality verified by all the vectors of the cone is a linear combination
of the extreme inequalities. Moreover, the set of extreme inequalities
is the unique minimal one having this property (up to a scaling of
each inequality). 

In this paper, we pursue the investigation of tropical analogues of classical properties concerning convex duality, and in particular polars.

First, we establish (Theorem~\ref{TheorExtremeMinimal} and 
Proposition~\ref{prop-corres} below) 
a characterization of the extreme rays of the polar in terms of 
minimal set covers. The latter may be thought of as weighted generalizations
of minimal transversals (or hitting sets) in hypergraphs.
It follows that enumerating the extreme rays of the polar is at least as
hard as the well known problem of enumerating the minimal transversals of an hypergraph (Corollary~\ref{CorTransversal} and Proposition~\ref{CorEquivExtPolars}).
Moreover, by combining this characterization
with a result of Elbassioni~\cite{Elbassioni08}, building
on a line of works on minimal transversals and minimal solutions
of monotone inequalities by Fredman, Khachiyan, Boros, Elbassioni,
Gurvich, and Makino~\cite{Fredman,BEGKM02,KBEG06},  
it follows that the set of extreme rays of the
polar of a tropical polyhedral cone can be computed
in incremental quasi-polynomial time.

Next, we answer to a question raised in~\cite{katz08}, which asks
for a tropical analogue of Farkas lemma. Indeed, it was observed there
that the statement of the classical Farkas lemma is no
longer valid in the tropical world:
there are inequalities which can be logically deduced from 
a finite system of inequalities but which cannot be obtained
by taking linear combinations of the inequalities in this system
(see Figure~\ref{FigDeduc} below for an example).

We show here that there is indeed a tropical analogue of Farkas
lemma, if we consider properly its role. The classical result
provides a certificate (nonnegative weights, or Lagrange multipliers),
which allows one to easily check (by computing a linear combination)
that an inequality follows from other inequalities. The tropical analogue,
Theorem~\ref{theo-farkas} below, 
shows that there is still a concise certificate,
which is no longer a collection of Lagrange multipliers, but 
consists of a strategy of a mean payoff game. 
It also follows from our approach that checking whether an inequality is
implied by a finite family of inequalities is polynomial
time equivalent to the problem of solving a mean payoff
game
(the latter was already known to be in {\sc NP} $\cap$ {\sc co-NP}, 
and the existence
of a polynomial time algorithm for this problem is a long standing open question).

Theorem~\ref{theo-farkas} relies on a recent work of Akian, 
Gaubert and Guterman~\cite{AGG,aggut10}, 
setting up a correspondence between mean payoff games 
and external representations of tropical polyhedra, 
and showing that the value of a mean payoff game is nonnegative if, 
and only if, the corresponding tropical polyhedron is non-empty. 
In the present paper, we use similar techniques to show that the 
decision problem associated with the tropical Farkas lemma is also 
equivalent to a mean payoff game problem (Corollary~\ref{coro-equiv}).

We also discuss examples, computing in particular the 
extreme rays of the polar of the tropical analogues of cyclic polyhedral cones. 
Whereas in classical algebra, cyclic polyhedral cones
have an exponential number of facets (actually, they maximize the numbers of
facets among all the cones of the same
dimension and with the same number of extreme rays), in the tropical
setting their polars turn out to have only a polynomial number
of extreme rays, see Proposition~\ref{PropExtCyclic} below. 
We finally show that unlike in the classical case,
there is no unique minimal set of inequalities defining a given
tropical polyhedral cone.

\section{Extreme elements of the polar}\label{SecExtremePolar}
 
We start by recalling some basic definitions and notation. 
Tropical or max-plus algebra is the analogue of classical linear algebra 
developed over the max-plus semiring $\maxplus$, 
which is the set $\R\cup\{-\infty\}$ equipped with the 
addition $(a,b)\mapsto \max(a,b)$ and the multiplication $(a,b)\mapsto
a+b$. To emphasize the semiring structure, we write $a\oplus b:=\max(a,b)$,
$ab:=a+b$, $\mpzero:=-\infty$ and $\mpone:=0$.  
The semiring operations are extended in 
the usual way to matrices over the max-plus semiring: 
$(A\oplus B)_{ij}:=A_{ij}\oplus B_{ij}$, 
$(AB)_{ij}:=\oplus_k A_{ik} B_{kj}$ and $(\lambda A)_{ij}:=\lambda A_{ij}$ 
for all $i,j$, 
where $A,B$ are matrices of compatible sizes and $\lambda \in \maxplus$. 
In what follows, we shall denote by $G_{\cdot i}$ 
(resp. $G_{i \cdot }$) the $i$th column (resp. row) of the matrix $G$,  
and by $\uvector^i$ the $i$th vector of the canonical basis of $\maxplus^n$, 
i.e. the vector defined by $(\uvector^i)_j\defi \mpone $ if $j=i$ and 
$(\uvector^i)_j\defi \mpzero $ otherwise. 

Several classical concepts and results have their tropical analogues. 
In particular, the tropical analogues of convex sets were introduced 
by Zimmermann~\cite{zimmerman77}. Since in the max-plus semiring 
any scalar is ``positive'', i.e. for any $\lambda \in \maxplus$ 
we have $\lambda \geq \mpzero $, it is natural to define the  
{\em tropical segment} joining two points $x,y\in \maxplus^n$ 
as the set of points of the form $\lambda x \oplus \mu y$ where 
$\lambda$ and $\mu $ are elements of $\maxplus $ such that 
$\lambda \oplus \mu = \mpone $. Then, a subset of $\maxplus^n$ 
is said to be a \NEW{tropical convex set} if it contains any tropical 
segment joining two of its points. Similarly, the
tropical cone generated by $x,y\in \maxplus^n$ is defined as the 
set of vectors of the form $\lambda x \oplus \mu y$ 
where now $\lambda$ and $\mu $ are arbitrary elements of $\maxplus $.  
A \NEW{tropical (convex) cone} is a subset $\sK$ of $\maxplus^n$ 
which contains any tropical cone generated by two of its vectors. 
An example of tropical convex set is given on the 
left hand side of Figure~\ref{TropicalConvexity}:
the tropical convex set is the closed gray region together with the
horizontal segment joining the point $u$ to it. Three tropical
segments in general position are represented in bold. 
Comparing the shapes of these segments with the shape of the set, 
one can check geometrically that this set is a tropical convex set. 
A tropical cone generated by two vectors $u,v\in \maxplus^2$ 
is represented on the right hand side of Figure~\ref{TropicalConvexity} 
by the unbounded gray region.  

\begin{figure}
\begin{center}
\input{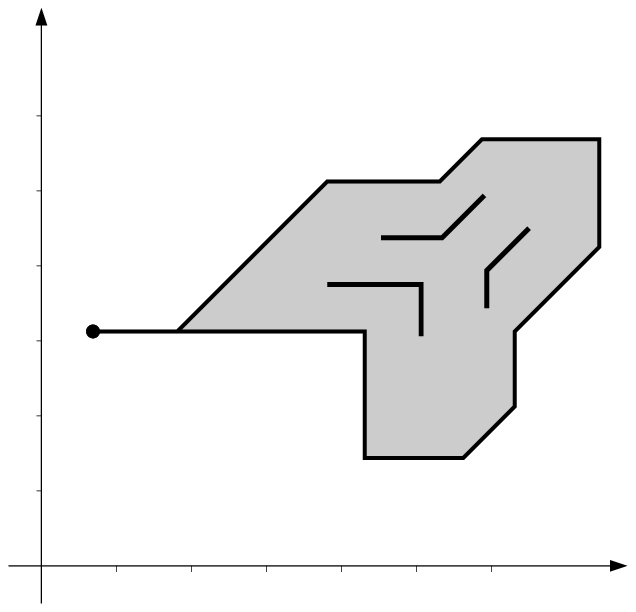}
\quad
\input{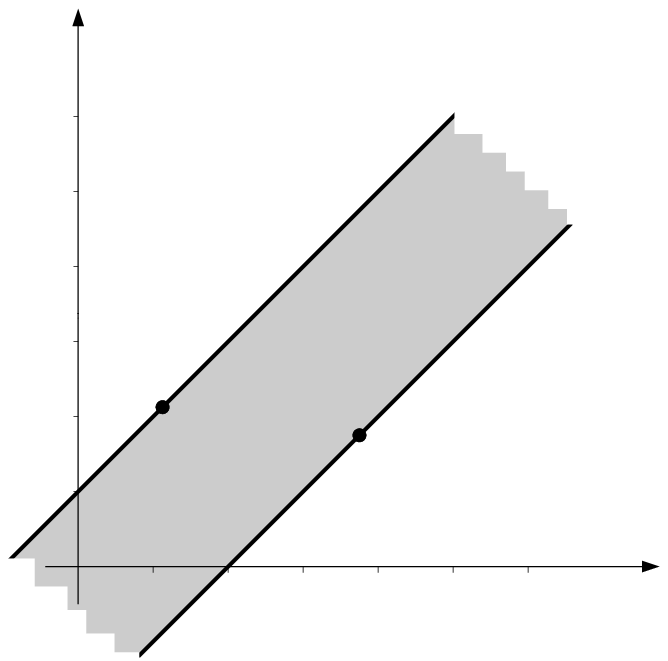}
\end{center}
\caption{Illustration of tropical convexity in $\maxplus^2$. 
(Left): A tropical convex set and three tropical 
segments in general position. (Right): The tropical cone generated 
by the vectors $u$ and $v$.}
\label{TropicalConvexity}
\end{figure}
  
Given a tropical cone $\sK \subseteq \maxplus^n$, 
we shall say that a non-identically $\mpzero$ vector $x\in \sK$ is 
\NEW{extreme} in $\sK$ if $x=y\oplus z$ with $y,z\in \sK$ 
implies $x=y$ or $x=z$. The set of scalar 
multiples of an extreme vector of $\sK$ is an \NEW{extreme ray} of $\sK$. 
A tropical cone $\sK$ is said to be  \NEW{finitely generated} (or \NEW{polyhedral}) 
if it contains a finite set of vectors $\left\{ y^r \right\}_{r\in [p]}$ 
such that every vector $x \in \sK$ can be expressed as
a \NEW{tropical linear combination} of these vectors, 
meaning that $x=\oplus_{r\in[p]} \lambda_r y^r$ 
for some $\lambda_r \in \maxplus $. Here, and in the sequel, 
we set $[p]:=\{1,\ldots,p\}$ for any $p\in \N$.
 
The \NEW{polar} of a subset $\sK\subseteq \maxplus^n$ can be defined~\cite{katz08} as 
\begin{equation}
\polar{\sK}\defi \left\{ (a,b)\in (\maxplus^n)^2 \mid a x \leq b x ,\; \forall x\in \sK\right\} \; , 
\end{equation}
where $x y \defi \oplus_{i\in[n]} x_i y_i$ for any pair of vectors $x,y\in \maxplus^n$. 
Note that the polar of $\sK$ is a tropical cone of $(\maxplus^n)^2$ which represents the set of 
(tropical) linear inequalities satisfied by the elements of $\sK$. 
In what follows, we shall usually identify an element $(a,b)$ of $\polar{\sK}$ 
with the corresponding inequality $a x \leq b x$, 
which will be called \NEW{valid} for $\sK$ because it is satisfied by all its elements. 

In this paper, we are mainly interested in polyhedral cones. 
Therefore, we shall consider the case in which the tropical cone $\sK$ 
is the \NEW{row space} $\RowSpace{G}$ of some $p\times n$ matrix $G$ 
with entries in $\maxplus$, i.e.
\begin{equation}
\RowSpace{G}:=\left\{ x\in \maxplus^n \mid 
x= \oplus_{r\in[p]} \lambda_r G_{r \cdot }\; , \;
 \lambda_r \in \maxplus \makebox{ for } r\in[p] \right\} \; , 
\end{equation}
where without loss of generality  
we assume that $G$ does not have an identically $\mpzero $ column. 
Observe that in this case we have
\[
\polar{\RowSpace{G}}=\left\{ (a,b)\in (\maxplus^n)^2 \mid G a \leq G b \right\} \; , 
\]
which implies that $\polar{\RowSpace{G}}$ has a finite number of extreme rays, 
because it can be expressed as the solution set of a two sided (tropical) 
linear system of equations, namely 
\[
\polar{\RowSpace{G}}=\left\{ (a,b)\in (\maxplus^n)^2 \mid G a \oplus G b = G b \right\} \; ,  
\] 
and the solution sets of such systems are known to be finitely generated 
tropical cones (see~\cite{butkovicH,gaubert92a,maxplus97}). 
We refer the reader to~\cite{GK09,katz08,GK06a} for more information. 

By the separation theorem for closed cones 
of~\cite{zimmerman77,shpiz,cgqs04}, 
it follows that $\RowSpace{G}$ is characterized by its polar cone, i.e.
\[
\RowSpace{G} =\left\{x\in \maxplus^n \mid a x \leq b x\; , \; \forall (a,b)\in \polar{\RowSpace{G}} \right\}  \; .
\]
This implies that $\RowSpace{G}$ can be expressed as 
the solution set of the (finite) set of linear inequalities 
associated with the extreme rays of $\polar{\RowSpace{G}}$. 
More precisely, the extreme rays of $\polar{\RowSpace{G}}$ 
determine a finite family of linear inequalities defining $\RowSpace{G}$,  
which has the property that any valid inequality for $\RowSpace{G}$ 
can be expressed as a (tropical) linear combination of the inequalities in this family. 
  
We shall say that a linear inequality in the variables $x_j$, $j\in [n]$,
is \NEW{trivial} if it is of the form $x_i\geq x_i$ or $x_i\geq \mpzero $, 
and that it is of \NEW{type $i$} if it is of the form 
\[
x_i \leq \bigoplus_{j\in\setexcept{n}{i}} z_j x_j \enspace ,
\]
where $z\in \maxplus^{n-1}$ and 
\[
\setexcept{n}{i}\defi [n] \setminus \{i\} = \{1,\ldots , n\}\setminus \{i\} \enspace .
\]

An inequality satisfied by the elements of a tropical cone is said to be 
\NEW{extreme} if it corresponds to an extreme vector of the polar of this cone.
A \NEW{system of representatives} of the extreme inequalities
is any family containing one and only one inequality proportional
to every extreme inequality.

\begin{proposition}\label{PropTypeI}
Every extreme inequality is either
proportional to a trivial inequality or proportional
to an inequality of type $i$, for some $i\in [n]$.
\end{proposition}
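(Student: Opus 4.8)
The plan is to argue directly from the definition of an extreme element of $\polar{\sK}$, where $\sK=\RowSpace{G}$. The argument rests on two elementary observations. First, \emph{shrinking the left-hand side of a valid inequality preserves validity}: if $(a,b)\in\polar{\sK}$ and $a'\le a$ componentwise, then $(a',b)\in\polar{\sK}$, since $a'x\le ax\le bx$ for every $x\in\sK$. Second, certain ``universally valid'' inequalities always belong to $\polar{\sK}$: the pair $(\mpzero,c)$ for any $c\in\maxplus^n$ (encoding $\mpzero\le cx$) and the pair $(\uvector^i,\uvector^i)$ (encoding $x_i\le x_i$). With these at hand, I would successively constrain the support of $a$ and then the support of $b$.

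\emph{Step 1 (support of $a$).} I would first show that $a$ has at most one coordinate different from $\mpzero$. If $\card{\supp(a)}\ge 2$, fix $i\in\supp(a)$ and write $a=a_i\uvector^i\oplus a''$, where $a''$ coincides with $a$ except that its $i$-th entry is set to $\mpzero$; then $a''\ne\mpzero$. By the shrinking observation, both $(a_i\uvector^i,b)$ and $(a'',b)$ lie in $\polar{\sK}$; their tropical sum is $(a,b)$, and neither equals $(a,b)$ (the first components differ, in the $i$-th coordinate and in some other coordinate respectively). This contradicts extremality, so $a=\mpzero$ or $a=a_i\uvector^i$ for a single $i$.

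\emph{Step 2 (the two resulting cases).} If $a=\mpzero$, then $b\ne\mpzero$, since an extreme vector is not identically $\mpzero$; applying the support argument of Step~1 to $b$ — now using that $(\mpzero,b_j\uvector^j)$ and $(\mpzero,b'')$ are universally valid — forces $\card{\supp(b)}=1$, so $(a,b)=b_j(\mpzero,\uvector^j)$ is proportional to the trivial inequality $x_j\ge\mpzero$. If instead $a=a_i\uvector^i$ with $a_i\ne\mpzero$, rescaling $(a,b)$ by $a_i^{-1}$ lets me assume $a=\uvector^i$, so the inequality reads $x_i\le bx$. If $b=\uvector^i$ this is the trivial inequality $x_i\ge x_i$. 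Otherwise $b\ne\uvector^i$, and it remains to prove that $b_i=\mpzero$, which makes the inequality one of type $i$. Assuming $b_i\ne\mpzero$, write $b=b_i\uvector^i\oplus b'$ with $b'$ supported off $i$: if $b_i\ge\mpone$ then $\uvector^i\oplus b=b$, hence $(\uvector^i,b)=(\uvector^i,\uvector^i)\oplus(\mpzero,b)$; if $\mpzero<b_i<\mpone$ then $b_ix_i<x_i$ for finite $x_i$, so validity of $x_i\le bx$ already entails validity of $x_i\le b'x$, whence $(\uvector^i,b')\in\polar{\sK}$ and $(\uvector^i,b)=(\uvector^i,b')\oplus(\mpzero,b)$. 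In either subcase $(\uvector^i,b)$ is a tropical sum of two elements of $\polar{\sK}$, each distinct from it, contradicting extremality; thus $b_i=\mpzero$.

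The support bookkeeping in Steps~1 and~2 is routine, so I expect the only genuinely delicate point to be the subcase $\mpzero<b_i<\mpone$ of Step~2: there the ``diagonal'' coefficient cannot simply be dropped but must be recognized as redundant, and this is precisely where the difference between the unit $\mpone$ and a strictly smaller finite scalar enters — the only step that uses more than the ordered-semiring structure. (I would also record in passing that the rescalings by $a_i^{-1}$ and $b_j^{-1}$ are legitimate, the indices $i$ and $j$ lying in the relevant supports and these scalars therefore being finite.)
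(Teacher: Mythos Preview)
Your proof is correct and follows essentially the same route as the paper's: first use the ``shrinking the left-hand side'' observation to force $a$ to have singleton support, then handle $a=\mpzero$ by the same support argument on $b$, and for $a=\uvector^i$ split on the value of $b_i$ to show $b_i=\mpzero$ or $b=\uvector^i$. Your case split is slightly more economical (you merge $b_i=\mpone$ and $b_i>\mpone$ into a single case $b_i\ge\mpone$, and in the subcase $b_i<\mpone$ you use $(\mpzero,b)$ rather than $(\mpzero,b_i\uvector^i)$ as the second summand), but the ideas and decompositions are the same as the paper's.
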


\begin{proof}
Firstly, note that any vector of the form $(\mpzero,\uvector^i)$, 
corresponding to the trivial inequality $x_i \geq \mpzero $, 
belongs to the polar of every cone. Therefore, 
these inequalities are clearly extreme and any extreme inequality 
corresponding to a vector of the form $(\mpzero, b)$, 
where $b\in \maxplus^n$, must be a scalar multiple of one of these inequalities. 
It can also be checked that the vectors $(\uvector^i,\uvector^i)$, 
corresponding to the trivial inequalities $x_i\leq x_i$, 
are also extreme because vectors of the form $(\uvector^i, \lambda \uvector^i)$, 
where $\lambda < \mpone$, do not belong to the polar of $\RowSpace{G}$ 
(recall that we assume that $G$ does not contain an identically $\mpzero $ column).  

Consider now the inequality $ax\leq bx$ corresponding to a 
vector $(a,b)$ of $\polar{\RowSpace{G}}$ with $a\neq \mpzero$. 
Since 
\[
(a'\oplus a'') x \leq b x \implies a' x \leq b x \mbox{ and } a'' x \leq b x \; ,
\] 
it follows that $a x\leq b x$ can only be extreme if $a$ is a scalar multiple of $\uvector^i$, for some $i\in [n]$. 
Therefore, without loss of generality, we may assume that $a=\uvector^i$. 
We next show that the inequality $\uvector^i x\leq b x$ is extreme only if 
$b=\uvector^i$ or $b_i=\mpzero $, 
which in the latter case means that $\uvector^i x\leq b x$ is of type $i$. 

Assume that $\uvector^i x\leq b x$ is not of type $i$, 
i.e.\ that $b_i\neq \mpzero$. If $b_i < \mpone $, 
then $(\uvector^i,\oplus_{j\in\setexcept{n}{i}} b_j \uvector^j)\in \polar{\RowSpace{G}}$ 
and as $(\uvector^i,b)= (\uvector^i,\oplus_{j\in\setexcept{n}{i}} b_j \uvector^j)\oplus (\mpzero, b_i \uvector^i)$, we conclude that 
$(\uvector^i,b)$ is not extreme. Analogously, if $b_i > \mpone $, since 
$(\uvector^i,b)= (\uvector^i ,\uvector^i )\oplus (\mpzero, b )$, it follows that neither in this case $(\uvector^i,b)$ is extreme. 
Finally, if $b_i = \mpone $, we have $(\uvector^i,b)= (\uvector^i,\uvector^i)\oplus (\mpzero, \oplus_{j\in\setexcept{n}{i}} b_j \uvector^j)$ 
implying that $(\uvector^i,b)$ is extreme only if $ \oplus_{j\in\setexcept{n}{i}} b_j \uvector^j=\mpzero$,  
i.e.\ only if $b=\uvector^i$. 
\end{proof}

\begin{definition}
The \NEW{$i$th polar} $\ipolar{\sK}$ of a subset $\sK \subseteq \maxplus^n$ is the tropical cone
\begin{equation} 
\ipolar{\sK} \defi \left\{ b\in \maxplus^n \mid b_i x_ i\leq \oplus_{j\in\setexcept{n}{i}} b_j x_j\; ,\; \forall x \in \sK \right\} \; .
\end{equation}
\end{definition}

Since $b\in \maxplus^n$ belongs to the $i$th 
polar $\ipolar{\sK}$ if, and only if,  
$(b_i\uvector^i,\oplus_{j\in\setexcept{n}{i}} b_j \uvector^j)\in (\maxplus^n)^2$ 
belongs to the polar $\polar{\sK}$, it follows that 
the extreme inequalities of type $i$ of 
$\sK$ correspond to the extreme rays of $\ipolar{\sK}$ associated 
with extreme vectors $b$ such that $b_i\neq \mpzero $. Moreover, 
as $\uvector^j\in \ipolar{\sK} $ for all $j\in\setexcept{n}{i}$, 
note that any extreme vector $b$ of $\ipolar{\sK} $ with 
$b_i = \mpzero $ must be a scalar multiple of one of these vectors 
of the canonical basis of $\maxplus^n$. Therefore, 
by Proposition~\ref{PropTypeI}, it follows that 
the study of the extreme inequalities of $\sK$ 
reduces to the study of the extreme rays of $\ipolar{\sK}$. 
In fact, the two underlying enumeration problems are polynomial time
equivalent, as shown by the following result.

\begin{proposition}\label{CorEquivExtPolars}
The enumeration problem for the extreme rays of the polar 
of a tropical polyhedral cone is polynomial time equivalent to 
the enumeration problem for the extreme rays of the $i$th polar 
of a tropical polyhedral cone. 
\end{proposition}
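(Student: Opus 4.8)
The plan is to establish the two reductions separately, using the relationship between the polar $\polar{\sK}$ and the collection of $i$th polars $\ipolar{\sK}$ that was just developed.

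For the easy direction, suppose we can enumerate the extreme rays of the polar of any tropical polyhedral cone. Given a cone $\sK=\RowSpace{G}\subseteq\maxplus^n$ and an index $i\in[n]$, I would first enumerate the extreme rays of $\polar{\sK}\subseteq(\maxplus^n)^2$. By Proposition~\ref{PropTypeI}, each such extreme ray is proportional either to a trivial inequality or to an inequality of type $j$ for some $j\in[n]$; from the discussion preceding the statement, the extreme vectors $b$ of $\ipolar{\sK}$ with $b_i\neq\mpzero$ are exactly those coming from the extreme rays of $\polar{\sK}$ of the form $(b_i\uvector^i,\oplus_{j\in\setexcept{n}{i}}b_j\uvector^j)$, i.e. of type $i$, while the remaining extreme vectors of $\ipolar{\sK}$ are just the canonical basis vectors $\uvector^j$, $j\in\setexcept{n}{i}$. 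So one simply selects from the enumerated list those extreme rays whose first component is a scalar multiple of $\uvector^i$, reads off the corresponding $b$, and appends the vectors $\uvector^j$ for $j\neq i$. This is clearly a polynomial-time transformation.

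For the converse direction, suppose we can enumerate the extreme rays of the $i$th polar of any tropical polyhedral cone, for any choice of $i$. Given $\sK=\RowSpace{G}\subseteq\maxplus^n$, I would run this procedure $n$ times, once for each $i\in[n]$, obtaining the extreme rays of $\ipolar{\sK}$ for all $i$. For each $i$ and each extreme vector $b$ of $\ipolar{\sK}$ with $b_i\neq\mpzero$, form the vector $(b_i\uvector^i,\oplus_{j\in\setexcept{n}{i}}b_j\uvector^j)\in(\maxplus^n)^2$; by the remarks after the definition of the $i$th polar these are precisely the extreme inequalities of type $i$. Together with the trivial extreme inequalities $(\mpzero,\uvector^i)$ and $(\uvector^i,\uvector^i)$ for $i\in[n]$ identified in the proof of Proposition~\ref{PropTypeI}, Proposition~\ref{PropTypeI} guarantees that this finite list contains a representative proportional to every extreme ray of $\polar{\sK}$. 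After discarding duplicates (two vectors lying on the same ray are detected in polynomial time), one obtains a system of representatives of the extreme rays of $\polar{\sK}$. Since $n$ is polynomial in the input size, this is again a polynomial-time reduction.

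The only point requiring a little care---and the place I would expect to be the mild obstacle---is checking that in the converse direction the list really contains \emph{every} extreme ray of $\polar{\sK}$, with no omissions, and that proportionality among the produced vectors is correctly detected so that the final output is a genuine system of representatives rather than a list with hidden repetitions or spurious entries. This is handled by invoking Proposition~\ref{PropTypeI} for completeness and by noting that an extreme vector of $\ipolar{\sK}$ with $b_i\neq\mpzero$ yields, after normalisation, a uniquely determined type-$i$ inequality, so distinct extreme rays of the various $\ipolar{\sK}$ (excluding the basis-vector rays, which only contribute trivial inequalities) map to distinct extreme rays of $\polar{\sK}$.
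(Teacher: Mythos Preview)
Your ``converse direction'' (using an $i$th-polar oracle to enumerate the extreme rays of $\polar{\sK}$) is essentially the paper's first reduction and is correct: summing over $i\in[n]$, the oracle outputs together have total size polynomially bounded by the size of the desired output, so the reduction is output-polynomial.

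The gap is in what you call the ``easy direction.'' You propose to compute the extreme rays of $\ipolar{\RowSpace{G}}$ by calling the polar oracle on $\RowSpace{G}$ itself and then filtering out the type-$i$ inequalities. But an enumeration reduction must run in time polynomial in the size of the input \emph{and the desired output}, and the oracle call's cost includes the size of what the oracle returns. The polar $\polar{\RowSpace{G}}$ may have extreme rays of type $j$ for every $j\in[n]$, and there is no a priori polynomial bound on their total number in terms of the number of type-$i$ extreme rays alone. So calling the oracle on $\RowSpace{G}$ can produce an output that is not polynomially bounded by the size of $\ipolar{\RowSpace{G}}$, and the reduction fails to be polynomial time.

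The paper avoids this by constructing, from $\RowSpace{G}$ and the chosen index $i$, a \emph{new} cone $\sK$ generated by the rows of $G$ together with the unit vectors $\uvector^j$ for $j\in\setexcept{n}{i}$. Because each $\uvector^j$ with $j\neq i$ now lies in $\sK$, no type-$j$ inequality (for $j\neq i$) can be valid on $\sK$, so every non-trivial extreme inequality of $\sK$ is of type $i$. One then checks that the map $b\mapsto(b_i\uvector^i,\oplus_{j\neq i}b_j\uvector^j)$ is a bijection between $\ipolar{\RowSpace{G}}$ and the relevant part of $\polar{\sK}$, carrying extreme vectors to extreme vectors. Thus the polar oracle applied to $\sK$ returns exactly (up to the $2n$ trivial rays) the extreme rays of $\ipolar{\RowSpace{G}}$, and the output sizes match. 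This auxiliary construction is the missing idea in your argument.
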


\begin{proof}
Let $\RowSpace{G}$ be a tropical polyhedral cone. As we already showed, 
the extreme inequalities of type $i$ of $\RowSpace{G}$ 
correspond to the extreme rays of $\ipolar{\RowSpace{G}}$ associated 
with extreme vectors $b$ such that $b_i\neq \mpzero $. Moreover, 
as $\uvector^j\in \ipolar{\RowSpace{G}} $ for all $j\in\setexcept{n}{i}$, 
any extreme vector $b$ of $\ipolar{\RowSpace{G}} $ with 
$b_i = \mpzero $ must be a scalar multiple of one of these vectors 
of the canonical basis of $\maxplus^n$. Therefore, 
by Proposition~\ref{PropTypeI}, it follows that 
the enumeration problem for the extreme rays of $\polar{\RowSpace{G}}$ 
reduces to the enumeration problem for the extreme rays of 
$\ipolar{\RowSpace{G}}$ for $i\in [n]$. 

Conversely, given a tropical polyhedral cone $\RowSpace{G}$, 
let $\sK\subseteq \maxplus^n$ be the tropical cone generated by 
(i.e. all the tropical linear combinations of) the rows of $G$ and 
the vectors $\uvector^j$ for all $j\in\setexcept{n}{i}$. 
Then, $b\in \maxplus^n$ belongs to the 
$i$th polar $\ipolar{\RowSpace{G}}$ of $\RowSpace{G}$ if, and only if, 
$(b_i\uvector^i,\oplus_{j\in\setexcept{n}{i}} b_j \uvector^j)\in (\maxplus^n)^2$ 
belongs to the polar $\polar{\sK}$ of $\sK$. Therefore, it follows that
$b$ is an extreme vector of $\ipolar{\RowSpace{G}}$ if, and only if,  
$(b_i\uvector^i,\oplus_{j\in\setexcept{n}{i}} b_j \uvector^j)$ 
is an extreme vector of $\polar{\sK}$.   
Moreover, any non-trivial extreme inequality of $\sK$ is of type $i$. 
In consequence, the enumeration problem for the extreme rays of 
the $i$th polar of $\RowSpace{G}$ reduces to the enumeration problem 
for the extreme rays of the polar of $\sK$. 
\end{proof}

\begin{example} 
Consider the row space $\RowSpace{G}$ of the matrix
\begin{equation}\label{MatrixG}
G=\left( 
\begin{array}{ccc}
-3 & 0  & 0  \\
0  & -3 & 0  \\
0  & 0  & -3 \\
1  & 0  & 0  \\
0  & 1  & 0  \\
0  & 0  & 1 
\end{array}
\right) \enspace . 
\end{equation} 
This tropical polyhedral cone is represented 
on the left hand side of Figure~\ref{FigDeduc2} by the closed region 
in dark gray together with the line segments joining the points $G_{1 \cdot }$, 
$G_{2 \cdot }$ and $G_{3 \cdot }$ to it. This illustration is in barycentric 
coordinates, meaning that a vector $(x_1,x_2,x_3)$ of $\maxplus^3$ 
is represented by the barycenter with weights $(e^{x_1}, e^{x_2}, e^{x_3})$ 
of the three vertices of a simplex. Then, two vectors that are proportional 
in the tropical sense are represented by the same point. This is convenient
to make two-dimensional pictures of tropical convex cones of dimension three.
Moreover, the barycentric representation permits to show
vectors with infinite entries: the latter appear at the 
boundary of the simplex. 

\begin{figure}
\renewcommand{\baryx}{x_1}
\renewcommand{\baryy}{x_2}
\renewcommand{\baryz}{x_3}
\colorlet{mygreen}{gray!50!black}

\begin{center}
\begin{minipage}[b]{0.32\textwidth}
\begin{tikzpicture}%
[scale=0.65,>=triangle 45
,vtx/.style={mygreen},
ray/.style={myred}]
\equilateral{7}{100};

\barycenter{g1}{\expo{-3}}{\expo{0}}{\expo{0}};
\barycenter{g2}{\expo{0}}{\expo{-3}}{\expo{0}};
\barycenter{g3}{\expo{0}}{\expo{0}}{\expo{-3}};
\barycenter{h1}{\expo{1}}{\expo{0}}{\expo{0}};
\barycenter{h2}{\expo{0}}{\expo{1}}{\expo{0}};
\barycenter{h3}{\expo{0}}{\expo{0}}{\expo{1}};
\barycenter{f1}{\expo{-1}}{\expo{0}}{\expo{0}};
\barycenter{f2}{\expo{0}}{\expo{-1}}{\expo{0}};
\barycenter{f3}{\expo{0}}{\expo{0}}{\expo{-1}};

\barycenter{e1}{\expo{0}}{0}{0};
\barycenter{e2}{0}{\expo{0}}{0};
\barycenter{e3}{0}{0}{\expo{0}};

\filldraw[gray,draw=black,opacity=0.9,very thick] (g1) -- (f1) -- (h3) -- (f2) -- (g2) -- (f2) -- (h1) -- (f3) -- (g3) -- (f3) -- (h2) -- (f1) -- cycle;

\filldraw[vtx] (g1) circle (0.75ex) node[below] {$G_{1\cdot }$};
\filldraw[vtx] (g2) circle (0.75ex) node[below] {$G_{2\cdot }$};
\filldraw[vtx] (g3) circle (0.75ex) node[below] {$G_{3\cdot }$};

\filldraw[vtx] (h1) circle (0.75ex) node[below] {$G_{4\cdot }$};
\filldraw[vtx] (h2) circle (0.75ex) node[below] {$G_{5\cdot }$};
\filldraw[vtx] (h3) circle (0.75ex) node[above] {$G_{6\cdot }$};

\end{tikzpicture}
\end{minipage}
\begin{minipage}[b]{0.32\textwidth}
\begin{tikzpicture}%
[scale=0.65,>=triangle 45
,vtx/.style={mygreen},
ray/.style={myred}]
\equilateral{7}{100};

\barycenter{g1}{\expo{-3}}{\expo{0}}{\expo{0}};
\barycenter{g2}{\expo{0}}{\expo{-3}}{\expo{0}};
\barycenter{g3}{\expo{0}}{\expo{0}}{\expo{-3}};
\barycenter{h1}{\expo{1}}{\expo{0}}{\expo{0}};
\barycenter{h2}{\expo{0}}{\expo{1}}{\expo{0}};
\barycenter{h3}{\expo{0}}{\expo{0}}{\expo{1}};
\barycenter{f1}{\expo{-1}}{\expo{0}}{\expo{0}};
\barycenter{f2}{\expo{0}}{\expo{-1}}{\expo{0}};
\barycenter{f3}{\expo{0}}{\expo{0}}{\expo{-1}};

\barycenter{e1}{\expo{0}}{0}{0};
\barycenter{e2}{0}{\expo{0}}{0};
\barycenter{e3}{0}{0}{\expo{0}};

\filldraw[gray,draw=black,opacity=0.9,thick] (g1) -- (f1) -- (h3) -- (f2) -- (g2) -- (f2) -- (h1) -- (f3) -- (g3) -- (f3) -- (h2) -- (f1) -- cycle;

\filldraw[gray,draw=black,opacity=0.6,very thick] (g1) -- (e3) -- (e1) -- (g3) -- (f3) -- (h2) -- (f1) -- cycle;

\end{tikzpicture}
\end{minipage}
\begin{minipage}[b]{0.32\textwidth}
\begin{tikzpicture}%
[scale=0.65,>=triangle 45
,vtx/.style={mygreen},
ray/.style={myred}]
\equilateral{7}{100};

\barycenter{g1}{\expo{-3}}{\expo{0}}{\expo{0}};
\barycenter{g2}{\expo{0}}{\expo{-3}}{\expo{0}};
\barycenter{g3}{\expo{0}}{\expo{0}}{\expo{-3}};
\barycenter{h1}{\expo{1}}{\expo{0}}{\expo{0}};
\barycenter{h2}{\expo{0}}{\expo{1}}{\expo{0}};
\barycenter{h3}{\expo{0}}{\expo{0}}{\expo{1}};
\barycenter{f1}{\expo{-1}}{\expo{0}}{\expo{0}};
\barycenter{f2}{\expo{0}}{\expo{-1}}{\expo{0}};
\barycenter{f3}{\expo{0}}{\expo{0}}{\expo{-1}};
\barycenter{k1}{0}{\expo{3}}{\expo{0}};
\barycenter{k2}{\expo{0}}{\expo{3}}{0};

\barycenter{l1}{\expo{0}}{\expo{1}}{0};
\barycenter{l2}{0}{\expo{1}}{\expo{0}};

\barycenter{i1}{0}{\expo{0}}{\expo{0}};
\barycenter{i3}{\expo{0}}{\expo{0}}{0};

\barycenter{e1}{\expo{0}}{0}{0};
\barycenter{e2}{0}{\expo{0}}{0};
\barycenter{e3}{0}{0}{\expo{0}};

\filldraw[gray,draw=black,opacity=0.9] (g1) -- (f1) -- (h3) -- (f2) -- (g2) -- (f2) -- (h1) -- (f3) -- (g3) -- (f3) -- (h2) -- (f1) -- cycle;

\filldraw[lightgray,draw=black,opacity=0.9,thick] (e1) -- (k1);
\filldraw[lightgray,draw=black,opacity=0.2,very thin] (e1) -- (k1) -- (e3) -- cycle;

\filldraw[lightgray,draw=black,opacity=0.9,thick] (e3) -- (k2);
\filldraw[lightgray,draw=black,opacity=0.2,very thin] (e3) -- (k2) -- (e1) -- cycle;

\filldraw[lightgray,draw=black,opacity=0.9,thick] (i3) -- (f3);
\filldraw[lightgray,draw=black,opacity=0.9,thick] (f3) -- (l2);

\filldraw[lightgray,draw=black,opacity=0.9,thick] (i1) -- (f1);
\filldraw[lightgray,draw=black,opacity=0.9,thick] (f1) -- (l1);
\end{tikzpicture}
\end{minipage}

\end{center}

\caption{Illustration of the proof of Proposition~\ref{CorEquivExtPolars}. 
(Left): The row space $\RowSpace{G}$ of the matrix $G$ in~\eqref{MatrixG}. 
(Middle): The tropical cone $\sK$ obtained by adding to the 
generators of $\RowSpace{G}$ the vectors ${\rm e}^1$ and ${\rm e}^3$ 
of the canonical basis of $\maxplus^3$. 
(Right): The four extreme inequalities of type $2$.}
\label{FigDeduc2}
\end{figure}
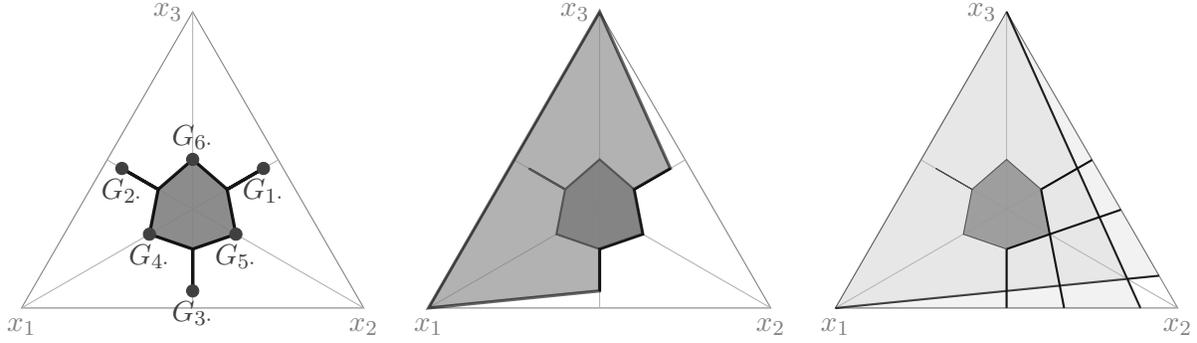
The extreme rays of the $2$nd polar of $\RowSpace{G}$ 
correspond to the following inequalities:
\[
\begin{array}{rlccrl}
 x_1 \oplus (-1)x_3 &\geq (-1)x_2 &&&
 x_1 &\geq (-3)x_2\\
 x_3 &\geq (-3)x_2 &&&
(-1)x_1 \oplus x_3 &\geq (-1)x_2\\
 x_1 &\geq \mpzero &&&
 x_3 &\geq \mpzero
\end{array}
\]
The tropical cones associated with the four extreme inequalities of 
type 2 above are represented on the right hand side of Figure~\ref{FigDeduc2}. 
According to the proof of Proposition~\ref{CorEquivExtPolars}, 
these four non-trivial inequalities precisely define the cone generated by 
the rows of $G$ and the vectors ${\rm e}^1$ and ${\rm e}^3$, 
as illustrated in the middle part of Figure~\ref{FigDeduc2}.
\end{example}

The extreme vectors of tropical polyhedral cones 
can be characterized combinatorially in terms of \NEW{directed} hypergraphs 
(we shall also use undirected hypergraphs in the sequel, 
so we shall always make it clear whether the hypergraph is directed), 
see~\cite{AGG09,AGG10,AllamigeonThesis}. 
Let us recall that a directed hypergraph is a couple $(N,E)$, 
where $N$ is a set of nodes and $E$ is a set of directed hyperedges, 
which are of the form $(M,M')$ with $M,M'\subseteq N$. 
When each of the sets $M$ and $M'$ has only one element, 
we say that the hyperedge is an \NEW{edge}. 
The notion of reachability in directed hypergraphs can be defined recursively: 
node $r$ is said to be \NEW{reachable} from node $h$ if $r=h$ or there exists 
a directed hyperedge $(M,M')\in E$ such that $r\in M'$ and any node in 
$M$ is reachable from $h$. As usual, 
a strongly connected component is a maximal subset of 
nodes $C$ satisfying the property that every node in $C$ is reachable 
from any node in $C$.  

Given a tropical cone $\sK \subseteq \maxplus^n$ defined by $p$ linear inequalities 
$a^r  x \leq b^r x$, where $\{a^r\}_{r\in[p]}$ and $\{b^r\}_{r\in[p]}$ are two
families of vectors of $\maxplus^n$,
the \NEW{tangent directed hypergraph} of $\sK$ at $y\in \maxplus^n$ is defined as the 
directed hypergraph $\mathcal{H}(\sK,y)=(N,E)$ with set of nodes  
$N=\left\{i\in [n]\mid y_i\neq \mpzero \right\}$ 
and set of hyperedges 
\[
E= \left\{(\argmax(b^r y),\argmax(a^r y)) \mid 
r\in [p], a^r y = b^r y \neq \mpzero \right\}\enspace ,
\]
where $\argmax(c x)$ is the argument of the maximum 
$c x = \oplus_{i \in [n]} c_i x_i= \max_{i \in [n]} (c_i + x_i)$ 
for any $c,x\in \maxplus^n$. 

In~\cite{AllamigeonThesis},\cite[Theorem~3.7]{AGG10}, it was shown that a vector $y$ 
of a tropical polyhedral cone $\sK$ is extreme if, and only if, 
the tangent directed hypergraph $\mathcal{H}(\sK , y)$ of $\sK$ at $y$ 
admits a smallest strongly connected component. The term
``smallest'' refers to the order relation in which a strongly connected
component $C_1$ is smaller than a strongly connected component $C_2$
if $C_1$ has a node which is reachable from a node of $C_2$. 
So, this condition means that there is a node which is reachable
for all the other nodes in the tangent directed hypergraph $\mathcal{H}(\sK , y)$. We refer the reader to~\cite{AGG10} and to~\cite{AllamigeonThesis} for details
(note that in the latter reference, the order 
of strongly connected components is reversed).

In the special case of the $i$th polar of $\RowSpace{G}$, 
a simpler characterization of its extreme vectors holds. 
The following theorem shows that a   
vector $b\in \ipolar{\RowSpace{G}}$ with $b_i\neq \mpzero $ 
is extreme if, and only if, the tangent directed hypergraph 
$\mathcal{H}(\ipolar{\RowSpace{G}},b)$ of $\ipolar{\RowSpace{G}}$ at $b$ 
contains a star-like directed sub-hypergraph which is reduced to edges directed to node $i$ and leaving the other nodes, see Figure~\ref{FigureStar}. 
Alternative characterizations are given in Theorem~5 of~\cite{GK09}
and Proposition~5.13 of~\cite{AllamigeonThesis}.

\begin{figure}
\begin{center}
\input{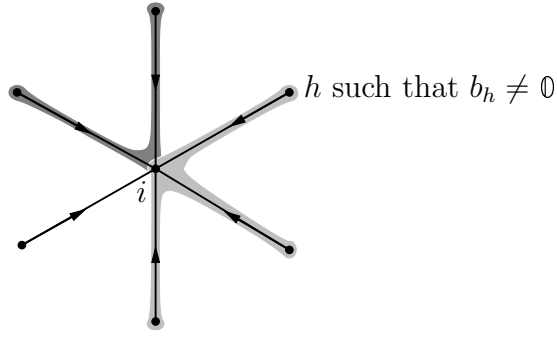}
\end{center}
\caption{Tangent directed hypergraph at an extreme vector $b$ of the $i$th polar of a tropical polyhedral cone. The edges of the star-like directed sub-hypergraph are in black (thin lines). The whole directed hypergraph may contain additional hyperedges pointing to node $i$, here in gray.}
\label{FigureStar}
 \end{figure}

\begin{theorem}[Star-like directed sub-hypergraph]\label{TheoHyperExtremeCharac} Let $b$ be a vector of the $i$th polar of the row space of $G$, i.e.\ $b\in \ipolar{\RowSpace{G}}$, such that $b_i\neq \mpzero$. Then, $b$ belongs to an extreme ray of this $i$th polar if, and only if, the tangent directed hypergraph $\mathcal{H}(\ipolar{\RowSpace{G}},b)$ of $\ipolar{\RowSpace{G}}$ at $b$ 
contains all the edges $(\{h\},\{i\})$ for $h\in \left\{j\in\setexcept{n}{i} \mid b_j\neq \mpzero \right\}$.  
\end{theorem}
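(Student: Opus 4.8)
The plan is to compute the tangent directed hypergraph $\mathcal{H}(\ipolar{\RowSpace{G}},b)$ explicitly and then invoke the characterization of extreme vectors recalled just above, according to which $b$ is extreme precisely when this hypergraph admits a smallest strongly connected component (equivalently, when some node is reachable from all the other nodes of $N$).

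First I would pin down the inequalities used to build the tangent hypergraph. Since $\RowSpace{G}$ is generated by the rows $G_{r\cdot}$ and the inequality $b_i x_i\le\bigoplus_{j\in\setexcept{n}{i}}b_j x_j$ passes to tropical linear combinations, one has
\[
\ipolar{\RowSpace{G}}=\bigl\{b\in\maxplus^n\ \bigm|\ G_{ri}\,b_i\le\bigoplus_{j\in\setexcept{n}{i}}G_{rj}\,b_j,\ r\in[p]\bigr\},
\]
so $\ipolar{\RowSpace{G}}$ is defined by the $p$ inequalities $\alpha^r b\le\beta^r b$ with $\alpha^r:=G_{ri}\uvector^i$ and $\beta^r_j:=G_{rj}$ for $j\in\setexcept{n}{i}$, $\beta^r_i:=\mpzero$. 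Fixing $b$ with $b_i\ne\mpzero$, the node set of $\mathcal{H}(\ipolar{\RowSpace{G}},b)$ is $N=\{j\in[n]\mid b_j\ne\mpzero\}$, and for each $r$ we have $\alpha^r b=G_{ri}b_i$, whence $\argmax(\alpha^r b)=\{i\}$ whenever $G_{ri}b_i\ne\mpzero$, while $\argmax(\beta^r b)$ is a nonempty subset of $\setexcept{n}{i}$ whenever $\beta^r b\ne\mpzero$. Consequently a hyperedge is produced exactly for the rows $r$ with $G_{ri}\ne\mpzero$ and $G_{ri}b_i=\bigoplus_{j\in\setexcept{n}{i}}G_{rj}b_j$, and it has the form $(\argmax(\beta^r b),\{i\})$. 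This gives the two structural facts I will use: \emph{every hyperedge of $\mathcal{H}(\ipolar{\RowSpace{G}},b)$ has head $\{i\}$, and every hyperedge has a nonempty tail contained in $\setexcept{n}{i}$.}

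Next I would reduce the smallest-component criterion to a reachability statement about node $i$. Because every hyperedge has head $\{i\}$, the only nodes reachable from a node $h\in N$ are $h$ itself and possibly $i$; in particular each singleton $\{j\}$, $j\in N$, is a strongly connected component, and no node $v\ne i$ is reachable from any node other than $v$. Since the $i$th column of $G$ is not identically $\mpzero$ and $b_i\ne\mpzero$, there is some $r$ with $G_{ri}b_i\ne\mpzero$, forcing $b_j\ne\mpzero$ for some $j\in\setexcept{n}{i}$, so $\card{N}\ge2$; hence if some node is reachable from all others it can only be $i$. Thus, by the characterization of~\cite{AGG10,AllamigeonThesis}, $b$ is extreme if and only if $i$ is reachable from every $h\in N\cap\setexcept{n}{i}$. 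Finally I would collapse this to single edges: for $h\in N\cap\setexcept{n}{i}$, any hyperedge $(M,\{i\})$ witnessing that $i$ is reachable from $h$ has all nodes of $M$ reachable from $h$, hence $M\subseteq\{h,i\}$; combined with $M\subseteq\setexcept{n}{i}$ and $M\ne\emptyset$ this forces $M=\{h\}$. So $i$ is reachable from $h$ if and only if the edge $(\{h\},\{i\})$ belongs to $\mathcal{H}(\ipolar{\RowSpace{G}},b)$, and putting the two reductions together yields the statement.

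The only genuinely delicate point is the bookkeeping of the first step — verifying that the presentation of $\ipolar{\RowSpace{G}}$ feeding the tangent hypergraph is the one coming from the rows of $G$ and that $\argmax(\alpha^r b)=\{i\}$ — after which the argument is forced by the two structural facts (all hyperedges point to $i$ and avoid $i$ in their tails). As a cross-check, one can argue directly without the hypergraph: for the ``if'' part, write $b=c\oplus d$ with $c,d\in\ipolar{\RowSpace{G}}$ and say $c_i=b_i$; each edge $(\{h\},\{i\})$ comes from a row $r$ with $G_{ri}b_i=G_{rh}b_h$ strictly above all other $G_{rj}b_j$, which forces $c_h=b_h$, so $c=b$. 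For the ``only if'' part, if the edge $(\{h\},\{i\})$ is absent one slightly decreases the $h$-th coordinate of $b$ to get $b'\in\ipolar{\RowSpace{G}}$ with $b=b'\oplus b_h\uvector^h$, a nontrivial decomposition since $\uvector^h\in\ipolar{\RowSpace{G}}$; hence $b$ is not extreme.
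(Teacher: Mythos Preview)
Your proposal is correct, and your cross-check paragraph is essentially the paper's own proof: the paper argues directly from the definition of extremality, decomposing $b=b'\oplus b_h\uvector^h$ when the edge $(\{h\},\{i\})$ is absent, and deriving a contradiction from $b=b'\oplus b''$ when all such edges are present.

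Your primary argument, however, takes a genuinely different route. The paper mentions the general smallest-strongly-connected-component criterion of~\cite{AGG10} only as context, then proves the theorem from scratch; you instead \emph{derive} the theorem from that criterion by analysing the hypergraph structure. Your key structural observation---that every hyperedge of $\mathcal{H}(\ipolar{\RowSpace{G}},b)$ has head $\{i\}$ and tail contained in $\setexcept{n}{i}$---immediately forces all strongly connected components to be singletons and shows that reachability of $i$ from $h$ collapses to the presence of the single edge $(\{h\},\{i\})$. This is an elegant reduction: it explains \emph{why} the star-like picture arises, as a degenerate instance of the general criterion, whereas the paper's direct argument is shorter and self-contained but does not make this connection explicit. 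Both approaches are of comparable length once the bookkeeping is written out; yours has the pedagogical advantage of tying the result to the general theory, the paper's has the advantage of not depending on~\cite{AGG10}.
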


\begin{proof}
In the first place, assume that $\mathcal{H}(\ipolar{\RowSpace{G}},b)$ does not contain the edge 
$(\{h\},\{i\})$ for some $h\in \left\{j\in\setexcept{n}{i} \mid b_j\neq \mpzero \right\}$. 
Then, for each $r\in [p]$ such that $b_i G_{ri}= \oplus_{j\in\setexcept{n}{i}} b_j G_{rj}\neq \mpzero $, 
we have $\argmax(\oplus_{j\in\setexcept{n}{i}} b_j G_{rj}) \neq \{ h \}$. 
Therefore, there exists $\lambda < b_h$ such that $b'\in \ipolar{\RowSpace{G}}$, 
where the vector $b'\in \maxplus^n$ is defined by 
$b'_h=\lambda $ and $b'_j=b_j$ for all $j\in\setexcept{n}{h}$. 
Since $\uvector^h$ also belongs to $\ipolar{\RowSpace{G}}$ 
and $b=b'\oplus b_h \uvector^h$, it follows that $b$ is not extreme. 

Conversely, assume now that $\mathcal{H}(\ipolar{\RowSpace{G}},b)$ contains the edge 
$(\{h\},\{i\})$ for each $h\in\setexcept{n}{i}$ such that $b_h\neq \mpzero $. 
Suppose that $b=b'\oplus b''$, where $b',b''\in \ipolar{\RowSpace{G}}$. 
Then, either we have $b_i=b'_i$ or $b_i=b''_i$. Assume, 
without loss of generality, that $b_i=b'_i$. 
We claim that in this case $b=b'$, which proves that $b$ is extreme. 
By the contrary, suppose that $b\neq b'$. Then, 
there exists $h\in \left\{j\in\setexcept{n}{i} \mid b_j\neq \mpzero \right\}$ 
such that $b'_h < b_h$. Since $\mathcal{H}(\ipolar{\RowSpace{G}},b)$ 
contains the edge $(\{h\},\{i\})$, there exists $r\in [p]$ such that 
$b_i G_{ri}= \oplus_{j\in\setexcept{n}{i}} b_j G_{rj}\neq \mpzero $ and 
$\argmax(\oplus_{j\in\setexcept{n}{i}} b_j G_{rj})= \{ h \}$. 
But as $b'\leq b$, $b'_h < b_h$ and $b'_i=b_i$, it follows that 
$b'_i G_{ri} = b_i G_{ri} = b_h G_{rh}> \oplus_{j\in\setexcept{n}{i}} b'_j G_{rj}$, 
contradicting the fact that $b'\in \ipolar{\RowSpace{G}}$. 
This proves our claim. 
\end{proof}

\section{Covering by level sets and hypergraph transversals}

We begin this section by characterizing the extreme inequalities of a 
tropical polyhedral cone in terms of minimal elements of tropical polyhedra. 
Recall that a vector $z$ of a set $\cZ\subseteq \maxplus^{n-1}$ is a 
\NEW{minimal element} of $\cZ$ if $z'\in \cZ$ and $z'\leq z$ imply $z'=z$. 

\begin{theorem}\label{TheorExtremeMinimal}
A system of representatives of the extreme inequalities
satisfied by the elements of the row space $\RowSpace{G}$ 
consists of the trivial inequalities,
together with the inequalities of type $i$, 
$x_i \leq \oplus_{j\in \setexcept{n}{i}} z_j x_j$ where $i\in [n]$, 
in which the vector of coefficients $z$ is a 
minimal element of the tropical polyhedron
\[
\cZ_i \defi \left\{ z=(z_j)_{j\in \setexcept{n}{i}}\in \maxplus^{n-1}\mid \oplus_{j\in\setexcept{n}{i}}z_j G_{\cdot j}\geq G_{\cdot i} \right\} \enspace .
\]
\end{theorem}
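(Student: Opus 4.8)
The plan is to combine Theorem~\ref{TheoHyperExtremeCharac} with Proposition~\ref{PropTypeI} and the definition of $\ipolar{\RowSpace{G}}$. By Proposition~\ref{PropTypeI}, a system of representatives of the extreme inequalities consists of the trivial inequalities together with, for each $i\in[n]$, the non-trivial extreme inequalities of type $i$; and as already observed after the Definition of the $i$th polar, these correspond exactly to the extreme vectors $b$ of $\ipolar{\RowSpace{G}}$ with $b_i\neq\mpzero$, via $b\leftrightarrow(b_i\uvector^i,\oplus_{j\in\setexcept{n}{i}}b_j\uvector^j)$. Normalizing $b_i=\mpone$ (each such extreme ray has a unique such representative since $b_i\neq\mpzero$), the inequality is $x_i\leq\oplus_{j\in\setexcept{n}{i}}b_j x_j$, so it suffices to prove: \emph{for $b\in\maxplus^n$ with $b_i=\mpone$, the vector $b$ is an extreme vector of $\ipolar{\RowSpace{G}}$ if and only if $z:=(b_j)_{j\in\setexcept{n}{i}}$ is a minimal element of $\cZ_i$.}

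First I would record the translation between membership in $\ipolar{\RowSpace{G}}$ and membership in $\cZ_i$. With $b_i=\mpone$, the condition $b\in\ipolar{\RowSpace{G}}$ reads $G_{ri}\leq\oplus_{j\in\setexcept{n}{i}}b_j G_{rj}$ for all $r\in[p]$, i.e.\ $\oplus_{j\in\setexcept{n}{i}}b_j G_{\cdot j}\geq G_{\cdot i}$, which is exactly $z\in\cZ_i$. Note that the map $b\mapsto z$ (dropping the $i$th coordinate) is order-preserving in both directions between $\{b\in\ipolar{\RowSpace{G}}:b_i=\mpone\}$ and $\cZ_i$, and surjective. So minimality of $z$ in $\cZ_i$ is equivalent to: there is no $b'\in\ipolar{\RowSpace{G}}$ with $b'_i=\mpone$, $b'\leq b$, $b'\neq b$; equivalently, there is no $h\in\setexcept{n}{i}$ with $b_h\neq\mpzero$ and some $b'\in\ipolar{\RowSpace{G}}$ with $b'_i=\mpone$, $b'_h<b_h$, $b'_j=b_j$ for $j\neq h$ (since $\cZ_i$ is "downward closed along directions where coordinates can be lowered", any non-trivial decrease can be realized one coordinate at a time — this uses $\cZ_i$ being an up-set of the form $\{z:Az\geq c\}$, so componentwise minimal decreases suffice).

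Then I would invoke Theorem~\ref{TheoHyperExtremeCharac}, which says $b$ (with $b_i\neq\mpzero$) lies on an extreme ray of $\ipolar{\RowSpace{G}}$ iff $\mathcal{H}(\ipolar{\RowSpace{G}},b)$ contains the edge $(\{h\},\{i\})$ for every $h\in\{j\in\setexcept{n}{i}:b_j\neq\mpzero\}$. Unwinding the definition of the tangent hypergraph for the defining inequalities $G_{ri}x_i\leq\oplus_{j\in\setexcept{n}{i}}b_jG_{rj}x_j$ of $\ipolar{\RowSpace{G}}$: the edge $(\{h\},\{i\})$ is present iff there is $r\in[p]$ with $b_iG_{ri}=\oplus_{j\in\setexcept{n}{i}}b_jG_{rj}\neq\mpzero$ and $\argmax(\oplus_{j\in\setexcept{n}{i}}b_jG_{rj})=\{h\}$ (the "leaving node" is $h$ because that max is attained only at $j=h$, the "entering node" is $i$). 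The key observation — which is precisely the content of the two directions of the proof of Theorem~\ref{TheoHyperExtremeCharac} — is that the edge $(\{h\},\{i\})$ is \emph{absent} iff one can strictly decrease $b_h$ to some $\lambda<b_h$ while keeping the vector in $\ipolar{\RowSpace{G}}$, i.e.\ iff the corresponding one-coordinate decrease of $z$ stays in $\cZ_i$. Combining: all the edges $(\{h\},\{i\})$, $h$ with $b_h\neq\mpzero$, are present iff no such decrease is possible iff $z$ is minimal in $\cZ_i$. This chain of equivalences finishes the proof once the preliminary bookkeeping about trivial inequalities and normalization is in place.

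The main obstacle I anticipate is the careful handling of two edge cases in the normalization step: first, that a non-trivial extreme inequality of type $i$ genuinely has a unique representative with $b_i=\mpone$ (this needs $b_i\neq\mpzero$, guaranteed by the classification in Proposition~\ref{PropTypeI} once we exclude the vectors $\uvector^j$, which give trivial inequalities or repackage as type-$i$ with $b_i=\mpzero$, already covered), and second, verifying that "minimal in $\cZ_i$" matches "cannot decrease one coordinate", i.e.\ that $\cZ_i$ being of the form $\{z:\oplus_j z_jG_{\cdot j}\geq G_{\cdot i}\}$ forces componentwise-local minimality to coincide with global minimality. This last point is where a short argument is needed: if $z'\leq z$, $z'\in\cZ_i$, $z'\neq z$, pick any coordinate $h$ with $z'_h<z_h$ and set $\lambda=z'_h$, $z''_j=z_j$ for $j\neq h$; one must check $z''\in\cZ_i$, which follows from monotonicity of $\oplus_j z_jG_{\cdot j}$ in $z$ together with the fact that lowering other coordinates of $z'$ back up to $z$ only increases the left-hand side — so $z''\geq z'$ componentwise gives $\oplus_j z''_jG_{\cdot j}\geq\oplus_j z'_jG_{\cdot j}\geq G_{\cdot i}$. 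The rest is routine translation between the hypergraph language of Theorem~\ref{TheoHyperExtremeCharac} and the inequality language of $\cZ_i$.
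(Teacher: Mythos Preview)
Your argument is correct, but it takes a different route from the paper's. The paper proves Theorem~\ref{TheorExtremeMinimal} directly from the definition of extremality in the polar $\polar{\RowSpace{G}}$: given a type~$i$ inequality $(a,b)=(\uvector^i,b)$, if $z$ is minimal in $\cZ_i$ and $(a,b)=(a',b')\oplus(a'',b'')$ in $\polar{\RowSpace{G}}$, then one of the summands is again of type~$i$ with a strictly smaller $z'\in\cZ_i$, a contradiction; conversely, if $z$ is not minimal, pick (by compactness) a minimal $z'\leq z$ in $\cZ_i$ and decompose $(a,b)$ as the sum of the type~$i$ inequality for $z'$ and suitable multiples of the trivial inequalities $x_j\geq\mpzero$. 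No hypergraph language is used.

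You instead pass through the $i$th polar and invoke Theorem~\ref{TheoHyperExtremeCharac}, then translate the star-like edge condition into ``no single coordinate of $z$ can be lowered'', and finally show this coincides with minimality in $\cZ_i$ by the monotonicity argument you give. This is a legitimate alternative and has the merit of exhibiting Theorem~\ref{TheorExtremeMinimal} as essentially a reformulation of Theorem~\ref{TheoHyperExtremeCharac}. The cost is some circularity in spirit: the step ``edge $(\{h\},\{i\})$ absent iff $b_h$ can be lowered'' is not the statement of Theorem~\ref{TheoHyperExtremeCharac} but rather extracted from its proof, and that extracted fact is already most of what the paper's direct argument needs. So your route is longer, relies on internals of another proof, and the hypergraph detour does not buy additional insight here; the paper's argument is self-contained and shorter.
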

\begin{proof}
By Proposition~\ref{PropTypeI}, 
we only need to show the characterization for extreme inequalities of type $i$, 
so let us consider an inequality of type $i$,
\begin{align}\label{e-typei}
x_i \leq \bigoplus_{j\in\setexcept{n}{i}} z_j x_j  \enspace,
\end{align}
in the polar cone $\polar{\RowSpace{G}}$ of $\RowSpace{G}$.
Since every row of $G$ must satisfy this inequality,
we have
\[
G_{\cdot i} \leq \bigoplus_{j\in\setexcept{n}{i}} z_j G_{\cdot j} \enspace .
\]
Assume first that $z$ is a minimal element of $\cZ_i$.
Let us write Inequality~\eqref{e-typei} 
in the form $a x\leq b x$.
If this inequality is not extreme in $\polar{\RowSpace{G}}$,
then, we can write $(a,b)=(a',b')\oplus (a'',b'')$, 
where $(a',b')$ and $(a'',b'')$ belong to $\polar{\RowSpace{G}}$ 
and both of them differ from $(a,b)$. 
We deduce from $\uvector^i=a=a'\oplus a''$
that either $a'=\uvector^i$ or $a''=\uvector^i$, say $a'=\uvector^i$. Then, 
the inequality $a' x\leq b' x$ is of type $i$, 
because $b'_i\leq b_i =\mpzero$ and so $b'_i =\mpzero$. Moreover, 
the vector $z'\in \maxplus^{n-1}$ defined by $z_j'=b'_j$ for $j\in\setexcept{n}{i}$ belongs
to $\cZ_i$. Our assumption that $(a,b)\neq (a',b')$ implies that
$z'$ is (strictly) smaller than $z$,
contradicting the minimality of $z$. It follows that Inequality~\eqref{e-typei} is extreme.

Conversely, if $z$ is not minimal, since the set of elements
in $\cZ_i$ smaller than $z$ is compact, there exists a minimal
element $z'\in \cZ_i$ such that $z'\leq z$. By the definition of $\cZ_i$,
the inequality of type $i$ arising from $z'$ belongs to the polar
of $\RowSpace{G}$, and since $z'\leq z$, Inequality~\eqref{e-typei}
can be obtained by summing to the former inequality suitable multiples
of the inequalities $x_j\geq \mpzero $, $j\in\setexcept{n}{i}$. 
Therefore, we conclude that Inequality~\eqref{e-typei} is not extreme.
\end{proof}

For $i\in [n]$, let   
\[
S_i \defi \{k\in [p]\mid G_{k i}\neq \mpzero \} \enspace . 
\] 
Given a scalar $\lambda\in \maxplus $ and $j\in \setexcept{n}{i}$, 
we define the level set
\[
L_j(\lambda) \defi \{ k\in S_i \mid \lambda G_{k j}\geq G_{k i} \} \enspace .
\]
Then, for any $z\in \maxplus^{n-1}$ we have 
\[
z\in \cZ_i \iff 
\bigoplus_{j\in \setexcept{n}{i}} z_j G_{\cdot j}\geq G_{\cdot i} \iff 
S_i \subseteq \bigcup_{j\in\setexcept{n}{i}} L_j(z_j) \enspace .
\]
Since the maps $\lambda \mapsto \lambda G_{k j}$ are non-decreasing, 
it follows that the family of level sets $\{L_j(\lambda)\}_{\lambda\in \maxplus }$ is a chain. 
For each $j\in \setexcept{n}{i}$, it consists of at most $p+1$ sets, 
say $\emptyset = L_j^1\varsubsetneq L_j^2 \cdots \varsubsetneq L_j^{k_j}$ for some $k_j\in [p+1]$. 
Note that for each level set $L_j^{r}$ there exists a 
minimal $\lambda \in \maxplus $ such that $L_j(\lambda ) = L_j^r$, 
which is given by 
\[
w_j^r \defi \max \left\{G_{k j}^{-1}G_{k i} \mid k\in L_j^r \right\} \enspace , 
\] 
where the maximum over the empty set is defined to be $\mpzero $, and 
$G_{k j}^{-1}$ is understood in the tropical sense (i.e., $-G_{kj}$ with
the usual notation).

We shall say that $\{L_j^{r_j}\}_{j\in \setexcept{n}{i}}$, 
where $r_j \in [k_j]$ for $j\in \setexcept{n}{i}$,  
is a minimal cover of $S_i$ if 
$S_i \subseteq \cup_{j\in \setexcept{n}{i}}L_j^{r_j}$
but this inclusion is no longer satisfied if some non-empty set 
$L_j^{r_j}$ is replaced by $L_j^{r_j-1}$. 

The following simple observation gives a combinatorial interpretation
of the extreme rays of the polar in terms of set covers, which we shall
relate to minimal transversals in hypergraphs at the end of this section.

\begin{proposition}\label{prop-corres}
The minimal elements of $\cZ_i$ correspond to the minimal covers of $S_i$ 
by level sets $L_j^{r_j}$ as $j\in \setexcept{n}{i}$ and $r_j \in [k_j]$.
\end{proposition}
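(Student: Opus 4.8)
The plan is to establish the bijection between minimal elements of $\cZ_i$ and minimal covers of $S_i$ by going through the explicit parametrization of each level set $L_j(\lambda)$ by the threshold values $w_j^r$. The key fact to exploit is that, for each fixed $j\in\setexcept{n}{i}$, the map $\lambda\mapsto L_j(\lambda)$ is a non-decreasing, right-continuous step function whose image is the chain $\emptyset = L_j^1\varsubsetneq\cdots\varsubsetneq L_j^{k_j}$, and that $w_j^r$ is precisely the least $\lambda$ with $L_j(\lambda)=L_j^r$. So I would first set up the ``rounding'' map sending a vector $z=(z_j)_{j\in\setexcept{n}{i}}\in\maxplus^{n-1}$ to the tuple of level sets $(L_j(z_j))_{j\in\setexcept{n}{i}}$, and, conversely, the map sending a tuple $(L_j^{r_j})_{j\in\setexcept{n}{i}}$ to the vector $w=(w_j^{r_j})_{j\in\setexcept{n}{i}}$.

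Next I would record the two basic compatibility properties. First, membership is preserved in both directions: by the equivalence displayed just before the proposition, $z\in\cZ_i$ iff $S_i\subseteq\bigcup_j L_j(z_j)$, so $z\in\cZ_i$ iff the associated tuple $(L_j(z_j))_j$ covers $S_i$; and conversely, if $(L_j^{r_j})_j$ covers $S_i$ then $w=(w_j^{r_j})_j$ satisfies $L_j(w_j^{r_j})=L_j^{r_j}$, hence $w\in\cZ_i$. Second, the correspondence is monotone in the right way: $z'\le z$ componentwise implies $L_j(z_j')\subseteq L_j(z_j)$ for all $j$ (since $\lambda\mapsto L_j(\lambda)$ is non-decreasing); and conversely, if $z\in\cZ_i$ then, replacing each $z_j$ by $w_j^{r_j}$ where $L_j^{r_j}=L_j(z_j)$, one gets a vector $w\le z$ with $w\in\cZ_i$ that induces the same tuple of level sets as $z$. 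I would note here that lowering $z_j$ while staying inside the same level set $L_j^{r_j}$ is exactly the operation ``adding a multiple of $x_j\ge\mpzero$'' from the proof of Theorem~\ref{TheorExtremeMinimal}; in particular a minimal element of $\cZ_i$ must have each coordinate equal to the threshold $w_j^{r_j}$ of the level set it induces, so minimal elements of $\cZ_i$ are in bijection with the tuples $(L_j^{r_j})_j$ that arise from them, and it remains only to match up the two minimality notions on such tuples.

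With this dictionary in place, the equivalence ``$z$ minimal in $\cZ_i$ $\iff$ the tuple $(L_j^{r_j})_j$ is a minimal cover of $S_i$'' should follow by unwinding definitions. For the forward direction: if $(L_j^{r_j})_j$ is not a minimal cover, some non-empty $L_j^{r_j}$ can be replaced by $L_j^{r_j-1}$ while still covering $S_i$; replacing $w_j^{r_j}$ by $w_j^{r_j-1}<w_j^{r_j}$ then yields a vector in $\cZ_i$ strictly below $w$, so $w$ is not minimal. For the converse: if $w=(w_j^{r_j})_j$ is not minimal in $\cZ_i$, there is $z'\in\cZ_i$ with $z'\le w$ and $z'\ne w$, so some $z'_j<w_j^{r_j}$; since $w_j^{r_j}$ is the least $\lambda$ with $L_j(\lambda)=L_j^{r_j}$, we get $L_j(z'_j)\subseteq L_j^{r_j-1}\varsubsetneq L_j^{r_j}$, yet $(L_j(z'_\ell))_\ell$ still covers $S_i$; enlarging the other coordinates back up to $w_\ell^{r_\ell}$ if needed only helps the cover, so $S_i\subseteq L_j^{r_j-1}\cup\bigcup_{\ell\ne j}L_\ell^{r_\ell}$, showing the cover was not minimal. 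The only mildly delicate point — the main obstacle, such as it is — is bookkeeping the edge cases: handling $j$ with $r_j=1$ (where $L_j^{r_j}=\emptyset$ and $w_j^{r_j}=\mpzero$, which is why the minimal-cover definition only allows replacing \emph{non-empty} sets), and checking that the rounding $z\mapsto w$ genuinely lands on a minimal element rather than merely a smaller one, which is where the argument from Theorem~\ref{TheorExtremeMinimal} that lowering inside a level set is a legitimate reduction is invoked. Everything else is a routine translation between the poset of vectors and the poset of tuples of level sets.
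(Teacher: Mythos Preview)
Your proposal is correct and follows essentially the same route as the paper's proof: both directions hinge on the fact that $w_j^{r_j}$ is the least $\lambda$ with $L_j(\lambda)=L_j^{r_j}$, so that decreasing a coordinate of a minimal $z$ (which necessarily satisfies $z_j=w_j^{r_j}$) forces a drop to a strictly smaller level set, and conversely replacing a non-empty $L_h^{r_h}$ by $L_h^{r_h-1}$ in a cover corresponds to replacing $w_h^{r_h}$ by the strictly smaller $w_h^{r_h-1}$. Your write-up is more explicit about the dictionary between vectors and tuples of level sets and about the edge case $r_j=1$, but the argument is the same.
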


\begin{proof}
In the first place, assume that $z$ is a minimal element of $\cZ_i$. 
For each $j\in\setexcept{n}{i}$, let $r_j\in [k_j]$ be 
such that $L_j^{r_j}=L_j(z_j)$. Then, 
since $\oplus_{j\in \setexcept{n}{i}} z_j G_{\cdot j}\geq G_{\cdot i}$, 
we have 
\[
S_i \subseteq \bigcup_{j\in\setexcept{n}{i}} L_j(z_j) = \bigcup_{j\in\setexcept{n}{i}} L_j^{r_j} \enspace , 
\]
showing that the family of level sets 
$\{L_j^{r_j}\}_{j\in\setexcept{n}{i}}$  
is a cover of $S_i$. We claim that this cover must be minimal. 
Otherwise, in $\{L_j^{r_j}\}_{j\in\setexcept{n}{i}}$ we could replace some non-empty set $L_h^{r_h}$ by $L_h^{r_h-1}$ 
and still obtain a cover of $S_i$. Therefore, if we replaced the component $z_h$ of $z$ by $w_h^{r_h-1}$, 
we would obtain an element of $\cZ_i$. However, since $z_h \geq w_h^{r_h} > w_h^{r_h-1}$, 
this would contradict the minimality of $z$ 
(indeed, as $z$ is minimal, note that 
we must have $z_j=w_j^{r_j}$ for $j\in \setexcept{n}{i}$), 
proving our claim.   

Conversely, assume that $\{L_j^{r_j}\}_{j\in\setexcept{n}{i}}$ is a minimal cover of $S_i$. 
If we define $z\in \maxplus^{n-1}$ by $z_j=w_j^{r_j}$ for $j\in \setexcept{n}{i}$,   
since $L_j(z_j)=L_j(w_j^{r_j})=L_j^{r_j}$, it follows that   
\[
S_i\subseteq \bigcup_{j\in\setexcept{n}{i}} L_j^{r_j}=\bigcup_{j\in\setexcept{n}{i}} L_j(z_j) \enspace ,
\]
and thus $z\in \cZ_i$. If $z$ was not minimal, 
we could decrease one of its components, say $z_h$, 
so that the resulting vector still belongs to $\cZ_i$. 
This would mean that in $\{L_j^{r_j}\}_{j\in \setexcept{n}{i}}$ 
we could replace the set $L_h^{r_h}$ by $L_h^{r_h-1}$ 
and still obtain a cover of $S_i$. However, 
this contradicts that $\{L_j^{r_j}\}_{j\in \setexcept{n}{i}}$ is a minimal cover of $S_i$.
\end{proof}

We now analyze two examples of tropical cones 
with $p$ generators in dimension $n$.
The first one shows that the growth
of the number of extreme rays of the polar of such a cone cannot be polynomially bounded in $p$ and $n$.

\begin{example}\label{Example1}
Assume that $p$ divides $n-1$.
Then, we can choose the matrix $G$
in such a way that the polar of the cone $\RowSpace{G}$ generated by its rows
has at least
\[
((n-1)/p)^p
\]
extreme rays.

Let $q=(n-1)/p$ and $G=[F_1,\ldots,F_p,e]$,
where $e$ is the $p$ dimensional tropical unit column vector 
(with all entries equal to $\mpone $),
and for $k\in [p]$, $F_k$ is a $p\times q$ matrix 
such that the maximum in each column is attained on
row $k$ and only on this row. 
An example of such a matrix in which $n=7$, $p=3$ and $q=2$ is:
\[
G=\left(\begin{array}{cc|cc|cc|c}
5 & 4 & 1 & 1 & 1 & 1 & 0 \\
1 & 1 & 5 & 4 & 1 & 1 & 0 \\
1 & 1 & 1 & 1 & 5 & 4 & 0
\end{array}\right) \enspace . 
\] 

We obtain a minimal cover of $S_n=[p]$ by level
sets as follows. For each $k\in [p]$, select precisely
one index $j_k$ in the set of column indices of $F_k$.
Set $z_{j_k}=G_{k j_k}^{-1}$. Since the maximum of column $j_k$
is attained only on row $k$, we have $L_{j_k}(z_{j_k})=\{k\}$. 
If we define $z_{j}=\mpzero$ for $j\not \in \{j_k\mid k\in [p]\}$, 
it follows that $\{ L_{j}(z_{j})\}_{j\in [n-1]} $ is a minimal
cover of $S_n$ by level sets. 
By Proposition~\ref{prop-corres}, each of
these minimal covers yield a minimal point of $\cZ_n$, and so,
an extreme vector of the polar of $\RowSpace{G}$. 

Since for each $k\in [p]$ there are $q$ ways to choose $j_k$, 
there is a total number of $q^p=((n-1)/p)^p$ choices. 
Note that each of these choices leads to a different extreme vector. 
\end{example}

\begin{example}[Cyclic polyhedral cone]\label{Example2}
Consider the $p\times n$ matrix $G$ defined by $G_{ij}=t_i^{j-1}$, 
where $t_1<t_2<\cdots <t_p$ are $p$ real numbers.
The classical convex cone generated by the rows of $G$ is the \NEW{cyclic polyhedral cone}. Among all the cones in dimension $n$ generated by $p$ vectors, the latter is known to maximize the number of facets, or equivalently, the number
of extreme rays of its polar. This result is part of the celebrated McMullen upper bound theorem~\cite{mcmullen70}, see~\cite{ziegler98,matousek} for more background.  The cyclic polyhedral cone can be defined in the same way in the tropical case. Thus, the exponentiation is now understood tropically, so that the entries of $G$ are given by $G_{ij}=t_i\times (j-1)$, and the \NEW{tropical cyclic polyhedral cone}, $\cP(p,n)$, is the set of all tropical linear combinations of the rows of $G$. This cone was first considered by Block and Yu in~\cite{blockyu06},
and a dual notion (polars of cyclic polyhedral cones) depending
on a sign pattern was studied in~\cite{AGK09}, see below.  
\end{example}

The number of extreme rays of the classical polar of the cyclic
polyhedral cone is known to be of order $p^{\floor{(n-1)/2}}$ as $p$
tends to infinity, for a fixed $n$, see for example~\cite{matousek}. 
This should be opposed to the tropical case, in which
the number of extreme rays  is polynomially bounded, as
shown by the following proposition.
\begin{proposition}\label{PropExtCyclic} 
The polar of the tropical cyclic polyhedral cone $\cP(p,n)$ is generated by 
\[ 2n+\sum_{i=1}^{n}\left( (p-1)(i-1)(n-i)+n-1 \right) = O\left( pn^3 \right)
\]
extreme vectors.
\end{proposition}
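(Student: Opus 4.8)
The strategy is to count, for each $i \in [n]$, the extreme rays of the $i$th polar $\ipolar{\cP(p,n)}$, and then to assemble these counts via Proposition~\ref{PropTypeI} together with the bookkeeping of the trivial inequalities. By Theorem~\ref{TheorExtremeMinimal} and Proposition~\ref{prop-corres}, the extreme inequalities of type $i$ are in bijection with the minimal covers of $S_i$ by the level sets $L_j^{r_j}$, $j \in \setexcept{n}{i}$. So the heart of the argument is a combinatorial analysis of these level sets for the specific matrix $G_{ij} = t_i (j-1)$, where $t_1 < \dots < t_p$.

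\textbf{Step 1: understand the level sets.} Fix $i$. For $j \neq i$ and $\lambda \in \maxplus$, the level set is $L_j(\lambda) = \{k \in [p] \mid \lambda + t_k(j-1) \ge t_k(i-1)\} = \{k \mid \lambda \ge t_k(i-j)\}$, since $S_i = [p]$ (no column of $G$ is identically $\mpzero$; in fact $G_{ki} = t_k(i-1)$, which is $\mpzero$ only when $i=1$ and we must treat $G_{k1} = t_k \cdot 0 = \mpone$ — so actually $S_i = [p]$ for all $i$, with the degenerate but harmless case $i=1$ where $G_{\cdot 1}$ is the all-$\mpone$ vector). The key point is that $k \mapsto t_k(i-j)$ is \emph{monotone} in $k$: increasing if $j < i$ and decreasing if $j > i$. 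Hence each chain of level sets $\{L_j(\lambda)\}_\lambda$ consists of \emph{prefixes} $\{1,\dots,s\}$ when $j<i$ and of \emph{suffixes} $\{s,\dots,p\}$ when $j>i$; the number of nonempty proper such sets is at most $p-1$ in each case, plus the full set $[p]$, giving $k_j \le p$. This is where the linear-ordering structure of the cyclic cone enters decisively and is the reason the count collapses to a polynomial.

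\textbf{Step 2: count the minimal covers.} A minimal cover of $[p]$ must consist of a family of prefixes (from indices $j < i$) and suffixes (from indices $j > i$) whose union is $[p]$ and which admits no shrinkage. The crucial structural fact is that in any \emph{minimal} cover at most one prefix and at most one suffix can be ``essential'' beyond being the full set — more precisely, a minimal cover of $[p]$ by prefixes and suffixes is determined by: (a) one index $j^- < i$ carrying a prefix $\{1,\dots,s\}$ and one index $j^+ > i$ carrying a suffix $\{s+1,\dots,p\}$ with the split point $s$ ranging over $[p-1]$ (this accounts for a term like $(i-1)(n-i)(p-1)$, the product of the number of choices of $j^- \in \{1,\dots,i-1\}$, of $j^+ \in \{i+1,\dots,n\}$, and of the split point $s$); or (b) a single index $j$ carrying the full set $[p]$ (this happens when $L_j(\lambda) = [p]$ for $\lambda = w_j^{k_j}$, and there are $n-1$ choices of such $j$), together with all components at the other indices being $\mpzero$. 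Summing $(p-1)(i-1)(n-i) + (n-1)$ over $i \in [n]$ gives the sum in the statement; the leading $2n$ accounts for the $n$ trivial inequalities $x_i \ge \mpzero$ and the $n$ trivial inequalities $x_i \le x_i$, which by Proposition~\ref{PropTypeI} are the only remaining extreme inequalities.

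\textbf{Main obstacle.} The delicate part is the claim in Step 2 that minimal covers of $[p]$ by prefixes and suffixes have \emph{exactly} the two forms (a) and (b) and that distinct admissible data give distinct minimal elements of $\cZ_i$ — i.e.\ no overcounting and no undercounting. One must rule out minimal covers using two or more prefixes (a shorter prefix is always redundant against a longer one, so only the ``frontier'' choice matters, but one has to argue that choosing which index $j$ realizes the frontier prefix is genuinely free and that the corresponding $z_j = w_j^{r_j}$ values are forced), and symmetrically for suffixes; and one must handle the boundary contributions carefully (the split point $s$ at the extremes, the case $i=1$ or $i=n$ where one of the two groups of indices is empty, and the degenerate column $G_{\cdot 1}$). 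Once the bijection with these explicitly parametrized covers is nailed down, invoking Proposition~\ref{prop-corres} and Theorem~\ref{TheorExtremeMinimal} turns it into the asserted count, and the $O(pn^3)$ bound is immediate from $\sum_{i=1}^n (p-1)(i-1)(n-i) = O(pn^3)$.
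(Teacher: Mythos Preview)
Your approach is correct and genuinely different from the paper's. The paper invokes external machinery from~\cite{AGK09}: it identifies the $i$th polar $\ipolar{\cP(p,n)}$ with the polar of a \emph{signed} cyclic polyhedral cone (sign pattern all~$+$ except column~$i$, which is~$-$) and then counts its extreme rays by classifying the \emph{tropically allowed lattice paths} for that sign pattern into three shapes --- vertical paths, paths with one horizontal segment, and paths with two horizontal segments on consecutive rows --- yielding the contributions $n-1$, $n-1$, and $(p-1)(i-1)(n-i)$ respectively. Your route, by contrast, stays entirely within the present paper: you use Theorem~\ref{TheorExtremeMinimal} and Proposition~\ref{prop-corres} to reduce to counting minimal covers of $[p]$ by the level sets $L_j(\lambda)$, observe that these are prefixes of $[p]$ for $j<i$ and suffixes for $j>i$, and enumerate the minimal covers directly. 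The two pictures correspond transparently (your split point $s$ is the row at which the lattice path crosses column~$i$; your case~(b) is the one-horizontal-segment case; your case~(a) is the two-segment case), but your argument is more self-contained and nicely illustrates the level-set machinery developed in Section~\ref{SecExtremePolar}, whereas the paper's path description has the advantage of immediately giving the explicit form of each extreme inequality (e.g.\ $t_{m+1}^i t_m^j x_k \oplus t_m^i t_{m+1}^k x_j \geq t_m^j t_{m+1}^k x_i$). One small remark: your ``Main obstacle'' paragraph makes the exclusion of two prefixes sound subtler than it is --- if two nonempty prefixes $\{1,\dots,s_1\}\subseteq\{1,\dots,s_2\}$ occur in a cover, shrinking the smaller one by a single step can never break the cover, so minimality forces at most one nonempty prefix (and symmetrically at most one nonempty suffix); that is the whole argument, and the remaining freedom in choosing \emph{which} index $j^-<i$ (resp.\ $j^+>i$) carries it is exactly what gives the factors $(i-1)$ and $(n-i)$.
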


\begin{proof} 
In order to prove this proposition, in the first place it is necessary 
to recall some concepts and results from~\cite{AGK09}. 

A \NEW{sign pattern} for the cyclic polyhedral cone $\cP(p,n)$ is a 
$p\times n$ matrix $(\epsilon_{ij})$ 
whose entries are $+$ and $-$ signs. 
The polar of the signed cyclic polyhedral cone with sign pattern 
$(\epsilon_{ij})$ is defined (see~\cite[Definition~1]{AGK09}) 
as the set of vectors $x\in \maxplus^n$ such that 
\[
a^r x \leq b^r x \; \makebox{ for all } \; r\in [p] \; ,
\] 
where $a^r_j=G_{rj}$ if $\epsilon_{rj}$ is $-$ 
(resp. $b^r_j=G_{rj}$ if $\epsilon_{rj}$ is $+$) 
and $a^r_j=\mpzero $ otherwise (resp. $b^r_j=\mpzero $ otherwise). 
An \NEW{oriented lattice path} in the sign pattern $(\epsilon_{ij})$ is a  
sequence of entries of $(\epsilon_{ij})$
starting from some top entry $(1,i)$ and ending with some bottom entry $(p,j)$ 
such that the successive entries are always either immediately at the right
or immediately at the bottom of the previous ones. Therefore, 
such a path is composed of vertical segments oriented downward 
(vertical segments are supposed to be composed of at least two entries, 
with exception of the first and last vertical segments which 
are allowed to be composed of only one entry so that every 
paths starts and ends with a vertical segment) 
and horizontal segments oriented to the right 
(which are supposed to be composed of at least two entries). 
An oriented lattice path is said to be \NEW{tropically allowed} for 
the sign pattern $(\epsilon_{ij})$ if the following conditions are satisfied: 

{\em(i)}\/ every sign occurring on the initial vertical segment, 
except possibly the sign at the bottom of the segment, is positive;

{\em(ii)}\/ every sign occurring on the final vertical segment, 
except possibly the sign at the top of the segment, is positive;

{\em(iii)}\/ every sign occurring in any other vertical segment, 
except possibly the signs at the top and bottom of this segment, is positive;

{\em(iv)}\/ for every horizontal segment, 
the pair of signs consisting of the signs of the leftmost and rightmost 
positions of the segment is of the form $(+,-)$ or $(-,+)$; 

{\em(v)}\/ as soon as a pair $(-,+)$ occurs as the pair of 
extreme signs of some horizontal segment, 
the pairs of signs corresponding to all the horizontal segments 
below this one must also be equal to $(-,+)$. 

\begin{figure}
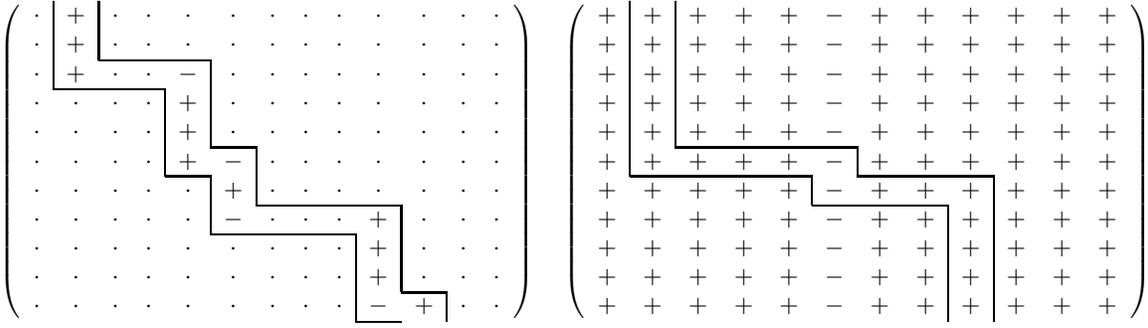

\begin{center}
{\Small
\[
\left( 
\begin{array}{ccccccccccccc}
\cdot&\multicolumn{1}{|c|}{\mop }&\cdot&\cdot&\cdot&\cdot&\cdot&\cdot&\cdot&\cdot&\cdot&\cdot&\cdot \\ 
\cdot&\multicolumn{1}{|c|}{\mop }&\cdot&\cdot&\cdot&\cdot&\cdot&\cdot&\cdot&\cdot&\cdot&\cdot&\cdot \\ \cline{3-5}
\cdot& \multicolumn{1}{|c}{\mop } &\cdot &\cdot &\multicolumn{1}{c|}{\mon}&\cdot&\cdot&\cdot&\cdot&\cdot&\cdot&\cdot&\cdot \\ \cline{2-4}
\cdot&\cdot&\cdot&\cdot&\multicolumn{1}{|c|}{\mop }&\cdot&\cdot&\cdot&\cdot&\cdot&\cdot&\cdot&\cdot \\
\cdot&\cdot&\cdot&\cdot&\multicolumn{1}{|c|}{\mop }&\cdot&\cdot&\cdot&\cdot&\cdot&\cdot&\cdot&\cdot \\ \cline{6-6}
\cdot&\cdot&\cdot&\cdot&\multicolumn{1}{|c}{\mop }&\multicolumn{1}{c|}{\mon }&\cdot&\cdot&\cdot&\cdot&\cdot&\cdot&\cdot  \\ \cline{5-5}
\cdot&\cdot&\cdot&\cdot&\cdot&\multicolumn{1}{|c|}{\mop }&\cdot&\cdot&\cdot&\cdot&\cdot&\cdot&\cdot \\ \cline{7-10}
\cdot&\cdot&\cdot&\cdot&\cdot&\multicolumn{1}{|c}{\mon }&\cdot &\cdot &\cdot &\multicolumn{1}{c|}{\mop }&\cdot&\cdot&\cdot \\ \cline{6-9}
\cdot&\cdot&\cdot&\cdot&\cdot&\cdot&\cdot&\cdot&\cdot&\multicolumn{1}{|c|}{\mop }&\cdot&\cdot&\cdot \\
\cdot&\cdot&\cdot&\cdot&\cdot&\cdot&\cdot&\cdot&\cdot&\multicolumn{1}{|c|}{\mop }&\cdot&\cdot&\cdot \\ \cline{11-11}
\cdot&\cdot&\cdot&\cdot&\cdot&\cdot&\cdot&\cdot&\cdot&\multicolumn{1}{|c}{\mon }&\multicolumn{1}{c|}{\mop }&\cdot&\cdot \\ \cline{10-10}
\end{array}
\right)
\quad 
\left( 
\begin{array}{cccccccccccc}
\mop&\multicolumn{1}{|c|}{\mop }&\mop&\mop&\mop&\mon&\mop&\mop&\mop&\mop&\mop&\mop \\ 
\mop&\multicolumn{1}{|c|}{\mop }&\mop&\mop&\mop&\mon&\mop&\mop&\mop&\mop&\mop&\mop \\ 
\mop&\multicolumn{1}{|c|}{\mop }&\mop&\mop&\mop&\mon&\mop&\mop&\mop&\mop&\mop&\mop \\ 
\mop&\multicolumn{1}{|c|}{\mop }&\mop&\mop&\mop&\mon&\mop&\mop&\mop&\mop&\mop&\mop \\
\mop&\multicolumn{1}{|c|}{\mop }&\mop&\mop&\mop&\mon&\mop&\mop&\mop&\mop&\mop&\mop \\ \cline{3-6}
\mop&\multicolumn{1}{|c}{\mop }&\mop&\mop&\mop&\multicolumn{1}{c|}{\mon }&\mop&\mop&\mop&\mop&\mop&\mop \\ \cline{2-5}\cline{7-9}
\mop&\mop&\mop&\mop&\mop&\multicolumn{1}{|c}{\mon }&\mop&\mop&\multicolumn{1}{c|}{\mop }&\mop&\mop&\mop \\ \cline{6-8}
\mop&\mop&\mop&\mop&\mop&\mon&\mop&\mop&\multicolumn{1}{|c|}{\mop }&\mop&\mop&\mop \\ 
\mop&\mop&\mop&\mop&\mop&\mon&\mop&\mop&\multicolumn{1}{|c|}{\mop }&\mop&\mop&\mop \\
\mop&\mop&\mop&\mop&\mop&\mon&\mop&\mop&\multicolumn{1}{|c|}{\mop }&\mop&\mop&\mop \\
\mop&\mop&\mop&\mop&\mop&\mon&\mop&\mop&\multicolumn{1}{|c|}{\mop }&\mop&\mop&\mop 
\end{array}
\right)
\]}
\end{center}
\caption{Tropically allowed lattice paths. (Left): 
For the given tropically allowed lattice path, 
the signs of the entries indicated by the symbol ``$\cdot $'' are irrelevant. 
(Right): A tropically allowed lattice path with two horizontal segments for 
the sign pattern associated with $i$th polar of $\cP(p,n)$, 
where here $n=12$, $p=11$ and $i=6$.}\label{fig:TropAllPaths}
\end{figure}

Examples of tropically allowed lattice paths are shown 
in Figure~\ref{fig:TropAllPaths}, 
we refer the reader to~\cite{AGK09} for more information.  

The interest in tropically allowed lattice 
paths is given by Theorem~2 of~\cite{AGK09},  
which shows that  the extreme rays of 
the polar of the signed cyclic polyhedral cone 
with sign pattern $(\epsilon_{ij})$ 
are in one to one correspondence with tropically
allowed lattice paths for $(\epsilon_{ij})$. 

Consider now the $i$th polar $\ipolar{\sK}$ of $\cP(p,n)$, 
which consists of the vectors $b\in \maxplus^n$ such that
\[
b_i x_i \leq \bigoplus_{j\in\setexcept{n}{i}} b_j x_j
\]
for all $x\in \cP(p,n)$. By the discussion in Section~\ref{SecExtremePolar},
it follows that the extreme rays of the polar of $\cP(p,n)$, 
with exception of those associated with the trivial inequalities $x_i\geq x_i$, 
correspond to the extreme rays of the cones $\ipolar{\sK}$, $i\in [n]$.
Observe that, if we consider the sign pattern $(\epsilon_{ij})$ all whose 
entries are $+$ signs with exception of column $i$ in which they are $-$ signs, 
then $\ipolar{\sK}$ is the polar of the signed cyclic polyhedral cone 
with sign pattern $(\epsilon_{ij})$. 
Given the structure of this sign pattern, 
its tropically allowed lattice paths 
(which therefore correspond to the extreme inequalities of $\cP(p,n)$ 
associated with the extreme rays of $\ipolar{\sK}$)  
can be classified as follows.

{\em(i)}\/ There are $n-1$ vertical tropically allowed paths,
corresponding to the trivial inequalities $x_j\geq \mpzero $
for $j\in \setexcept{n}{i}$. 

{\em(ii)}\/ There are $n-1$ tropically allowed paths 
with exactly one horizontal segment. 
If the extreme pair of signs of this segment is $(+,-)$, 
then, the $-$ sign must be on the last row, 
and so there are $i-1$ choices for the column containing the $+$ sign 
(i.e. the column containing the first vertical segment).
If the extreme pair of signs of the horizontal segment is $(-,+)$, 
then, the $-$ sign must be on the first row and thus there are $n-i$ 
choices for the column containing the $+$ sign 
(i.e. the column containing the last vertical segment). 
This makes a total of $n-1$ extreme rays, 
corresponding to inequalities of the form 
$t_p^i x_j \geq t_p^ j x_i$ for $j\in [i-1]$ and 
$t_1^i x_j \geq t_1^j x_i$ for $j\in [i+1,n]$,
where for all $r\in [n]$, we set $\interval[r,n]:=\{r,r+1,\dots ,n\}$. 
 
{\em(iii)}\/ The other tropically allowed paths must consist of two horizontal segments
on consecutive rows, which have $(+,-)$ and $(-,+)$
as successive pairs of extreme signs, see Figure~\ref{fig:TropAllPaths} 
for an example. The first row $m$ can take $p-1$ values, and for each of these, we must choose a column $j$ in $[i-1]$ containing 
the first vertical segment of the path and a column $k$ in $[i+1,n]$ 
containing the last vertical segment of the path. 
The associated extreme ray can be shown (see~\cite{AGK09}) 
to correspond to the inequality
\[
t_{m+1}^i t_m^j x_k \oplus t_m^i t_{m+1}^k x_j \geq t_m^j t_{m+1}^k x_i
\enspace .
\]

Thus, for a fixed $i$, we have a total of
\[
n-1+(p-1)(n-i)(i-1) 
\]
extreme inequalities, excluding the trivial inequalities $x_j\geq \mpzero $,
$j\in \setexcept{n}{i}$ (since these inequalities arise several times
for different values of $i$, they must be counted separately). 
Summing the latter quantity over $i\in [n]$, and adding the $2n$ trivial
inequalities $x_i \geq x_i$ and $x_i\geq \mpzero $, $i\in [n]$,
we arrive at the formula given in the proposition.
\end{proof}

\begin{remark}
As mentioned earlier, in classical convex geometry, cyclic polyhedral cones are known to 
maximize the number of extreme rays of the polar, among all
cones with $p$ generators in dimension $n$.
By combining Proposition~\ref{PropExtCyclic} and Example~\ref{Example1}, 
we see that the same is not true in the tropical case. It would
be interesting to find the ``maximizing model'' in this case.
\end{remark}

We now derive some algorithmic consequences of Theorem~\ref{TheorExtremeMinimal}. The extreme rays of the polar of a tropical polyhedral cone can be computed
by the tropical double description method~\cite{AGG10}, see also~\cite{AGG09,AllamigeonThesis} for  more information.
The latter is a general method, which determines the extreme
rays of a tropical polyhedral cone defined as the intersection of tropical half-spaces. This method computes a sequence of intermediate polyhedral cones, given by the intersection of successive half-spaces. Its execution time is polynomial in the size of the input and the maximal number of extreme rays of these intermediate cones. However, there are instances in which this number blows up, so that the execution time can be exponential in the size of the input and the output. Theorem~\ref{TheorExtremeMinimal} will allow
us to exploit known algorithms for variants of a classical problem in 
hypergraph theory: finding all minimal transversals. This leads to
an alternative algorithm to compute the polar, which, by comparison
with the tropical double description method, has the advantage
of running in a time which is quasi-polynomial in the size of the
input and the output. 
However, it should be noted that this alternative algorithm can only be applied to the 
intersection of tropical half-spaces defined by inequalities of the same type $i$ 
(the $i$th polar of a tropical polyhedral cone is 
given by the intersection of such half-spaces), 
while the tropical double description method can handle the intersection 
of tropical half-spaces defined by inequalities of different types. 

Let us recall that, given a (undirected) hypergraph with set of nodes $N$ and 
set of hyperedges $E$ (i.e.\ $E$ is a family of subsets of $N$), 
a \NEW{transversal} or \NEW{hitting set} of this hypergraph is a set $T\subseteq N$ such that $M\cap T\neq \emptyset $ for all $M\in E$. 
A transversal $T$ is minimal if no proper subset of $T$ is also a transversal. 
The \NEW{minimal transversal problem} consists in finding all minimal transversals of a given hypergraph. 
We next show that the minimal covers arising in Proposition~\ref{prop-corres} may be thought of as weighted generalizations of hypergraph transversals. 

Consider a hypergraph with set of nodes
$[n-1]$, and let $E=\{M_1 ,\ldots ,M_p \}$ be its set of hyperedges. 
We associate with this hypergraph the $p\times n$ matrix $G=[F,e]$, 
where $e$ is the $p$ dimensional tropical unit column vector and 
$F$ is the $p\times (n-1)$ matrix defined by: 
$F_{ij}=\mpone $ if $j\in M_i$ and $F_{ij}=\mpzero $ otherwise. 

Then, it can be easily checked that the entries of any minimal 
element of the tropical polyhedron
\[
\left\{z\in \maxplus^{n-1} \mid \oplus_{j\in [n-1]}z_j G_{\cdot j}\geq G_{\cdot n} \right\} = 
\left\{z\in \maxplus^{n-1} \mid F z \geq e \right\}
\]
can only take values in the set $\{ \mpone ,\mpzero \}$ and that 
$z\in \{ \mpone ,\mpzero \}^{n-1}$ is a minimal element of this polyhedron if, and only if, 
the set $\{j\in [n-1] \mid z_j \neq \mpzero \}$ 
is a minimal transversal of the given hypergraph. 

In other words, the rows of $F$ represent the incidence vectors
of hyperedges, and the minimal elements $z$
are the incidence vectors of minimal transversals.

Therefore, by Theorem~\ref{TheorExtremeMinimal}, 
minimal transversals of the given hypergraph correspond to extreme rays of the 
$n$th polar of $\RowSpace{G}$ associated with vectors $b$ such that 
$b_n \neq \mpzero $. 
We summarize this discussion by the following corollary.
\begin{corollary}\label{CorTransversal}
The minimal transversal problem reduces to the computation of 
the extreme rays of the $i$th polar of a tropical cone. \qed
\end{corollary}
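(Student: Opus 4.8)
The plan is to make explicit the reduction sketched in the paragraphs preceding the statement and to justify its two ingredients. Given a hypergraph on the node set $[n-1]$ with hyperedge set $E=\{M_1,\dots,M_p\}$, I would form in polynomial time the $p\times n$ matrix $G=[F,e]$, where $e$ is the $p$-dimensional tropical unit column vector and $F_{ij}=\mpone$ when $j\in M_i$ and $F_{ij}=\mpzero$ otherwise. Since the last column of $G$ equals $e$, the polyhedron $\cZ_n$ of Theorem~\ref{TheorExtremeMinimal} (taken with $i=n$) is exactly $\{z\in\maxplus^{n-1}\mid Fz\geq e\}$; unfolding the max-plus product, $z\in\cZ_n$ means that for every $i\in[p]$ there is a $j\in M_i$ with $z_j\geq\mpone$.

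The only substantive point is to show that every minimal element $z$ of $\cZ_n$ has all its entries in $\{\mpzero,\mpone\}$ and that $z\mapsto T(z):=\{j\in[n-1]\mid z_j=\mpone\}$ then defines a bijection from the minimal elements of $\cZ_n$ onto the minimal transversals of the hypergraph. The key remark is that membership in $\cZ_n$ depends on $z$ only through the truth values of the predicates ``$z_j\geq\mpone$''. Hence, starting from $z\in\cZ_n$: if some coordinate satisfies $z_j>\mpone$, lowering it to $\mpone$ keeps $z$ in $\cZ_n$; and if $\mpzero<z_j<\mpone$, that coordinate witnesses no hyperedge (for each hyperedge containing $j$, membership of $z$ in $\cZ_n$ provides another witness $z_k\geq\mpone$), so lowering $z_j$ to $\mpzero$ again keeps $z$ in $\cZ_n$. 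Thus a minimal $z$ is $\{\mpzero,\mpone\}$-valued. For such a $z$, the condition $z\in\cZ_n$ says precisely that $M_i\cap T(z)\neq\emptyset$ for all $i$, i.e.\ that $T(z)$ is a transversal; and since decreasing a coordinate of $z$ from $\mpone$ to $\mpzero$ amounts to deleting a node from $T(z)$, minimality of $z$ in $\cZ_n$ is equivalent to minimality of $T(z)$ for inclusion. Conversely, the indicator vector of a minimal transversal lies in $\cZ_n$ and must be minimal there, for otherwise the construction above would produce a $\{\mpzero,\mpone\}$-valued element strictly below it, whose support would be a strictly smaller transversal.

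To conclude, I would chain this bijection with Theorem~\ref{TheorExtremeMinimal} and the reduction discussed in Section~\ref{SecExtremePolar}: by Theorem~\ref{TheorExtremeMinimal}, the minimal elements of $\cZ_n$ form a system of representatives of the extreme inequalities of type $n$ of $\RowSpace{G}$; and by the discussion following the definition of the $i$th polar, these extreme inequalities of type $n$ correspond exactly to the extreme rays of the $n$th polar of $\RowSpace{G}$ carried by vectors $b$ with $b_n\neq\mpzero$. Composing the correspondences, the list of extreme rays of the $n$th polar of $\RowSpace{G}$ yields the list of all minimal transversals of the original hypergraph, which is the asserted polynomial-time reduction. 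I do not anticipate any genuine difficulty beyond the bookkeeping in the $\{\mpzero,\mpone\}$-valuedness argument of the second step; everything else is an assembly of facts already established in the excerpt.
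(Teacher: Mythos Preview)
Your proposal is correct and follows essentially the same route as the paper: the construction of $G=[F,e]$, the identification of $\cZ_n$ with $\{z\mid Fz\geq e\}$, the claim that minimal elements of $\cZ_n$ are $\{\mpzero,\mpone\}$-valued and correspond bijectively to minimal transversals, and the invocation of Theorem~\ref{TheorExtremeMinimal} are exactly the steps the paper gives in the discussion preceding the corollary. You have simply supplied the details behind the paper's ``it can be easily checked,'' so there is nothing to add.
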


Fredman and Khachiyan~\cite{Fredman} showed that the minimal transversal problem can be solved in incremental quasi-polynomial time. 
This means that given a set $S$ of already computed minimal transversals, 
the time needed to compute a new minimal transversal or to decide
that there are no more minimal transversals is bounded by
$2^{\makebox{polylog}(m)}$, where $m=k+|S|$ and $k$ is the size
of the input. Boros, Elbassioni, Gurvich, Khachiyan and Makino extended this result in~\cite{BEGKM02} to the case of systems of monotone linear inequalities, and
considered general dualization problems (see also~\cite{KBEG06}).
Elbassioni showed in~\cite[Theorem~1]{Elbassioni08}, as a consequence of~\cite{BEGKM02}, that the minimal
elements of a set of the form 
\[ 
\left\{ x \in \maxplus^n \mid A x \geq b \enspace , \enspace l\leq x \leq u \right\} \enspace , 
\]
where $b$, $l$ and $u$  are vectors of $\maxplus^n$ and 
$A$ is a $p \times n$ matrix with entries in $\maxplus$, can also be computed 
in incremental quasi-polynomial time. (Actually, the setting of~\cite{Elbassioni08} concerns  ``max-times'' inequalities, but the present setting is equivalent.)

Taking $A$ as the matrix whose columns are the columns of $G$ 
with exception of column $i$ and $b$ as the $i$th column of $G$, 
if we define $l_h\defi \mpzero $ and $u_h \defi w_h^{k_h}$ for 
$h\in\setexcept{n}{i}$, we conclude from Elbassioni's theorem 
that the minimal elements of $\cZ_i$ can be computed in 
incremental quasi-polynomial time. 
Combining this remark with Theorem~\ref{TheorExtremeMinimal}, we obtain:

\begin{corollary}
The extreme rays of the polar of a tropical polyhedral cone
can be computed in incremental quasi-polynomial time.\qed
\end{corollary}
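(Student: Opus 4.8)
The plan is to combine Theorem~\ref{TheorExtremeMinimal} with the incremental quasi-polynomial algorithm of Elbassioni for enumerating the minimal solutions of a monotone linear system with bounded variables. First, by Proposition~\ref{PropTypeI} and Theorem~\ref{TheorExtremeMinimal}, a system of representatives of the extreme inequalities of $\RowSpace{G}$ is obtained by listing the $2n$ trivial inequalities $x_i\geq x_i$ and $x_i\geq \mpzero$ for $i\in[n]$ (which is immediate), together with, for each $i\in[n]$, the type-$i$ inequalities $x_i\leq\bigoplus_{j\in\setexcept{n}{i}}z_j x_j$ whose coefficient vector $z$ ranges over the \emph{minimal} elements of $\cZ_i=\{z\in\maxplus^{n-1}\mid \oplus_{j\in\setexcept{n}{i}}z_j G_{\cdot j}\geq G_{\cdot i}\}$. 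So it suffices to enumerate, for each fixed $i$, the minimal elements of $\cZ_i$; running these $n$ subroutines one after another and prepending the trivial inequalities only multiplies the output size and the per-step cost by factors polynomial in the input, hence preserves incremental quasi-polynomiality.

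The second step is to cast $\cZ_i$ in the form handled by \cite[Theorem~1]{Elbassioni08}. Let $A$ be the $p\times(n-1)$ matrix whose columns are the $G_{\cdot j}$ for $j\in\setexcept{n}{i}$, and put $b\defi G_{\cdot i}$, so that $\cZ_i=\{z\mid Az\geq b\}$, a monotone system. To fit the bounded-variable formulation $\{z\mid Az\geq b,\ l\leq z\leq u\}$ I would set $l_h\defi\mpzero$ and $u_h\defi w_h^{k_h}$ for $h\in\setexcept{n}{i}$, with $w_h^{k_h}$ the largest of the thresholds defined before Proposition~\ref{prop-corres} (beyond which increasing $z_h$ no longer enlarges the level set $L_h(z_h)$). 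One then checks that adding these bounds affects neither the minimal elements nor their minimality: every minimal element of $\cZ_i$ trivially satisfies $z\geq\mpzero=l$ and, by Proposition~\ref{prop-corres}, has each component equal to some $w_h^{r_h}$ with $r_h\in[k_h]$, hence $z\leq u$; conversely a minimal element of the bounded set lies in $\cZ_i$, and a strictly smaller $z'\in\cZ_i$ would still satisfy $l\leq z'\leq z\leq u$, contradicting its minimality there. With this, \cite[Theorem~1]{Elbassioni08} enumerates the minimal elements of $\cZ_i$ in incremental quasi-polynomial time. (The ``max-times'' statement of \cite{Elbassioni08} matches the present ``max-plus'' one via the usual exponential change of variables, $\mpzero=-\infty$ corresponding to $0$.)

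The main obstacle is precisely the verification in the previous paragraph that the choice of upper bounds $u$ is both sound (it creates no spurious minimal elements) and complete (it truncates away no genuine one); this is exactly where the combinatorial dictionary of Proposition~\ref{prop-corres}, identifying minimal elements of $\cZ_i$ with minimal covers of $S_i$ by level sets $L_j^{r_j}$, $r_j\in[k_j]$, does the work. Everything else --- turning each output vector $z$ into the inequality $x_i\leq\bigoplus_{j\in\setexcept{n}{i}}z_j x_j$, and interleaving the $n$ runs with the $2n$ trivial inequalities --- is routine. Assembling these pieces yields a system of representatives of all extreme inequalities, equivalently of the extreme rays of $\polar{\RowSpace{G}}$, computed in incremental quasi-polynomial time, as claimed.
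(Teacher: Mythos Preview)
Your proposal is correct and follows essentially the same approach as the paper: reduce via Theorem~\ref{TheorExtremeMinimal} to enumerating the minimal elements of each $\cZ_i$, then apply Elbassioni's result with exactly the same choices $A=(G_{\cdot j})_{j\in\setexcept{n}{i}}$, $b=G_{\cdot i}$, $l_h=\mpzero$, $u_h=w_h^{k_h}$. The paper leaves the soundness and completeness of the bounds $l,u$ implicit, whereas you spell it out using Proposition~\ref{prop-corres}; this extra care is welcome but does not change the argument.
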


\section{A tropical analogue of Farkas lemma involving mean payoff games}\label{SFarkasGame}

The classical Farkas lemma shows that a (homogeneous) 
linear inequality over the reals
can be logically deduced from a finite family of linear inequalities
if, and only if, it can be expressed as a nonnegative linear combination
of the inequalities in this family. As it was observed in~\cite{katz08}, the same is not true in the tropical setting (see Figure~\ref{FigDeduc} below). This raises the question
of deciding whether 
\begin{align}
A x\leq B x\implies c x\leq d x 
\label{e-todecide}
\end{align}
for all $x \in \maxplus^n$, 
where $A,B$ are $p\times n$ matrices and 
$c,d$ are row vectors of dimension $n$, 
all of them with (effective) entries in $\maxplus $. 
Equivalently, given a finite system of (tropical) linear inequalities, 
we may ask whether one of them is redundant. 
In recent works~\cite{AGG,aggut10}, Akian, Gaubert and Guterman showed
that checking whether a tropical polyhedral cone
is trivial (i.e. reduced to the identically $\mpzero $ vector)
reduces to solving a mean payoff game problem.
We next show that the problem of deciding whether
implication~\eqref{e-todecide} holds also reduces to a mean payoff game problem.
We refer the reader to~\cite{gurvich,zwick} for more background
on these games.

In order to perform this reduction to games, it is convenient 
to establish first some simple preliminary properties. 
In many applications, the finite entries of the matrices $A,B$ and
the vectors $c,d$ are integers. Then, it follows
from the next result that the validity of implication~\eqref{e-todecide}
does not change if one considers real or integer variables. 
In the sequel, if $H$ is a subgroup of $(\R,+)$, 
we denote by $H_{\max}$ the semiring $(H\cup\{-\infty\},\max ,+)$.

\begin{proposition}\label{prop-integerisenough}
Assume that $A,B\in H_{\max }^{p\times n}$ and  
$c,d\in H_{\max }^n$, where $H$ is a subgroup of $(\R,+)$. 
Then, the implication $Ax\leq Bx \implies cx\leq dx$ 
holds for all $x\in \maxplus^n$ if, and only if, 
it holds for all $x\in H_{\max }^n$.
\end{proposition}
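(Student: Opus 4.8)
The plan is to prove the non-trivial direction: assuming the implication $Ax\le Bx \implies cx \le dx$ holds for all $x\in H_{\max}^n$, deduce that it holds for all $x\in\maxplus^n$. (The reverse direction is immediate, since $H_{\max}^n\subseteq \maxplus^n$.) First I would reduce to the case $H = \R$ by observing that any $x\in\maxplus^n$ with finite entries can be approximated arbitrarily well in the sup-norm by vectors with entries in $H$, since $H$ is dense in $\R$ (it is a nontrivial subgroup of $(\R,+)$, hence either dense or of the form $\alpha\Z$; but in the latter case one rescales, as the matrices and vectors also have entries in $\alpha\Z$, so the statement for $\alpha\Z$ reduces to the statement for $\Z$, which one proves the same way by a density argument after dividing through — more carefully, I would just handle $H$ directly below).

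The core of the argument is a continuity/closure observation. Suppose $x\in\maxplus^n$ satisfies $Ax\le Bx$; I want $cx\le dx$. Consider the index set $J = \{j : x_j \ne \mpzero\}$ of finite coordinates of $x$. For $\epsilon > 0$ small, perturb $x$ to $x^\epsilon\in H_{\max}^n$ by replacing each finite coordinate $x_j$ by a value $x_j^\epsilon\in H$ with $|x_j^\epsilon - x_j| < \epsilon$, and keeping $x_j^\epsilon = \mpzero$ for $j\notin J$. Every tropical linear form $z\mapsto \bigoplus_i c_i z_i = \max_i(c_i + z_i)$ is $1$-Lipschitz in the sup-norm on the coordinates of the restricted support, so $A x^\epsilon \le B x^\epsilon + \epsilon$ (entrywise, with $\epsilon$ added to each finite entry) — this is \emph{almost} $Ax^\epsilon \le Bx^\epsilon$ but not quite. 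To fix the slack, the trick is to shift: since we are free to choose the perturbation, we choose $x_j^\epsilon \le x_j$ for coordinates $j$ achieving the max on the left-hand side forms and $x_j^\epsilon \ge x_j$ for those achieving the right-hand side — but this cannot be done simultaneously and consistently.

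So instead I would argue via a limiting/compactness argument directly on the hypothesis. Fix $x$ with $Ax\le Bx$. For each integer $m\ge 1$ pick $x^{(m)}\in H_{\max}^n$ with support exactly $J$ and $\|x^{(m)} - x\|_\infty \to 0$. Then $A x^{(m)} \to Ax$ and $Bx^{(m)}\to Bx$ and $cx^{(m)}\to cx$, $dx^{(m)}\to dx$ (restricting all forms to the coordinates in $J$, which is legitimate since outside $J$ both $x$ and $x^{(m)}$ are $\mpzero$; note the relevant submatrices have no identically-$\mpzero$ row issues affecting the limit because $\maxplus$-linear forms are continuous on $\R^J$). The obstacle is that $A x^{(m)} \le Bx^{(m)}$ need not hold for the approximants. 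To get around this, I would not perturb $x$ but rather perturb the \emph{inequality system}: replace $B$ by $B' = B + \delta\mathbbb{1}$ (add $\delta>0$ to every finite entry) where $\delta\in H$; then $Ax \le Bx \le B'x$ with strict-enough slack that, by lower semicontinuity, $Ax^{(m)} \le B'x^{(m)}$ for $m$ large. Since $A, B'\in H_{\max}^{p\times n}$, the hypothesis (applied with $B$ replaced by $B'$ — which still satisfies the hypothesis because enlarging the right side of the premises only makes the implication easier, i.e.\ $\{x : Ax\le B'x\}\supseteq\{x: Ax\le Bx\}$... wait, that's the wrong direction).

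Let me reorganize: the clean route is to keep the system fixed and perturb $x$ with a controlled one-sided error, using that the set $\{x\in\maxplus^n : Ax \le Bx\}$ is a closed tropical cone containing $x$, and that its \emph{$H$-rational points are dense in it}. Concretely, the final statement I would actually establish is: the tropical cone $\sK = \{x: Ax\le Bx\}$ satisfies $\sK = \overline{\sK\cap H_{\max}^n}$, and likewise the cone $\sK' = \{x : cx\le dx\}$ is closed; hence $\sK\subseteq\sK'$ as soon as $\sK\cap H_{\max}^n\subseteq \sK'\cap H_{\max}^n$, which is the hypothesis. The density of $H$-points in a tropical polyhedral cone defined over $H_{\max}$ is the key lemma: it follows because $\sK$ is generated (Minkowski-type theorem, cited as \cite{GK06a,BSS,GK}) by finitely many extreme rays, and one can check that the generators of a solution set of a two-sided system $Ga\oplus Gb = Gb$ over $H_{\max}$ can themselves be taken over $H_{\max}$ (the generators are obtained by rational operations $\oplus,\otimes,(\cdot)^{-1}$ from the entries, which stay in the divisible closure of $H$; if $H$ is not divisible one passes to $H\otimes\Q$ and then approximates — or, more simply, one invokes that such cones are \emph{polyhedral} and hence their closure is the closure of their $H$-points by a direct $\epsilon$-argument on a generating set). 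I expect \textbf{the main obstacle} to be exactly this density statement — making precise that $\sK$ has a generating family over $H_{\max}$ (or at least over a dense subgroup), so that every point of $\sK$ is a limit of $H_{\max}$-combinations of $H_{\max}$-generators. Once that is in hand, continuity of tropical linear forms closes the argument in one line.
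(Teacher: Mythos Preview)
Your proposal eventually arrives at the right key ingredient---that the tropical polyhedral cone $\sK = \{x : Ax \le Bx\}$ is generated by extreme vectors whose finite entries lie in $H$---but the density/closure wrapper you place around it is both unnecessary and, for discrete $H$, actually false.

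For discrete $H$ (say $H=\Z$), the claim $\sK = \overline{\sK \cap H_{\max}^n}$ fails outright: with $\sK = \{x\in\maxplus^2 : x_1\le x_2\}$ (defined over $\Z_{\max}$), the point $(1/2,1)$ lies in $\sK$ but is at sup-distance $1/2$ from every point of $\sK\cap\Z_{\max}^2$. Your rescaling manoeuvre does not rescue this, since it lands you back at $\Z$, where density still fails. The perturbation paragraphs before that are, as you yourself note, dead ends.

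The paper's proof uses the same key ingredient but deploys it directly, with no approximation: once the (suitably normalized) extreme generators $g^r$ of $\sK$ are known to lie in $H_{\max}^n$, one is finished, because a tropical linear inequality is preserved under tropical linear combinations. Explicitly, if $cg^r\le dg^r$ for every $r$, then for any $\lambda_r\in\maxplus$,
\[
c\Big(\bigoplus_r \lambda_r g^r\Big) = \bigoplus_r \lambda_r (c g^r) \le \bigoplus_r \lambda_r (d g^r) = d\Big(\bigoplus_r \lambda_r g^r\Big)\enspace .
\]
So it suffices to check the implication on the extreme generators, and by hypothesis it holds for all of $H_{\max}^n$, hence for them. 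Your concern about the divisible closure of $H$ is also misplaced: the only operations used to produce the extreme generators (in the tropical double description method) are $\max$, $+$, and additive inversion, all of which preserve any subgroup of $(\R,+)$.
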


\begin{proof}
Let $\sK:=\left\{ x\in \maxplus^n \mid  Ax\leq Bx\right\} $.
The tropical analogue of the Minkowski theorem~\cite{GK06a,GK,BSS} shows that
every vector of $\sK$ is a tropical linear combination of vectors
in the extreme rays of $\sK$. Hence, the implication is valid for
all $x\in \maxplus^n$ if, and only if, 
every vector $x$ in an extreme ray of $\sK$ satisfies $cx\leq dx$.
Since all the vectors in a ray are proportional in the tropical sense, 
it suffices to check that $cx\leq dx$ holds for a suitably normalized 
vector of each extreme ray. For instance, we may normalize a vector
of a ray by requiring that $x_j=0$,
where $j$ is the first index in $[n]$ such that $x_j$ is finite.
The normalized vectors of extreme rays will be referred
to as {\em extreme generators}.
Then, the explicit construction of the extreme generators,
in the tropical double description algorithm~\cite{AGG10}, 
shows that every finite entry of the extreme generators
belongs to the subgroup $H$ of $(\R,+)$, because all the operations
performed by the algorithm preserve this subgroup.
Hence, if the implication holds for all $x\in H_{\max }^n$, 
it holds in particular for all
the normalized vectors of the extreme rays of $\sK$, 
and so it holds for all $x\in \maxplus^n$.
\end{proof}

We shall need the following technical proposition.

\begin{proposition}\label{prop-genex}
The finite entries of every vector $y$ in an extreme ray of the tropical
polyhedral cone $\sK \defi \left\{x\in \maxplus^n\mid Ax\leq Bx\right\}$
satisfy:
\begin{align}\label{e-cbound}
|y_j- y_k|\leq \max_{m\leq n-1 ,\;  r_1,\ldots ,r_{m-1}\in[n]}
M_{j r_1}+M_{r_1 r_2}+\cdots +M_{r_{m-1} k} \enspace , 
\end{align}
where
\[
M_{s t}\defi \max_{i\in [p],\; A_{i s},B_{i t}\neq \mpzero } |A_{i s}-B_{i t}|  
\enspace .
\]
In particular, $|y_j- y_k|\leq M\defi (n-1)\max_{s,t\in [n]} M_{s t}$.
\end{proposition}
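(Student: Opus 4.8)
The plan is to analyze the structure of an extreme generator $y$ of $\sK=\{x\in\maxplus^n\mid Ax\leq Bx\}$ by exploiting the tangent directed hypergraph characterization of extreme vectors recalled earlier (the result of \cite{AGG10,AllamigeonThesis}). Write the defining inequalities as $A_{r\cdot}x\leq B_{r\cdot}x$ for $r\in[p]$. For a vector $y$ in an extreme ray, the tangent directed hypergraph $\mathcal{H}(\sK,y)$ on node set $N=\{k\in[n]\mid y_k\neq\mpzero\}$ has a node reachable from every other node. The key quantitative fact is that each saturated inequality, i.e.\ each $r$ with $A_{r\cdot}y=B_{r\cdot}y\neq\mpzero$, gives a hyperedge $(\argmax(B_{r\cdot}y),\argmax(A_{r\cdot}y))$, and if $s\in\argmax(A_{r\cdot}y)$ and $t\in\argmax(B_{r\cdot}y)$ then $A_{rs}+y_s=B_{rt}+y_t$, hence $y_s-y_t=B_{rt}-A_{rs}$, so $|y_s-y_t|\leq M_{st}$ by the definition of $M_{st}$ (note $A_{rs},B_{rt}\neq\mpzero$ since these indices attain finite maxima).

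First I would reduce to the case where $j,k\in N$ (if $y_j$ or $y_k$ is $\mpzero$ there is nothing to prove) and, using the reachability property, fix a node $h$ that is reachable from every node of $N$. Then I would show that for any $k\in N$ there is a bound $y_h-y_k\leq$ (a sum of at most $n-1$ terms $M_{\cdot\cdot}$) obtained by unwinding the recursive definition of reachability: reachability of $h$ from $k$ yields a hyperedge $(M,M')$ with $h\in M'$ and every node of $M$ reachable from $k$; picking $t\in M$ and using the saturated-inequality estimate above gives $y_h-y_t\leq M_{st}$ for an appropriate $s\in M'$ containing $h$, and then recursing on $t$. The induction must be set up carefully so that the recursion has depth at most $n-1$ — this is where one uses that a shortest reachability derivation visits each node at most once, so the telescoping sum $M_{hr_1}+M_{r_1r_2}+\cdots$ has at most $n-1$ summands with distinct intermediate indices $r_1,\ldots,r_{m-1}$. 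Combining the bound $y_h-y_k\leq(\text{such a sum})$ with the analogous bound for $j$, and then comparing via $y_j-y_k=(y_j-y_h)+(y_h-y_k)$, would give \eqref{e-cbound}; the direction $y_k-y_j$ is symmetric, and replacing each $M_{\cdot\cdot}$ by $\max_{s,t}M_{st}$ and bounding the number of terms by $n-1$ yields the crude bound $M=(n-1)\max_{s,t\in[n]}M_{st}$.

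The main obstacle I anticipate is controlling the length of the chain of intermediate indices: a naive induction on reachability could revisit nodes and produce arbitrarily long sums. The fix is to observe that one may always choose a \emph{minimal} reachability certificate — intuitively a shortest derivation — in which the sequence of nodes through which $h$ is derived from $k$ is simple, bounding $m-1$ by $n-1$ and also explaining the constraint $r_1,\ldots,r_{m-1}\in[n]$ appearing in the statement. A secondary technical point is that the inequalities need not be \emph{exactly} saturated at $y$ in the sense needed — but non-saturated inequalities play no role, since only the hyperedges coming from saturated inequalities enter $\mathcal{H}(\sK,y)$, and it is precisely the structure of $\mathcal{H}(\sK,y)$ (having a universally reachable node) that forces enough saturated inequalities to exist to chain the estimate across all of $N$.
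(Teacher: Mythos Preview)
Your approach uses the same two ingredients as the paper's proof: the tangent directed hypergraph characterization of extreme vectors from~\cite{AGG10}, and the observation that each saturated inequality (hyperedge) gives $|y_s-y_t|=|A_{rs}-B_{rt}|\leq M_{st}$ for $s\in\argmax(A_{r\cdot}y)$ and $t\in\argmax(B_{r\cdot}y)$. The difference is in how the two proofs chain this estimate across $N$.

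The paper does not route through the universally reachable node $h$. Instead, it introduces the undirected graph $\mathcal{G}(\sK,y)$ on $N$ whose edges are the pairs $(t,s)$ arising from hyperedges as above, observes that the existence of a smallest strongly connected component in $\mathcal{H}(\sK,y)$ forces $\mathcal{G}(\sK,y)$ to be connected, and then takes a shortest undirected path directly from $j$ to $k$. Such a path has length at most $|N|-1\leq n-1$, which yields~\eqref{e-cbound} immediately.

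Your plan to bound $|y_h-y_j|$ and $|y_h-y_k|$ separately by sums of at most $n-1$ terms each, and then write $y_j-y_k=(y_j-y_h)+(y_h-y_k)$, produces a sum of up to $2(n-1)$ terms, not $n-1$; as written it yields only $|y_j-y_k|\leq 2M$ rather than the stated bound $M=(n-1)\max_{s,t}M_{st}$, and it does not give~\eqref{e-cbound} in the form claimed. The fix is exactly the paper's shortcut: the two reachability chains you build are walks in the same undirected graph $\mathcal{G}(\sK,y)$, their concatenation is a walk from $j$ to $k$, and extracting a simple path from that walk brings the number of summands back down to at most $n-1$. Once you notice this, the detour through $h$ (and the careful induction on hypergraph reachability depth that worried you) becomes unnecessary: connectivity of $\mathcal{G}(\sK,y)$ is all you need.
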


\begin{proof}
To bound $|y_j-y_k|$, we shall use the characterization 
of the extreme vectors of $\sK$ in terms of 
tangent directed hypergraphs established in~\cite[Theorem~3.7]{AGG10}, 
which was already recalled in Section~\ref{SecExtremePolar}. 
The tangent directed hypergraph of $\sK$ at 
$y\in \maxplus^n$ is the directed hypergraph $\mathcal{H}(\sK,y)=(N,E)$ 
with set of nodes  
$N=\left\{i\in [n]\mid y_i\neq \mpzero \right\}$ 
and set of hyperedges 
\[
E= \left\{ (\{j\in[n]\mid B_{rj}y_j =B_{r\cdot}y\},
\{j\in[n]\mid A_{r j}y_j =A_{r\cdot}y\}) \mid 
r\in [p], A_{r\cdot} y = B_{r\cdot} y \neq \mpzero \right\} \enspace . 
\]  
Here, we shall also use an undirected graph, denoted by $\mathcal{G}(\sK,y)$,
with the same set of nodes $N$ and with an edge connecting nodes $j$ and $k$ 
if there exists an hyperedge $(M,M')\in E$ such that $j\in M$ and $k\in M'$. 
In other words, the edge $(j,k)$ belongs to $\mathcal{G}(\sK,y)$ if, 
and only if, $A_{rk}y_k = A_{r\cdot} y = B_{r\cdot} y=B_{rj}y_j\neq \mpzero $ 
for some $r\in [p]$.  
Recall that the result of~\cite{AGG10} shows that $y\in \sK$ belongs
to an extreme ray of $\sK$ if, and only if, $\mathcal{H}(\sK,y)$ 
has a smallest strongly connected component. 
It follows that in particular the underlying undirected graph 
$\mathcal{G}(\sK,y)$ must be connected. 
Note that for any edge $(j,k)$ of $\mathcal G(\sK,y)$ we have 
(with the usual notation) $A_{r k}+ y_k=B_{r j}+ y_j$ 
for some $r\in [p]$, and so $|y_j- y_k|\leq M_{k j}$. 
Consider now any two nodes $j,k$ of $N$. 
Since $\mathcal{G}(\sK,y)$ is connected, 
it must contain an undirected path $j,r_1,\ldots,r_{m-1},k$
of length $m\leq |N|-1 \leq n-1$ connecting these two nodes,
which shows~\eqref{e-cbound}. 
\end{proof}

The following immediate corollary shows that when 
implication~\eqref{e-todecide} does not hold, 
we can construct a counter example by assigning
to the variables values which are not too large.

\begin{corollary}\label{coro-smallcex}
If the implication $Ax\leq Bx\implies cx\leq dx$ does not hold, 
there is a vector $y\in \maxplus^n$ that satisfies~\eqref{e-cbound}
such that $A y\leq B y$ and $c y >d y$ (a counter example).
\end{corollary}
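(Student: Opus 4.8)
The plan is to derive Corollary~\ref{coro-smallcex} as a direct consequence of Proposition~\ref{prop-genex} together with the tropical Minkowski theorem, exactly as in the proof of Proposition~\ref{prop-integerisenough}. Suppose the implication $Ax\leq Bx\implies cx\leq dx$ does not hold. Then there exists some $x\in\maxplus^n$ with $Ax\leq Bx$ but $cx>dx$; in other words, the cone $\sK\defi\{x\in\maxplus^n\mid Ax\leq Bx\}$ contains a vector $x$ violating $cx\leq dx$.

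The key step is then to reduce to an extreme generator. By the tropical analogue of the Minkowski theorem~\cite{GK06a,GK,BSS}, $x$ is a tropical linear combination $x=\oplus_{r}\lambda_r y^r$ of vectors $y^r$ lying in extreme rays of $\sK$. Since $cx=\oplus_r\lambda_r(cy^r)$ and $dx=\oplus_r\lambda_r(dy^r)$, the strict inequality $cx>dx$ forces $\lambda_r(cy^r)>\lambda_r(dy^r)$ — hence $cy^r>dy^r$ — for at least one index $r$ with $\lambda_r\neq\mpzero$. Thus there is a single vector $y\defi y^r$ in an extreme ray of $\sK$ with $Ay\leq By$ (as $y\in\sK$) and $cy>dy$. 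Multiplying $y$ by a scalar does not affect these (in)equalities, so we may normalize $y$, say by requiring $y_j=\mpone$ for the first index $j$ with $y_j$ finite.

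Finally I would invoke Proposition~\ref{prop-genex}, which asserts that the finite entries of any vector in an extreme ray of $\sK$ satisfy the bound~\eqref{e-cbound} on all pairwise differences $|y_j-y_k|$. Since this bound is scale-invariant (it only constrains differences of finite entries), the normalized $y$ still satisfies it. This produces the desired counter example $y$ satisfying~\eqref{e-cbound}, $Ay\leq By$, and $cy>dy$, completing the proof.

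There is no real obstacle here: the statement is labelled ``immediate corollary'' and the entire content is the observation that a violating vector can be taken to be an extreme generator, to which the already-established bound~\eqref{e-cbound} applies. The only mild subtlety to spell out is why the violation $cx>dx$ is inherited by one of the extreme generators appearing in the decomposition of $x$ — this is the elementary max-plus computation with $cx=\bigoplus_r\lambda_r(cy^r)$ indicated above.
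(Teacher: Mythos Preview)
Your proof is correct and follows essentially the same route as the paper: pass to an extreme generator of $\sK=\{x\mid Ax\leq Bx\}$ violating $cx\leq dx$, then invoke Proposition~\ref{prop-genex}. You spell out the max-plus computation showing that the violation descends to some extreme generator (pick $r$ attaining $cx=\bigoplus_r\lambda_r(cy^r)$), whereas the paper leaves this implicit by referring back to the argument of Proposition~\ref{prop-integerisenough}; the normalization step you mention is harmless but unnecessary, since~\eqref{e-cbound} only constrains differences of finite entries and is already stated for arbitrary vectors in an extreme ray.
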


\begin{proof}
If implication~\eqref{e-todecide} does not hold, 
there is at least one extreme generator $y$ of the tropical polyhedral cone 
$\left\{x\in \maxplus^n\mid Ax\leq Bx\right\}$ 
such that $c y > d y$, 
and so~\eqref{e-cbound} is valid for $y$.
\end{proof}

\begin{remark}
The previous corollary is related to the ``small model property''
established by Bezem, Nieuwenhuis, and Rodr\'iguez-Carbonell
for the ``max-atom'' problem. Lemma~1 of~\cite{BezemNieuwenhuisCarbonnell10} 
(see also~\cite{BezemNieuwenhuisCarbonnell08}) deals with 
a system of inequalities of the form $x_i\leq k_{i,r,s}+\max(x_r,x_s)$,
for all $(i,r,s) \in V$, where the set $V$ is given, and every 
coefficient $k_{i,r,s}$ is a given integer. They show that if
this system has a finite integer solution, then, it also has
a finite integer solution $y$ such that $|y_i-y_j|\leq \sum_{(i,r,s)\in V}
|k_{i,r,s}|$.
\end{remark}

We now present the reduction to games.
Given a scalar $\lambda\in\R$, we shall consider the system of inequalities 
$A x\leq B x$, $\lambda d x \leq c x$. Following~\cite{AGG}, with this system
we associate a mean payoff game 
in which there are two players, the maximizer ``Max'' and the minimizer ``Min''. 
This game can be represented by a bipartite digraph $\sG_\lambda$ 
with two classes of nodes: the {\em row} nodes $[p+1]$ and the {\em column} nodes $[n]$. 
For $i\in [p]$ and $j\in [n]$, we draw an arc with
weight $B_{ij}$ from row node $i$ to column node $j$ if $B_{ij}\in \R$, 
and we draw an arc with weight $-A_{ij}$ from column node $j$ to row node $i$
if $A_{ij}\in \R$. Similarly, we draw an arc from row node $p+1$ to
column node $j$ with weight $c_j$ if $c_j \in \R$, 
and we draw an arc from column node $j$ to row node $p+1$ with weight $-\lambda-d_j$ if $d_j\in \R$. This is illustrated in Example~\ref{ex-deduce} and Figure~\ref{BipartiteDigraph} (Left) below. 

The mean payoff game associated with this bipartite 
digraph consists of moving a token along its edges. 
When the token is at row node $i$, 
Player Max must select an arc leaving $i$ and 
receives the weight of the arc as a payment from Player Min.
When the token is at column node $j$, Player Min
must select an arc leaving $j$, and pays to Player Max the weight of this arc. 
(We warn the reader that an opposite convention of sign is used in~\cite{AGG}: 
it is assumed there that at each step, the player who makes the moves receives 
the amount indicated on the arc, whereas here, Player Max receives this amount, 
even when Player Min makes the move.)

We shall need the following simple assumption, which guarantees
that each player has at least one action available in every node.

\begin{assumption}\label{assump}
For all $j\in [n]$, $d_j\in \R$ or 
there exists $i\in [p]$ such that $A_{ij}\in \R$. 
There exists $j\in[n]$ such that $c_j\in \R$ and, for all $i\in [p]$, 
there exists $j\in [n]$ such that $B_{ij}\in \R$.
\end{assumption}

Observe that, since we are interested in studying 
the implication $A x\leq B x\implies c x\leq d x$, 
we may always assume that the conditions of Assumption~\ref{assump} hold. 
Indeed, by adding to the $p$ inequalities $Ax\leq Bx$ the 
$n$ trivial inequalities $x_j\leq x_j$, $j\in [n]$, 
we obtain an equivalent implication in which 
the first condition of Assumption~\ref{assump} is satisfied. 
If for some $i\in [p]$ we have $B_{ij}= \mpzero $ for all $j\in [n]$, 
then $A x\leq B x$ implies $x_j=\mpzero$ 
for all $j\in [n]$ such that $A_{ij}\neq \mpzero$. 
Therefore, by eliminating the $i$th inequality and the variables 
$x_j$ for which $A_{ij}\neq \mpzero$, 
we obtain a new implication which is equivalent to the original one. 
By repeating this elimination procedure a finite number of times, 
we eventually arrive at an equivalent implication 
(involving a subset of variables) which satisfies 
the last condition of Assumption~\ref{assump}. 
Finally, observe that we may always assume that $c$ is not the identically 
$\mpzero $ vector because otherwise the implication trivially holds.  
Hence, in the sequel, 
{\em we shall always require the matrices to satisfy Assumption~\ref{assump}} 
without stating it explicitly.  

The dynamic programming operator $g_\lambda $ of the game described above is 
the self-map of $\R^n$ given by
\[
[g_\lambda(x)]_j \defi 
\min\Big(\min_{i\in[p],A_{ij}\in \R}\big(-A_{ij}+\max_{k\in[n]}(B_{ik}+x_k)\big),
-\lambda -d_j+\max_{k\in[n]}(c_k+x_k)\Big)  
\]
if $d_j\in \R$,  
\[ 
[g_\lambda(x)]_j \defi 
\min_{i\in[p],A_{ij}\in \R}\big(-A_{ij}+\max_{k\in[n]}(B_{ik}+x_k)\big)  
\]
otherwise. Observe that in this section, for more readability, 
we come back to the usual notation (instead of the tropical one)
when dealing with dynamic programming operators of games. 
The fact that $g_\lambda$ preserves $\R^n$ follows readily from Assumption~\ref{assump}, 
which implies that the maxima and minima appearing in the previous expressions
only take finite values when $x\in\R^n$ and $\lambda\in\R$. Note also
that $g_\lambda$ has a unique continuous extension to $\maxplus^n$, 
that we will denote by the same symbol $g_\lambda$ ($\maxplus^n$ is equipped 
with the product topology which arises when considering the metric 
$(x,y)\mapsto |e^x-e^y|$ on $\maxplus$). Actually, 
the meaning of the previous expressions giving $[g_\lambda(x)]_j$ 
is unambiguous, even when $x\in \maxplus^n$, and this determines the extension.

Observe that $g_\lambda$ satisfies 
$x\leq y \implies g_\lambda(x)\leq g_\lambda(y)$, i.e.\ $g_\lambda$ is order preserving. 
Moreover, for any scalar $\mu \in \maxplus $ and $x\in \maxplus^n$, 
we have $g_\lambda(\mu +x)=\mu +g_\lambda(x)$, 
so we shall say that $g_\lambda$ commutes with the (usual) addition of a scalar. 

We denote by $\rho(f)$ the (non-linear) \NEW{spectral radius} of a continuous order
preserving self-map $f$ of $\maxplus^n$ that commutes
with the addition of a scalar. 
Recall that $\rho(f)$ is defined as the maximal scalar $\mu $ 
for which there exists a non-identically $\mpzero $ vector $x\in \maxplus^n$ 
(non-linear \NEW{eigenvector})
such that $f(x)=\mu +x$. In other words, 
$\rho(f)$ is the maximal ``additive eigenvalue'' of $f$.
We refer the reader to~\cite{AGG} for more background. 

Since the map $g_\lambda$ preserves $\R^n$, is piecewise affine
and sup-norm nonexpansive, the following limit,
called \NEW{cycle time}, is known to exist~\cite{kohlberg}:
\begin{align}\label{Def-cycletime}
\chi(g_\lambda) \defi 
\lim_{k\to \infty}\frac{g_\lambda^k(0)}{k} \enspace.
\end{align}
Here, $g_\lambda^k$ denotes the $k$th iterate of $g_\lambda$ and  
$0$ is the $n$ dimensional zero vector.
Kohlberg actually shows a stronger result, that there is an invariant half-line
on which $g_\lambda$ acts by translation. It follows
easily from this result that the $j$th entry of $\chi(g_\lambda)$, 
denoted by $\chi_j(g_\lambda)$, 
coincides with the value of the game when the initial state is column node 
$j$ and the payoff of an infinite run is defined as the average payment per 
turn made by Player Min, as in~\cite{liggettlippman}. 
(The equivalence is detailed in~\cite{AGG}.) 

A Collatz-Wielandt type formula (\cite[Lemma~2.8]{AGG}, 
see also~\cite{sgjg04}) shows that
\begin{align}\label{e-weakcw}
\rho(g_\lambda)=\overline{\chi}(g_\lambda) \defi 
\lim_{k\to \infty}\frac{1}{k}\max_{j\in [n]}(g_\lambda^k(0))_j  \enspace ,
\end{align}
and so
\begin{align}\label{e-rho-isamax}
 \rho(g_\lambda)
= \max_{j\in[n]}\chi_j(g_\lambda) 
\end{align}
can be interpreted as the value
of an associated mean payoff game in which Player Max is allowed to select
the initial state $j$, see Proposition~2.11 of~\cite{AGG} for details.
We shall refer to $\rho(g_\lambda)$ as
the \NEW{mean payoff} (value) of the latter game.

The introduction of these mean payoff games 
is motivated by the following propositions.

\begin{proposition}\label{prop-rhoS}
The system of inequalities $A x\leq B x$ does not imply 
the scalar inequality $c x \leq d x$ if, 
and only if, $\max_{j\in[n],c_j\neq \mpzero} \chi_j(g_\lambda) \geq 0$ 
for some $\lambda > 0$.
\end{proposition}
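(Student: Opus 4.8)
The plan is to relate the failure of the implication $Ax\le Bx\implies cx\le dx$ to the existence of a suitably normalized counterexample, and then to read off this existence from the sign of a cycle time / spectral radius of $g_\lambda$. The key link is the correspondence established in \cite{AGG} between the nonemptiness of a tropical polyhedron defined by a two-sided system and the nonnegativity of the value of the associated mean payoff game. Concretely, the system $Ax\le Bx$, $\lambda dx\le cx$ has a solution $x$ with $cx\ne\mpzero$ (equivalently with some $x_k$ finite on $\argmax(cx)$) precisely when the game $\sG_\lambda$ has nonnegative value from some column node $j$ with $c_j\ne\mpzero$; and this latter condition is exactly $\max_{j\in[n],\,c_j\ne\mpzero}\chi_j(g_\lambda)\ge 0$.

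First I would establish the ``only if'' direction. Suppose the implication fails. By Corollary~\ref{coro-smallcex} there is a vector $y\in\maxplus^n$ with $Ay\le By$ and $cy>dy$; in particular $cy\ne\mpzero$. Since $cy>dy$ and both are of the form $\oplus_k(\cdot)_k+y_k$, pick $\lambda>0$ small enough that $\lambda(dy)\le cy$ still holds; this is possible because $cy-dy>0$ is a fixed finite gap (or $dy=\mpzero$, in which case any $\lambda$ works and $y$ trivially witnesses the system). Then $y$ solves $Ax\le Bx$, $\lambda dx\le cx$ and has $cy\ne\mpzero$. I would then invoke the game-theoretic reformulation from \cite{AGG}: the existence of such a $y$ forces the value of $\sG_\lambda$, started at a column node $j$ lying in $\argmax(cy)$ (so $c_j\ne\mpzero$), to be $\ge 0$, i.e.\ $\chi_j(g_\lambda)\ge 0$. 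Hence $\max_{j\in[n],\,c_j\ne\mpzero}\chi_j(g_\lambda)\ge 0$ for this $\lambda>0$.

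Conversely, suppose $\max_{j\in[n],\,c_j\ne\mpzero}\chi_j(g_\lambda)\ge 0$ for some $\lambda>0$. By Kohlberg's invariant half-line theorem, $\chi_j(g_\lambda)$ is the value of the mean payoff game from column node $j$; nonnegativity of this value yields, via the correspondence of \cite{AGG} between winning strategies of Player Min (or the feasibility side) and solutions of the two-sided system, a vector $x\in\maxplus^n$ satisfying $Ax\le Bx$ and $\lambda dx\le cx$, with $x_j$ finite so that $cx\ne\mpzero$. Since $\lambda>0$ we get $dx\le\lambda dx\oplus\mpzero\le\dots$—more precisely $\lambda+dx\le cx$ with $\lambda>0$ gives $dx< cx$ whenever $dx$ is finite, and if $dx=\mpzero$ then trivially $dx\le cx<\infty$ fails to be an equality with $cx$; either way $cx>dx$. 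Thus $x$ is a counterexample to the implication, so the implication fails.

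The main obstacle, and the place where care is needed, is the precise invocation of the results of \cite{AGG}: one must match the sign conventions (the excerpt explicitly warns that \cite{AGG} uses the opposite payoff sign convention), make sure the game $\sG_\lambda$ as drawn here corresponds to the system $Ax\le Bx$, $\lambda dx\le cx$ under that translation, and check that ``value $\ge 0$ from a specific column node $j$'' translates to a solution with the corresponding coordinate finite (which is what forces $c_j\ne\mpzero$ into the statement). The role of the strict inequality $\lambda>0$ (rather than $\lambda\ge 0$) is exactly to convert $\lambda dx\le cx$ into the strict $dx< cx$ needed for a genuine counterexample, and conversely to have room to perturb $\lambda$ downward in the first direction; this bookkeeping, together with the degenerate cases $dy=\mpzero$ or $cx=\mpzero$, is the only genuinely delicate part, the rest being a direct application of the cycle-time/eigenvalue machinery recalled above.
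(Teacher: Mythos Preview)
Your proposal is correct and follows essentially the same route as the paper: both reduce the failure of the implication to the existence of some $\lambda>0$ for which the cone $\sK_\lambda=\{x\mid Ax\le Bx,\ \lambda dx\le cx\}$ contains a vector with $cx\ne\mpzero$ (equivalently, $x_j\ne\mpzero$ for some $j$ with $c_j\ne\mpzero$), and then invoke the coordinate-wise correspondence from~\cite{AGG} (Theorem~3.2 there) stating that $\sK_\lambda$ contains a vector with $x_j\ne\mpzero$ if and only if $\chi_j(g_\lambda)\ge 0$. The paper's proof is simply more terse; your appeal to Corollary~\ref{coro-smallcex} in the forward direction is harmless but unnecessary, and note that it is Player~Max (not Min) whose winning from node $j$ corresponds to $\chi_j(g_\lambda)\ge 0$.
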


\begin{proof}
There exists a vector $y$ such that $A y\leq B y$ and $c y>d y$ if, and only 
if, there exists a scalar $\lambda>0$ such that the tropical polyhedral cone
\begin{equation}\label{Klambda}
\sK_\lambda\defi \left\{x\in \maxplus^n \mid  A x\leq B x ,\enspace \lambda d x\leq c x \right\} 
\end{equation}
contains a vector $x$ for which $c x\neq \mpzero$, i.e. 
$\sK_\lambda$ contains a vector $x$ satisfying $x_j\neq \mpzero $ 
for some $j\in [n]$ such that $c_j\neq \mpzero $.
Then, the proposition follows from Theorem~3.2 of~\cite{AGG}, 
which shows that the tropical polyhedral cone $\sK_\lambda$
contains a vector $x$ such that $x_j\neq \mpzero$ if, 
and only if, $\chi_j(g_\lambda) \geq 0$, 
i.e. column node $j$ is a winning initial state for Player Max 
in the mean payoff game associated with 
the system of inequalities $A x\leq B x$, $\lambda d x \leq c x$. 
\end{proof}

The next result considers the case of vectors with only finite entries. 

\begin{proposition}\label{prop-impl-finite}
The implication $A x\leq B x \implies c x \leq d x$ 
does not hold for all $x\in \R^n$ if, and only if, 
$\min_{j\in[n]} \chi_j(g_\lambda) \geq 0$ for some $\lambda > 0$.
\end{proposition}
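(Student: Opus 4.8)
The plan is to follow the proof of Proposition~\ref{prop-rhoS} almost verbatim, the only change being that the vector produced in the tropical cone $\sK_\lambda$ of~\eqref{Klambda} must now have \emph{all} its entries finite rather than just one entry in the support of $c$; the extra ingredient is that $\sK_\lambda$, being the solution set of a two-sided tropical linear system, is a tropical cone, hence closed under $\oplus$, so that per-coordinate witnesses can be combined into a single fully finite vector.

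First I would reduce the statement to a non-emptiness question about $\sK_\lambda\cap\R^n$. Concretely, the implication $Ax\leq Bx\implies cx\leq dx$ fails for some $x\in\R^n$ if, and only if, there is a scalar $\lambda>0$ such that $\sK_\lambda$ contains a vector all of whose entries are finite. Indeed, if $y\in\R^n$ satisfies $Ay\leq By$ and $cy>dy$, then $cy$ is finite, since $c$ is not identically $\mpzero$ by Assumption~\ref{assump} and $y$ is finite; therefore $cy>dy$ forces the existence of some $\lambda>0$ with $\lambda dy\leq cy$ (any $\lambda>0$ if $dy=\mpzero$, and $\lambda=cy-dy>0$ otherwise), so $y\in\sK_\lambda\cap\R^n$. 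Conversely, if $y\in\sK_\lambda\cap\R^n$ for some $\lambda>0$, then $\lambda dy\leq cy$ together with $\lambda>0$ and $y$ finite gives $cy>dy$, so $y$ is a counterexample in $\R^n$.

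Next I would invoke Theorem~3.2 of~\cite{AGG}, exactly as in the proof of Proposition~\ref{prop-rhoS}: for fixed $\lambda>0$, the cone $\sK_\lambda$ contains a vector $x$ with $x_j\neq\mpzero$ if, and only if, $\chi_j(g_\lambda)\geq0$. If $\chi_j(g_\lambda)\geq0$ for every $j\in[n]$, then choosing for each $j$ such a vector $x^{(j)}\in\sK_\lambda$ and setting $x\defi\bigoplus_{j\in[n]}x^{(j)}$ produces a vector of $\sK_\lambda$ (a tropical cone is closed under $\oplus$) whose entries satisfy $x_j\geq x^{(j)}_j\neq\mpzero$, hence all finite. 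Conversely, any vector of $\sK_\lambda\cap\R^n$ simultaneously witnesses $\chi_j(g_\lambda)\geq0$ for all $j$. Thus $\sK_\lambda$ contains a fully finite vector if, and only if, $\min_{j\in[n]}\chi_j(g_\lambda)\geq0$, and combining this with the first step yields the proposition. The argument is routine given Proposition~\ref{prop-rhoS}; the only points needing care are the exchange of the strict inequality $cx>dx$ for the existence of a positive $\lambda$ with $\lambda dx\leq cx$, and the fact that a \emph{single} $\lambda>0$ must serve all coordinates at once, which is precisely what the closure of $\sK_\lambda$ under $\oplus$ secures.
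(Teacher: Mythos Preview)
Your proof is correct and follows essentially the same route as the paper: both reduce the statement to the existence of a fully finite vector in $\sK_\lambda$ for some $\lambda>0$, and both appeal to Theorem~3.2 of~\cite{AGG}. The only difference is cosmetic: the paper cites directly the ``finite vector'' clause of that theorem, whereas you invoke the per-coordinate clause and then recover a single finite witness by $\oplus$-combining the coordinate witnesses, which is exactly how that clause is proved.
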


\begin{proof}
The implication $A x\leq B x \implies c x \leq d x$ 
does not hold for all $x\in \R^n$ if, and only if, there exists 
a scalar $\lambda>0$ such that the tropical polyhedral cone $\sK_\lambda$ 
defined in~\eqref{Klambda} contains a finite vector.
Then, as in the proof of Proposition~\ref{prop-rhoS}, 
the result follows from Theorem~3.2 of~\cite{AGG} because 
this theorem shows that $\sK_\lambda$ contains a finite vector if, 
and only if, $\chi_j(g_\lambda) \geq 0$ for all $j\in [n]$.  
\end{proof}

The situation in which $cx=dx=\mpzero $ for some non-identically $\mpzero $ 
vector $x$ in the tropical cone 
$\left\{ x\in \maxplus^n\mid Ax\leq Bx \right\}$ 
appears to be degenerate. Hence, 
{\em in the sequel we shall use the following technical assumption}, 
which, as we shall shortly see, implies no loss of generality.

\begin{assumption}\label{infty_assump}
If $x \in \maxplus^n$ is such that $A x \leq B x$ and $c x=d x=\mpzero $, 
then $x$ is the identically $\mpzero $ vector.
\end{assumption}

The following lemma shows that
the vector $d$ may always be required to be finite.
Then, the previous assumption is trivially satisfied.

\begin{lemma}
Let the constant $M$ be defined as in Proposition~\ref{prop-genex}, 
and define the vector $d'\in\R^n$ by
\[
d'_i\defi \begin{cases}d_i& \text{if }d_i\neq \mpzero \\
-M-1+\min_{j\in[n],c_j\neq \mpzero } c_j &\text{otherwise.}
\end{cases}
\]
Then $d$ can be replaced by $d'$ without changing
the validity of the implication $Ax\leq Bx\implies cx\leq dx$.
\end{lemma}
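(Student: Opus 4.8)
The plan is to prove the two directions separately, the first being immediate. By construction $d'\geq d$ coordinatewise: each coordinate with $d_j=\mpzero$ is raised to a finite value, while the others are unchanged. Hence $d'x\geq dx$ for every $x\in\maxplus^n$, so $cx\leq dx$ implies $cx\leq d'x$, and the validity of $Ax\leq Bx\implies cx\leq dx$ yields at once the validity of $Ax\leq Bx\implies cx\leq d'x$. The content of the lemma is the converse, which I would establish by contraposition: assuming that $Ax\leq Bx\implies cx\leq dx$ fails, I will produce a vector $y$ with $Ay\leq By$ and $cy>d'y$, showing that $Ax\leq Bx\implies cx\leq d'x$ fails as well.

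The key tool is Corollary~\ref{coro-smallcex} applied to $d$: it supplies a counterexample $y\in\maxplus^n$ satisfying~\eqref{e-cbound}, i.e.\ $Ay\leq By$, $cy>dy$, and any two finite coordinates of $y$ differ by at most the constant $M$ of Proposition~\ref{prop-genex}. Since the transformation $y\mapsto\mu+y$, for $\mu\in\R$, preserves each of these three properties, I would normalize $y$ so that its least finite coordinate equals $0$; then every finite coordinate of $y$ lies in $[0,M]$. Two elementary observations follow: $cy$ must be finite (otherwise $cy=\mpzero$ whereas $dy\geq\mpzero$, contradicting $cy>dy$), and, choosing an index $i_0$ for which $c_{i_0}$ and $y_{i_0}$ are both finite, $cy\geq c_{i_0}+y_{i_0}\geq c_{\min}$, where $c_{\min}\defi\min_{j\in[n],\,c_j\neq\mpzero}c_j$ is the scalar entering the definition of $d'$.

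It then remains only to verify that $cy>d'_j+y_j$ for every $j\in[n]$, which gives $cy>d'y$. If $d_j\neq\mpzero$, then $d'_j=d_j$, so $d'_j+y_j=d_j+y_j\leq dy<cy$. If $d_j=\mpzero$ and $y_j=\mpzero$, then $d'_j+y_j=\mpzero<cy$. Finally, if $d_j=\mpzero$ and $y_j$ is finite, then $0\leq y_j\leq M$, hence $d'_j+y_j=-M-1+c_{\min}+y_j\leq c_{\min}-1<c_{\min}\leq cy$. In all cases $d'_j+y_j<cy$, so $d'y=\max_{j\in[n]}(d'_j+y_j)<cy$, which contradicts the assumed validity of $Ax\leq Bx\implies cx\leq d'x$, since $Ay\leq By$.

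The single genuine obstacle is to control the magnitude of a counterexample; this is precisely what Corollary~\ref{coro-smallcex} provides, relying in turn on the hypergraph characterization of the extreme rays used in Proposition~\ref{prop-genex}. Once a counterexample of bounded spread is available, the remaining verification is a routine computation with the max-plus operations, as sketched above.
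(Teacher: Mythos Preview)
Your proof is correct and follows essentially the same approach as the paper's: both use Corollary~\ref{coro-smallcex} to obtain a counterexample $y$ with bounded coordinate spread, then verify that the modified entries $d'_k$ still satisfy $d'_k+y_k<cy$. The only cosmetic difference is that you normalize $y$ so its smallest finite coordinate is $0$ and then bound $cy\geq c_{\min}$, whereas the paper fixes an index $j$ with $c_jy_j=cy$ and compares $d'_ky_k$ directly to $c_jy_j$ via $|y_k-y_j|\leq M$; the two computations are equivalent.
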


\begin{proof}
Since $d\leq d'$, the implication in which $d'$ appears
is weaker than the one with $d$. Assume that the latter implication
does not hold. Then, by Corollary~\ref{coro-smallcex},
there is a vector $y$ such that $A y \leq B y$ and $d y < c y$, 
and this vector satisfies~\eqref{e-cbound}.
Let $j$ denote any index such that $c_j y_j=c y$. 
Using~\eqref{e-cbound} we deduce that 
$d'_k y_k \leq (-M-1) c_j y_k < c_j y_j = c y$ 
for any $k$ such that $d_k=\mpzero $, and so $d' y < c y$,
showing that the implication in which $d$ is replaced by $d'$ does
not hold.
\end{proof}

Thanks to Assumption~\ref{infty_assump}, 
the validity of the implication $Ax\leq Bx\implies cx\leq dx$ 
can now be characterized in terms of the spectral radius. 

\begin{proposition}\label{prop-rho}
The system of inequalities $A x\leq B x$ does not imply 
the scalar inequality $c x \leq d x$ if, 
and only if, $\rho(g_\lambda) \geq 0$ for some $\lambda > 0$.
\end{proposition}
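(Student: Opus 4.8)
The plan is to obtain this statement as a refinement of Proposition~\ref{prop-rhoS}: the two statements differ only in that the quantity $\max_{j\in[n],\,c_j\neq\mpzero}\chi_j(g_\lambda)$ appearing there is replaced here by $\rho(g_\lambda)$, which by the Collatz--Wielandt identity~\eqref{e-rho-isamax} equals $\max_{j\in[n]}\chi_j(g_\lambda)$. Since $\{j\in[n]\mid c_j\neq\mpzero\}\subseteq[n]$, we have $\max_{j\in[n],\,c_j\neq\mpzero}\chi_j(g_\lambda)\leq\rho(g_\lambda)$ for every $\lambda$, so the ``only if'' part follows at once from Proposition~\ref{prop-rhoS}: if the system does not imply $cx\leq dx$, there is $\lambda>0$ with $0\leq\max_{j\in[n],\,c_j\neq\mpzero}\chi_j(g_\lambda)\leq\rho(g_\lambda)$.

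For the converse I would argue as follows. Suppose $\rho(g_\lambda)\geq 0$ for some $\lambda>0$, i.e.\ $\chi_{j_0}(g_\lambda)\geq 0$ for some $j_0\in[n]$. By Theorem~3.2 of~\cite{AGG}, the tropical polyhedral cone $\sK_\lambda$ of~\eqref{Klambda} then contains a vector $x$ with $x_{j_0}\neq\mpzero$; in particular $x$ is not identically $\mpzero$, $Ax\leq Bx$, and $\lambda dx\leq cx$. The crucial step is to exclude $cx=\mpzero$: if $cx=\mpzero$ then $\lambda dx\leq cx=\mpzero$ forces $dx=\mpzero$, so $Ax\leq Bx$ together with $cx=dx=\mpzero$ and $x\neq\mpzero$ contradicts Assumption~\ref{infty_assump} (which holds without loss of generality by the preceding lemma, allowing one to take $d$ finite). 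Hence $cx\neq\mpzero$, so $x_j\neq\mpzero$ for some $j$ with $c_j\neq\mpzero$; applying Theorem~3.2 of~\cite{AGG} once more yields $\chi_j(g_\lambda)\geq 0$ for that $j$, so $\max_{j\in[n],\,c_j\neq\mpzero}\chi_j(g_\lambda)\geq 0$, and Proposition~\ref{prop-rhoS} shows that $Ax\leq Bx$ does not imply $cx\leq dx$.

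An alternative, self-contained route parallels the proofs of Propositions~\ref{prop-rhoS} and~\ref{prop-impl-finite}: by~\eqref{e-rho-isamax} and Theorem~3.2 of~\cite{AGG}, the condition $\rho(g_\lambda)\geq 0$ for some $\lambda>0$ is equivalent to the nontriviality of $\sK_\lambda$ for some $\lambda>0$, and one checks directly that $\sK_\lambda$ being nontrivial for some $\lambda>0$ is in turn equivalent to the failure of the implication $Ax\leq Bx\implies cx\leq dx$ --- given a counterexample $y$ one takes $\lambda=(cy)(dy)^{-1}$ (tropical inverse, or any positive $\lambda$ if $dy=\mpzero$), and conversely a nonzero $x\in\sK_\lambda$ gives the counterexample $y:=x$ once one knows $cx\neq\mpzero$. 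In either approach, the single nontrivial point, and hence the main obstacle, is ruling out the degenerate case of a nonzero $x\in\sK_\lambda$ with $cx=\mpzero$; this is exactly what Assumption~\ref{infty_assump} is designed to do.
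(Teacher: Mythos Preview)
Your proposal is correct. Your ``alternative, self-contained route'' is exactly the paper's proof: the paper shows that, thanks to Assumption~\ref{infty_assump}, the failure of the implication is equivalent to $\sK_\lambda$ being nontrivial for some $\lambda>0$, and then invokes Theorem~3.1 (together with Lemma~2.8 and Proposition~2.11) of~\cite{AGG} to identify nontriviality of $\sK_\lambda$ with $\rho(g_\lambda)\geq 0$. Your first approach---reducing to Proposition~\ref{prop-rhoS} via Theorem~3.2 of~\cite{AGG}---is a mild detour (you pass from $\rho\geq 0$ to a nonzero $x\in\sK_\lambda$, use Assumption~\ref{infty_assump} to force $cx\neq\mpzero$, then go \emph{back} to a $\chi_j$ statement to feed into Proposition~\ref{prop-rhoS}), but it is sound and correctly isolates the one nontrivial point, namely that Assumption~\ref{infty_assump} rules out the degenerate case $cx=dx=\mpzero$.
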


\begin{proof}
Since Assumption~\ref{infty_assump} holds, 
there is a vector $y$ such that $A y\leq B y$ and $c y>d y$ if, 
and only if, there exists a scalar $\lambda>0$ such that 
the tropical polyhedral cone $\sK_\lambda$ defined in~\eqref{Klambda} 
is not trivial (i.e., not reduced to the identically $\mpzero$ vector).
Then, the conclusion follows from Theorem~3.1 of~\cite{AGG}, 
which shows that $\sK_\lambda$ is not trivial if, and only if, 
the mean payoff game associated with the system of inequalities 
$A x\leq B x$, $\lambda d x \leq c x$ has at least one 
winning initial state for Player Max, 
which by Lemma~2.8 and Proposition~2.11 of the same paper holds if, 
and only if, the associated dynamic programming operator $g_\lambda$ 
has spectral radius at least $0$.
\end{proof}

We call the map $\lambda \mapsto \rho(g_\lambda)$ the \NEW{spectral function}.
The idea of considering a parametric spectral radius somehow
similar to this one appears in~\cite{sergeev},
where it is used to solve a different problem (two-sided eigenproblem).
We next indicate some elementary properties of the spectral function.

\begin{lemma}
The spectral function $\lambda\mapsto \rho(g_\lambda)$ is non-increasing.
\end{lemma}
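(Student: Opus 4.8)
The plan is to exploit the fact that the parameter $\lambda$ enters the definition of the dynamic programming operator $g_\lambda$ only through the single term $-\lambda-d_j+\max_{k\in[n]}(c_k+x_k)$, which is decreasing in $\lambda$ when $d_j\in\R$ and absent when $d_j=\mpzero$. Since a pointwise minimum of non-increasing functions of $\lambda$ is again non-increasing, this immediately gives that for each fixed $x\in\maxplus^n$ the vector $g_\lambda(x)$ is non-increasing in $\lambda$; that is, $\lambda\le\lambda'$ implies $g_{\lambda'}(x)\le g_\lambda(x)$ for all $x$.

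First I would upgrade this to a comparison of iterates. Fix $\lambda\le\lambda'$ and prove $g_{\lambda'}^k(0)\le g_\lambda^k(0)$ for all $k\ge 0$ by induction, the base case $k=0$ being trivial. Given the inequality for $k$, apply the order preserving map $g_\lambda$ to obtain $g_\lambda\big(g_{\lambda'}^k(0)\big)\le g_\lambda\big(g_\lambda^k(0)\big)=g_\lambda^{k+1}(0)$; on the other hand, the pointwise domination applied at the point $g_{\lambda'}^k(0)$ gives $g_{\lambda'}^{k+1}(0)=g_{\lambda'}\big(g_{\lambda'}^k(0)\big)\le g_\lambda\big(g_{\lambda'}^k(0)\big)$. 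Chaining the two inequalities proves the claim for $k+1$. (Note that $g_\lambda^k(0)$ stays in $\R^n$ since $g_\lambda$ preserves $\R^n$, so these comparisons make sense coordinatewise.)

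Finally I would invoke the Collatz--Wielandt formula~\eqref{e-weakcw}, $\rho(g_\lambda)=\lim_{k\to\infty}\frac{1}{k}\max_{j\in[n]}\big(g_\lambda^k(0)\big)_j$. Taking the maximum over $j$ in the coordinatewise inequality $g_{\lambda'}^k(0)\le g_\lambda^k(0)$, dividing by $k$, and letting $k\to\infty$ yields $\rho(g_{\lambda'})\le\rho(g_\lambda)$, which is the assertion. There is no real obstacle here; the only point requiring a little care is keeping straight, in the inductive step, which fact is invoked where---the order preservation of $g_\lambda$ for the first inequality and the $\lambda$-monotonicity $g_{\lambda'}\le g_\lambda$ for the second. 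An equally short alternative avoids the iterates: if $x\ne\mpzero$ is a non-linear eigenvector with $g_{\lambda'}(x)=\rho(g_{\lambda'})+x$, then $g_\lambda(x)\ge g_{\lambda'}(x)=\rho(g_{\lambda'})+x$, and the Collatz--Wielandt characterization of $\rho$ gives $\rho(g_\lambda)\ge\rho(g_{\lambda'})$.
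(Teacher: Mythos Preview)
Your proof is correct and essentially identical to the paper's: both establish by induction that $\lambda\mapsto g_\lambda^k(x)$ is order reversing (you take $x=0$, the paper a general $x\in\R^n$), combining the pointwise inequality $g_{\lambda'}\le g_\lambda$ with the order preservation of $g_\lambda$ in the inductive step, and then pass to the limit via~\eqref{e-weakcw}. The eigenvector alternative you sketch at the end is a slightly different route; note that it relies on the ``sup'' direction of Collatz--Wielandt (from $g_\lambda(x)\ge\mu+x$ with $x\not\equiv\mpzero$ deduce $\rho(g_\lambda)\ge\mu$), which is not the formulation~\eqref{CWformula} in the paper but follows readily from~\eqref{e-weakcw} after iterating and comparing with $g_\lambda^k(0)$.
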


\begin{proof}
We claim that for all $k\in \N $ and $x \in \R^n$, 
the map $\lambda \mapsto g^k_\lambda(x)$ is order reversing from $\R$ to $\R^n$, 
meaning that $\lambda \leq \mu$ implies $g^k_\mu(x)\leq g^k_\lambda(x)$.
We prove this claim by induction. For $k=1$, this property is immediate.
Assume that our claim holds for $k=r$. Then, if $\lambda\leq \mu$, 
we have $g_\mu^{r+1}(x)=g_\mu(g^r_\mu(x))\leq g_\lambda(g^r_\mu(x))$,
and since $y\mapsto g_\lambda(y)$ is order preserving, 
using the induction hypothesis,
 we conclude that $g_\lambda(g^r_{\mu}(x))\leq g_\lambda^{r+1}(x)$, 
proving our claim. Thus, 
it follows from~\eqref{e-weakcw} that the map $\lambda\mapsto \rho(g_\lambda)$, 
which is a pointwise limit of non-increasing functions, is non-increasing.
\end{proof}

Now we show that the spectral function is piecewise affine, by
describing explicitly a complete family of ``tangent'' affine maps.
This description involves the notion of strategy.

We call \NEW{strategy} for Player Min a map $\sigma$ which assigns
to each column node $j\in [n]$ a row node $\sigma(j)\in [p+1]$ such that 
$A_{\sigma(j)j}\in \R$ if $\sigma(j)\in [p]$, or $d_j\in \R$ if $\sigma(j)=p+1$.
We associate with the strategy $\sigma$ the map $g_\lambda^\sigma$ defined by:  
\[ 
[g_\lambda^\sigma(x)]_j \defi 
\begin{cases}
-A_{\sigma(j)j}+\max_{k\in[n]}(B_{\sigma(j)k}+x_k)& \text{ if } \sigma(j)\in [p]\enspace , \\
-\lambda -d_j+\max_{k\in[n]}(c_k+x_k)& \text{ if } \sigma(j)= p+1 \enspace .
\end{cases}
\]
Observe that with the tropical notation, 
\[ 
[g_\lambda^\sigma(x)]_j = 
\begin{cases}
A_{\sigma(j)j}^{-1}B_{\sigma(j)\cdot } \, x & \text{ if }\sigma(j)\in [p]\enspace , \\ 
\lambda^{-1}d_j^{-1} c x & \text{ if }\sigma(j)= p+1 \enspace , 
\end{cases}
\]
so $g_\lambda^\sigma$ is a tropical linear map.  
By the definition of $g_\lambda$, 
we have the following selection property,
which holds for all $\lambda\in \R$,
\begin{align}\label{e-selection}
\makebox{For each } x\in \maxplus^n\makebox{ there exists a strategy }\sigma 
\makebox{ such that } g_\lambda(x)=g_\lambda^\sigma(x) \enspace .
\end{align}
We shall use the following result, which may be thought
of as a variant
of the ``duality theorem'' established
in~\cite{gg98a} (see also~\cite{gg0}).

\begin{lemma}\label{lem-morphism}
Let $g:\maxplus^n \to \maxplus^n$ be a continuous order preserving map 
that commutes with the addition of a scalar. 
Assume that $g$ is the pointwise infimum of a family of maps 
$\{ h^\sigma \}_{\sigma\in\Sigma}$ all of which are continuous,  
order preserving and commute with the addition of a scalar.  
If for each $x\in \maxplus^n$ there exists $\sigma \in \Sigma $ 
such that $g(x)=h^\sigma(x)$ (selection property), then
\begin{align}\label{e-morphism}
\rho(g)=\min_{\sigma\in\Sigma} \rho(h^\sigma) \enspace .
\end{align}
\end{lemma}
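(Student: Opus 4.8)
The plan is to prove the two inequalities $\rho(g)\le\min_{\sigma}\rho(h^\sigma)$ and $\rho(g)\ge\min_{\sigma}\rho(h^\sigma)$ separately. The first inequality is the easy direction: from $g\le h^\sigma$ for every $\sigma$ and the fact that all the maps in sight are order preserving and commute with the addition of a scalar, one gets a monotonicity property of the spectral radius, namely that $f_1\le f_2$ implies $\rho(f_1)\le\rho(f_2)$. Indeed, using the Collatz--Wielandt type formula~\eqref{e-weakcw}, $\rho(f)=\lim_k\tfrac1k\max_j(f^k(0))_j$, and iterating $g\le h^\sigma$ together with order preservation gives $g^k(0)\le (h^\sigma)^k(0)$ coordinatewise for all $k$, hence $\rho(g)\le\rho(h^\sigma)$; taking the infimum over $\sigma$ yields $\rho(g)\le\min_\sigma\rho(h^\sigma)$. (One should double-check that $g^k(0)$ and $(h^\sigma)^k(0)$ stay finite, or pass to $\maxplus^n$ and use the continuous extension; this is where the ``commutes with the addition of a scalar'' hypothesis keeps the orbits under control.)

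For the reverse inequality, let $\mu\defi\rho(g)$ and let $x\in\maxplus^n$ be a non-identically $\mpzero$ eigenvector, so $g(x)=\mu+x$. By the selection property there exists $\sigma\in\Sigma$ with $g(x)=h^\sigma(x)$, hence $h^\sigma(x)=\mu+x$: the same vector $x$ is an eigenvector of $h^\sigma$ with the same additive eigenvalue $\mu$. It follows that $\rho(h^\sigma)\ge\mu=\rho(g)$. Combined with $\rho(g)\le\rho(h^\sigma)$ from the first part, we get $\rho(h^\sigma)=\rho(g)$ for this particular $\sigma$, and therefore $\min_{\sigma\in\Sigma}\rho(h^\sigma)\le\rho(h^\sigma)=\rho(g)$, which is the desired inequality. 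Putting the two inequalities together gives~\eqref{e-morphism}.

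The main obstacle, and the point that needs care rather than cleverness, is the existence of the eigenvector $x$ realizing $\rho(g)$: the definition of $\rho(f)$ as the maximal $\mu$ for which a non-identically $\mpzero$ eigenvector exists presupposes that this maximum is attained. For the maps $g_\lambda$ of interest this follows from Kohlberg's invariant half-line theorem~\cite{kohlberg} (an eigenvector of $g_\lambda$ is obtained as a limit of the normalized direction of $g_\lambda^k(0)$ along the half-line), and more generally from the results on the nonlinear spectral radius recalled from~\cite{AGG}; one should invoke the appropriate statement there to guarantee that an eigenvector attaining $\rho(g)$ exists in $\maxplus^n$, and that the selection property~\eqref{e-selection}, stated for all $x\in\maxplus^n$, applies to it. A secondary, purely bookkeeping point is to make sure the monotonicity of $\rho$ is applied to maps on $\maxplus^n$ (so that the formula~\eqref{e-weakcw} and the comparison $g^k(0)\le(h^\sigma)^k(0)$ are literally valid), using the continuous extensions of $g$ and the $h^\sigma$ to $\maxplus^n$.
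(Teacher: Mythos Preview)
Your argument for the reverse inequality contains a genuine error. From the eigenvector $x$ with $g(x)=\mu+x$ and the selection property you correctly obtain $h^\sigma(x)=\mu+x$ for some $\sigma$, and hence that $\mu$ is an additive eigenvalue of $h^\sigma$. But by definition $\rho(h^\sigma)$ is the \emph{maximal} eigenvalue, so this only yields $\rho(h^\sigma)\ge\mu=\rho(g)$. You then claim to combine this with ``$\rho(g)\le\rho(h^\sigma)$ from the first part'' to deduce equality---but these two inequalities are the same inequality, not opposite ones. Nothing you have written bounds $\rho(h^\sigma)$ from above by $\rho(g)$, which is precisely what is needed to conclude $\min_\sigma\rho(h^\sigma)\le\rho(g)$.

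The paper circumvents this by using the Collatz--Wielandt formula in its super-solution form,
\[
\rho(f)=\inf\{\mu\in\R\mid \exists\, y\in\R^n,\ f(y)\le \mu+y\}\enspace,
\]
rather than appealing to an exact eigenvector. For any $\alpha>0$ one picks $y\in\R^n$ with $g(y)\le\rho(g)+\alpha+y$; the selection property then gives $h^\sigma(y)=g(y)\le\rho(g)+\alpha+y$, and the formula above immediately yields $\rho(h^\sigma)\le\rho(g)+\alpha$. Letting $\alpha\downarrow 0$ finishes the proof. The key point is that an inequality $h^\sigma(y)\le\mu+y$ bounds $\rho(h^\sigma)$ from \emph{above}, whereas an equality $h^\sigma(x)=\mu+x$ only bounds it from below.
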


\begin{proof}
It follows from the characterization~\eqref{e-weakcw} that the map
$g\mapsto \rho(g)$ is non-decreasing. Hence, 
$\rho(g)\leq \min_{\sigma\in\Sigma} \rho(h^\sigma)$. 

The Collatz-Wielandt formula (see~\cite[Lemma~2.8]{AGG}) shows that 
the spectral radius of a map $f:\maxplus^n \to \maxplus^n$ which is continuous, 
order preserving and commutes with the addition of a scalar 
satisfies the equality: 
\begin{align}\label{CWformula}
\rho(f)=\inf \left\{\mu \in \R \mid \exists y\in \R^n,f(y)\leq \mu + y \right\} \enspace .  
\end{align}
Therefore, for any $\alpha > 0$, there exists a vector $y \in \R^n$  
such that $g(y)\leq \rho(g)+\alpha +y$. Using the selection property, we deduce that 
$h^\sigma(y)=g(y)\leq \rho(g)+\alpha +y$ for some $\sigma \in \Sigma$. 
Hence, by~\eqref{CWformula} we have $\rho(h^\sigma)\leq \rho(g)+\alpha$. 
Since this holds for any $\alpha >0$, 
we conclude that $\min_{\sigma\in\Sigma} \rho(h^\sigma)\leq \rho(g)$.
\end{proof}

By the previous lemma, it follows that 
\begin{align}\label{e-morphism2}
\rho(g_\lambda)=\min_{\sigma}\rho(g_\lambda^\sigma)
\end{align}
for all $\lambda\in \R$, 
where the minimum is taken over the set of all strategies for Player Min. 

A strategy $\sigma$ for Player Min defines a ``one player sub-game'' 
in which only Player Max has to make choices. This sub-game corresponds to the
sub-graph $\sG_\lambda^\sigma$ of $\sG_\lambda$ in which for each column node $j$ 
we delete all the arcs leaving this node 
except the one going to row node $\sigma(j)$. 
Define the length of a circuit in the digraph $\sG_\lambda^\sigma$ 
to be the number of column nodes that it contains. 
Then, it follows from the max-plus spectral theorem
(see for example~\cite{cuni79,BCO92}) that $\rho(g_\lambda^\sigma)$ coincides
with the maximal weight-to-length ratio of circuits in the digraph
$\sG_\lambda^\sigma$, see also~\cite{gg} for a discussion adapted to the present
setting.

The classical Farkas lemma gives a simple ``certificate'' that a linear 
inequality over the reals is implied by a finite family of linear inequalities. 
This certificate consists of nonnegative coefficients 
(Lagrange multipliers) expressing the 
given inequality as a linear combination of the inequalities in the family. 
The following result does the same in the tropical setting. However, 
the certificate is now of a different nature: the collection of Lagrange 
multipliers is replaced by a strategy. 
 
\begin{theorem}[Tropical analogue of Farkas lemma]\label{theo-farkas}
The implication $Ax\leq Bx\implies cx\leq dx$ holds if, and only if,
there exists a strategy $\sigma$ for Player Min such that every circuit
in the digraph $\sG_0^\sigma$ has nonpositive weight, and if a circuit
in this digraph has zero weight, then it passes through row node $p+1$. 
\end{theorem}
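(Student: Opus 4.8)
The plan is to recast the implication in terms of the parametric mean payoff games set up above, and then to analyse, strategy by strategy, how the weights of circuits in the associated one-player subgames depend on the parameter $\lambda$. First I would use the lemma immediately preceding the theorem to assume, without loss of generality, that the vector $d$ is finite; then Assumption~\ref{infty_assump} holds automatically, the arcs of $\sG_\lambda$ from a column node $j$ to row node $p+1$ all carry the finite weight $-\lambda-d_j$, and in particular $\sG_0^\sigma$ is a genuine weighted digraph for each strategy $\sigma$. With this normalization, Proposition~\ref{prop-rho} combined with the identity~\eqref{e-morphism2} shows that the implication $Ax\le Bx\implies cx\le dx$ \emph{fails} if and only if there is some $\lambda>0$ for which $\rho(g_\lambda^\sigma)\ge 0$ holds for \emph{every} strategy $\sigma$ of Player Min.

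The crucial point is to track the dependence on $\lambda$ at the level of circuits. The only arcs of $\sG_\lambda^\sigma$ whose weight depends on $\lambda$ are those entering row node $p+1$, each of which has weight $-\lambda-d_j$ instead of $-d_j$; hence any circuit $C$ of $\sG_\lambda^\sigma$ has weight $w_\lambda(C)=w_0(C)-\lambda\,t(C)$, where $w_0(C)$ is its weight in $\sG_0^\sigma$ and $t(C)\ge 0$ counts the visits of $C$ to row node $p+1$. Since $\rho(g_\lambda^\sigma)$ equals the maximal weight-to-length ratio over circuits of $\sG_\lambda^\sigma$ (as recalled above) and circuit lengths are positive, $\rho(g_\lambda^\sigma)\ge 0$ exactly when some circuit has $w_\lambda(C)\ge 0$. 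I would then establish the per-strategy equivalence: writing $P(\sigma)$ for the circuit condition in the statement (every circuit of $\sG_0^\sigma$ has nonpositive weight, and every zero-weight circuit passes through row node $p+1$), one has that $P(\sigma)$ fails if and only if $\rho(g_\lambda^\sigma)\ge 0$ for some $\lambda>0$. Indeed, if $P(\sigma)$ holds then for \emph{every} $\lambda>0$ each circuit satisfies $w_\lambda(C)=w_0(C)-\lambda t(C)<0$ (it is already negative when $w_0(C)<0$, and equals $-\lambda t(C)\le-\lambda<0$ when $w_0(C)=0$, since then $t(C)\ge 1$), so $\rho(g_\lambda^\sigma)<0$; conversely, if $P(\sigma)$ fails there is either a circuit with $w_0(C)>0$, whence $w_\lambda(C)>0$ for all sufficiently small $\lambda>0$, or a circuit with $w_0(C)=0$ and $t(C)=0$, whence $w_\lambda(C)=0$ for every $\lambda$.

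It then remains to cure the quantifier asymmetry between ``$\exists\lambda\,\forall\sigma$'' and ``$\forall\sigma\,\exists\lambda$''. Here I would use that $w_\lambda(C)=w_0(C)-\lambda t(C)$ is non-increasing in $\lambda$, so $\lambda\mapsto\rho(g_\lambda^\sigma)$ is non-increasing for each $\sigma$; since there are only finitely many strategies, if each $\sigma$ admits some $\lambda_\sigma>0$ with $\rho(g_{\lambda_\sigma}^\sigma)\ge 0$, then $\lambda^\ast:=\min_\sigma\lambda_\sigma>0$ satisfies $\rho(g_{\lambda^\ast}^\sigma)\ge \rho(g_{\lambda_\sigma}^\sigma)\ge 0$ for all $\sigma$. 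Thus ``there is $\lambda>0$ with $\rho(g_\lambda^\sigma)\ge 0$ for all $\sigma$'' is equivalent to ``$P(\sigma)$ fails for every strategy $\sigma$'', and by the first paragraph the implication fails if and only if no strategy satisfies $P$; equivalently, the implication holds if and only if some strategy $\sigma$ satisfies $P(\sigma)$, which is the assertion of the theorem. The step requiring the most care is this final uniformization — obtaining a single $\lambda$ that works for all strategies, using both finiteness of the strategy space and monotonicity in $\lambda$ — together with the bookkeeping that distinguishes whether a zero-weight circuit of $\sG_0^\sigma$ passes through row node $p+1$.
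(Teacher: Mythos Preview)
Your proof is correct and rests on the same ingredients as the paper's: the reduction to finite $d$ via the preceding lemma, Proposition~\ref{prop-rho} together with~\eqref{e-morphism2}, the characterization of $\rho(g_\lambda^\sigma)$ as a maximal circuit weight-to-length ratio, and the formula $w_\lambda(C)=w_0(C)-\lambda\,t(C)$.

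The only structural difference is how you arrange the ``find a common $\lambda$'' step. The paper observes that each $\lambda\mapsto\rho(g_\lambda^\sigma)$ is piecewise affine and, by finiteness of the strategy space, selects a single strategy $\sigma$ with $\rho(g_\lambda)=\rho(g_\lambda^\sigma)$ on an entire interval $[0,\tilde\lambda]$, and then analyses circuits for that one $\sigma$. You instead prove the per-strategy equivalence ``$P(\sigma)$ fails $\iff\exists\lambda>0,\ \rho(g_\lambda^\sigma)\ge 0$'' and then swap the quantifiers $\exists\lambda\,\forall\sigma\leftrightarrow\forall\sigma\,\exists\lambda$ using only monotonicity of $\lambda\mapsto\rho(g_\lambda^\sigma)$ and finiteness of strategies (taking $\lambda^\ast=\min_\sigma\lambda_\sigma$). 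This is slightly more elementary, since you never need the piecewise affine structure, only monotonicity; conversely, the paper's route yields directly a \emph{specific} optimal strategy realizing the minimum in~\eqref{e-morphism2} near $\lambda=0$, which is conceptually pleasant. Both arguments are short and the difference is cosmetic.
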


\begin{proof}
Observe that for each strategy $\sigma$,
the map $\lambda\mapsto \rho(g_\lambda^\sigma)$ is piecewise affine.
Actually, it is given by the maximal weight-to-length ratio of the 
(elementary) circuits in the digraph $\sG_\lambda^\sigma$, 
and the weight of each of these circuits is an affine function of $\lambda$. 
Since there is a finite number of strategies, 
we deduce from~\eqref{e-morphism2} that there exist a strategy $\sigma $ 
and a positive number $\tilde{\lambda}$ such that
\[
\rho(g_\lambda)=\rho(g_\lambda^\sigma)\enspace  ,
\]
for all $\lambda\in [0,\tilde{\lambda}]$.

As the spectral function is non-increasing,
by Proposition~\ref{prop-rho}
it follows that the implication
$Ax\leq Bx\implies cx\leq dx$ holds if, and only if, 
$\rho(g_\lambda)=\rho(g_\lambda^\sigma)<0$ for all $\lambda \in (0,\tilde{\lambda}]$. 
Now, using the characterization of the spectral radius of the
tropical linear map $g_\lambda^\sigma$ as the maximal weight-to-length
ratio of circuits in the digraph $\sG_\lambda^\sigma$, 
from $\rho(g_\lambda^\sigma)<0$ for all $\lambda \in (0,\tilde{\lambda}]$,
we deduce that every circuit in $\sG_0^\sigma$ must have nonpositive weight.
Otherwise, by continuity of $\lambda \mapsto \rho(g_\lambda^\sigma)$, 
we would have $\rho(g_\lambda^\sigma)>0$ for some $\lambda >0$, which is nonsense. We also deduce that every circuit of zero weight in $\sG_0^\sigma$ (if any) must contain an arc on which the parameter $-\lambda$ appears, i.e., an arc of weight $-d_j$
from some column node $j$ to row node $p+1$.
Otherwise, by definition of $\sG_\lambda^\sigma$, 
the weight of this circuit would also be zero in $\sG_\lambda^\sigma$ for 
every $\lambda >0$, contradicting the fact that $\rho(g^\sigma_\lambda)<0$.
This shows that the condition of the theorem is necessary. 

Conversely, assume that there exists a strategy $\sigma $ 
satisfying the condition of the theorem. Then, 
by the characterization of the spectral radius $\rho(g_\lambda^\sigma)$ 
as the maximal weight-to-length ratio of circuits in $\sG_\lambda^\sigma$, 
it follows that $\rho(g_\lambda^\sigma)<0$ for all $\lambda >0$. 
Since by~\eqref{e-morphism2} we have 
$\rho(g_\lambda)\leq \rho(g_\lambda^\sigma)<0$ for all $\lambda >0$, 
from Proposition~\ref{prop-rho} we conclude that the implication
$Ax\leq Bx\implies cx\leq dx$ holds.   
\end{proof} 

We now state a dual result, in which strategies are used
to certify that the implication does not hold.
We shall consider a strategy $\pi$ for Player Max, which
is a map from the set of row nodes to the set of column nodes, 
assigning to each row node $i$ a unique arc leaving it,
with destination to some column node $\pi(i)$. Each such strategy
defines a new sub-game, by erasing all arcs leaving row node $i$ but the one
going to column node $\pi(i)$. We denote by $\maxpolgraph{\pi}{\sG_\lambda}$ the corresponding
sub-graph of $\sG_\lambda$. Define now the map $\maxpolmap{\pi}{g_\lambda}$ by
\[
[\maxpolmap{\pi}{g_\lambda}(x)]_j \defi
\min \Big( \min_{i\in[p],A_{i j}\in \R}\big( -A_{i j}+B_{i\pi(i)}+x_{\pi(i)} \big),
-\lambda -d_j+c_{\pi(p+1)}+x_{\pi(p+1)} \Big) 
\]
if $d_j \in \R $,  
\[
[\maxpolmap{\pi}{g_\lambda}(x)]_j \defi
\min_{i\in[p],A_{i j}\in \R}\big( -A_{i j}+B_{i\pi(i)}+x_{\pi(i)} \big) 
\]
otherwise.
Observe that for every strategy $\pi$ for Player Max, $\maxpolmap{\pi}{g_\lambda}$ 
is a self-map of $\R^n$  that commutes with the addition of a scalar,
and it has a unique (continuous) order preserving extension to
$\maxplus^n$, which is also denoted by $\maxpolmap{\pi}{g_\lambda}$.
Hence, the definition of the additive spectral radius, $\rho$, applies to the map
$\maxpolmap{\pi}{g_\lambda}$.
Since $\maxpolmap{\pi}{g_\lambda} \leq g_\lambda$ and for each $x\in \maxplus^n$  
there exists a strategy $\pi $ such that $g_\lambda (x)=\maxpolmap{\pi}{g_\lambda} (x)$, 
we deduce that 
\begin{align}\label{e-morphism-dual}
\rho(g_\lambda)=\max_{\pi}\rho (\maxpolmap{\pi}{g_\lambda}) 
\end{align}
for all $\lambda\in \R$. Indeed, we have already noted
that the characterization~\eqref{e-weakcw} implies
that the spectral radius $\rho$ of a map is an order preserving
function of this map, and so
\begin{align}\label{e-morphism-dual2}
\rho(g_\lambda)\geq \max_{\pi}\rho (\maxpolmap{\pi}{g_\lambda}) \enspace . 
\end{align}
To see that the equality is attained, the argument is dual
to the proof of Lemma~\ref{lem-morphism} above. If $u$ is an eigenvector
of $g_\lambda$, so that $g_\lambda(u)=\rho(g_\lambda)+u$, 
using the former selection property, we deduce that
$\maxpolmap{\pi}{g_\lambda}(u)=g_\lambda(u)=\rho(g_\lambda)+u$ for some strategy $\pi$
for Player Max, and so, $\rho(g_\lambda)\leq \rho(\maxpolmap{\pi}{g_\lambda})$,
which implies that the equality holds in~\eqref{e-morphism-dual2}.

By applying~\eqref{e-rho-isamax} to $g_\lambda^\pi$, we get 
\begin{align}
\rho (\maxpolmap{\pi}{g_\lambda})=\max_{j\in [n]} \chi_j(\maxpolmap{\pi}{g_\lambda}) \enspace.
\label{e-carac-chi}
\end{align}
Then, using~\eqref{e-morphism-dual} 
and~\eqref{e-carac-chi}, we obtain the following immediate 
consequence of Proposition~\ref{prop-rho}.  
 
\begin{proposition}\label{prop-farkasnew}
The implication $A x\leq B x\implies c x\leq d x$ does not hold if, 
and only if, there exists a strategy $\pi$ for Player Max, 
a column node $j\in [n]$ and a scalar $\lambda>0$ such that 
$\chi_j(\maxpolmap{\pi}{g_\lambda})\geq 0$.  \qed
\end{proposition}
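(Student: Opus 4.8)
The plan is to obtain Proposition~\ref{prop-farkasnew} by directly chaining Proposition~\ref{prop-rho} with the two Collatz--Wielandt type identities \eqref{e-morphism-dual} and \eqref{e-carac-chi} that have already been established; no new construction is needed. First I would invoke Proposition~\ref{prop-rho}, which, under the standing Assumption~\ref{infty_assump}, says that the implication $Ax\leq Bx\implies cx\leq dx$ fails if, and only if, $\rho(g_\lambda)\geq 0$ for some $\lambda>0$. Thus the whole task reduces to re-expressing the condition ``$\rho(g_\lambda)\geq 0$ for some $\lambda>0$'' in terms of strategies for Player Max and initial column nodes.

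Next I would substitute \eqref{e-morphism-dual}, namely $\rho(g_\lambda)=\max_{\pi}\rho(\maxpolmap{\pi}{g_\lambda})$, the maximum being over all strategies $\pi$ for Player Max. Since this holds for every $\lambda\in\R$, it holds in particular for every $\lambda>0$, and therefore $\rho(g_\lambda)\geq 0$ for some $\lambda>0$ if, and only if, there are $\lambda>0$ and a strategy $\pi$ with $\rho(\maxpolmap{\pi}{g_\lambda})\geq 0$. Then I would apply \eqref{e-carac-chi}, i.e.\ $\rho(\maxpolmap{\pi}{g_\lambda})=\max_{j\in[n]}\chi_j(\maxpolmap{\pi}{g_\lambda})$, to rewrite $\rho(\maxpolmap{\pi}{g_\lambda})\geq 0$ as the existence of a column node $j\in[n]$ with $\chi_j(\maxpolmap{\pi}{g_\lambda})\geq 0$. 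Collating the three equivalences yields exactly the claim: the implication fails iff there exist a strategy $\pi$ for Player Max, a column node $j\in[n]$, and a scalar $\lambda>0$ with $\chi_j(\maxpolmap{\pi}{g_\lambda})\geq 0$.

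I do not expect a real obstacle here, since every ingredient is in place; the only points to verify are bookkeeping ones. One should check that \eqref{e-morphism-dual} and \eqref{e-carac-chi} were indeed proved for all real $\lambda$ (hence for all positive $\lambda$), and that the existential quantifier ``for some $\lambda>0$'' may be pushed through the two finite maxima over $\pi$ and over $j$; this is harmless because ``$\max\geq 0$'' over a finite set is the same as ``some element is $\geq 0$'', and nested existential statements combine freely. If desirable, I would close with the game-theoretic reading: $\chi_j(\maxpolmap{\pi}{g_\lambda})$ is the value, starting from column node $j$, of the one-player sub-game of $\sG_\lambda$ obtained by fixing Player Max's moves according to $\pi$ (so that only Player Min still chooses), whence the proposition states that the implication fails precisely when, for some positive value of the parameter, Player Max has a strategy rendering some initial column node a weakly winning state.
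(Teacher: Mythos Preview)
Your proposal is correct and follows exactly the paper's approach: the paper presents Proposition~\ref{prop-farkasnew} as an immediate consequence of Proposition~\ref{prop-rho} obtained by substituting \eqref{e-morphism-dual} and \eqref{e-carac-chi}, which is precisely the chain of equivalences you spell out. There is nothing to add; your bookkeeping remarks about the finite maxima and the quantifier over $\lambda$ are accurate and in keeping with the paper's treatment.
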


The cycle time $\chi(g_\lambda^\pi)$ has a simple characterization.
For each strongly connected component $C$ of the digraph $\sG_\lambda^\pi$,
let $\nu_C$ denote the minimal weight-to-length ratio of the circuits in $C$
(the length being defined as the number of column nodes in the circuit).
Then, it is known that
\begin{align}
\chi_j(g_\lambda^\pi)=\min_{C} \nu_C\label{eq-formulachi}
\end{align}
where the minimum is taken over all the strongly connected components $C$
to which there is a path in $\sG_\lambda^\pi$ from column node $j$ (see for instance~\cite[\S~1.4]{gg} or~\cite{coc-98}). Recall that 
every minimal ratio $\nu_C$ can be computed in polynomial time by Karp's algorithm.

Arguing as in the proof of Theorem~\ref{theo-farkas}
and using~\eqref{eq-formulachi}, we arrive at the following result,
which expresses Proposition~\ref{prop-farkasnew} in combinatorial terms.
This is somehow dual to Theorem~\ref{theo-farkas}. 

\begin{corollary}\label{prop-farkas}
The implication $A x\leq B x\implies c x\leq d x$ does not hold if, 
and only if, 
there exists a strategy $\pi$ for Player Max with the following property:
in the digraph $\maxpolgraph{\pi}{\sG_0}$ 
there exists a column node $j\in [n]$ such that 
every circuit reachable from $j$ has nonnegative weight, 
and if a circuit of zero weight is reachable from $j$, 
then it does not pass through row node $p+1$.\qed
\end{corollary}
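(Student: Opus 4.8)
The plan is to derive this statement from Proposition~\ref{prop-farkasnew} together with the combinatorial description~\eqref{eq-formulachi} of the cycle time of a one-player game, mimicking the argument used to prove Theorem~\ref{theo-farkas}. I would begin by recording the structure of the digraph $\sG_\lambda^\pi$ attached to a fixed strategy $\pi$ for Player Max: its set of nodes and arcs — hence its reachability relation, its list of strongly connected components, and its set of elementary circuits — does not depend on $\lambda$; only the arc weights do, and the parameter $-\lambda$ occurs exactly on the arcs entering row node $p+1$. Since $\sG_\lambda^\pi$ is bipartite between row and column nodes and a simple circuit visits row node $p+1$ at most once, the weight in $\sG_\lambda^\pi$ of an elementary circuit $\gamma$ equals $W(\gamma)$ if $\gamma$ avoids $p+1$ and $W(\gamma)-\lambda$ if it passes through $p+1$, where $W(\gamma)$ is the weight of $\gamma$ in $\sG_0^\pi$; in particular it is affine and non-increasing in $\lambda$. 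I would also note that, for fixed $\pi$ and fixed column node $j$, the map $\lambda\mapsto\chi_j(g_\lambda^\pi)$ is non-increasing — either directly from~\eqref{eq-formulachi}, each $\nu_C(\lambda)$ being a minimum of such non-increasing affine functions, or by the induction-on-iterates argument already used in the excerpt to show that $\lambda\mapsto\rho(g_\lambda)$ is non-increasing.

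For the direct implication, suppose the implication $Ax\leq Bx\implies cx\leq dx$ fails. By Proposition~\ref{prop-farkasnew} there are a strategy $\pi$ for Player Max, a column node $j$ and a scalar $\lambda>0$ with $\chi_j(g_\lambda^\pi)\geq 0$. By the monotonicity above, $\chi_j(g_{\lambda'}^\pi)\geq 0$ for all $\lambda'\in(0,\lambda]$, and by~\eqref{eq-formulachi} this forces $\nu_C(\lambda')\geq 0$, hence $W(\gamma)-m_\gamma\lambda'\geq 0$ (with $m_\gamma\in\{0,1\}$ according to whether $\gamma$ passes through $p+1$), for every elementary circuit $\gamma$ contained in a strongly connected component of $\sG_0^\pi$ reachable from $j$ and every $\lambda'\in(0,\lambda]$. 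Letting $\lambda'\to 0^+$ gives $W(\gamma)\geq 0$; and if $W(\gamma)=0$, then $-m_\gamma\lambda'\geq 0$ for $\lambda'>0$ forces $m_\gamma=0$, i.e.\ $\gamma$ does not pass through row node $p+1$. Since every circuit reachable from $j$ lies in a strongly connected component reachable from $j$, this is exactly the property required of $\pi$ and $j$.

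For the converse, let $\pi$ and $j$ have the stated property in $\sG_0^\pi$. Then each elementary circuit $\gamma$ reachable from $j$ satisfies either $W(\gamma)>0$, or $W(\gamma)=0$ together with $\gamma$ avoiding $p+1$. In the first case the weight $W(\gamma)-m_\gamma\lambda$ of $\gamma$ in $\sG_\lambda^\pi$ is positive once $\lambda$ is small enough (and for all $\lambda$ if $m_\gamma=0$); in the second case it equals $0$ for every $\lambda$. As there are only finitely many elementary circuits, there is $\lambda_0>0$ such that every elementary circuit reachable from $j$ has nonnegative weight in $\sG_\lambda^\pi$ for all $\lambda\in(0,\lambda_0]$; consequently $\nu_C(\lambda)\geq 0$ for every strongly connected component $C$ reachable from $j$, and~\eqref{eq-formulachi} gives $\chi_j(g_\lambda^\pi)\geq 0$. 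Proposition~\ref{prop-farkasnew} then shows that the implication $Ax\leq Bx\implies cx\leq dx$ does not hold.

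The point requiring the most care is the passage to the limit $\lambda\to 0^+$: one must check that a single strategy $\pi$ and column node $j$ serve for a whole interval $(0,\lambda_0]$ of parameter values — which rests on the affine dependence of circuit weights on $\lambda$ and on the finiteness of the set of elementary circuits — and that the purely combinatorial data (reachability from $j$, the strongly connected components, the elementary circuits) are genuinely the same for all $\lambda$. Granted these observations, the argument is a routine transcription of the proof of Theorem~\ref{theo-farkas}, with $\rho(g_\lambda)=\min_\sigma\rho(g_\lambda^\sigma)$ replaced by $\rho(g_\lambda)=\max_\pi\rho(g_\lambda^\pi)$ and Proposition~\ref{prop-rho} replaced by Proposition~\ref{prop-farkasnew}.
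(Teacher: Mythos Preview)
Your proof is correct and follows essentially the same approach the paper intends: the paper merely says to argue as in the proof of Theorem~\ref{theo-farkas}, replacing Proposition~\ref{prop-rho} and~\eqref{e-morphism2} by Proposition~\ref{prop-farkasnew} and~\eqref{e-morphism-dual}, and using the combinatorial formula~\eqref{eq-formulachi}, which is exactly what you do. One small remark: in the forward direction the monotonicity and passage to the limit $\lambda'\to 0^+$ are not actually needed, since $\chi_j(g_\lambda^\pi)\geq 0$ for a single $\lambda>0$ already gives $W(\gamma)-m_\gamma\lambda\geq 0$ for every reachable circuit $\gamma$, hence $W(\gamma)\geq m_\gamma\lambda\geq 0$, and $W(\gamma)=0$ forces $m_\gamma=0$.
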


\begin{example}\label{ex-deduce}

Consider the inequalities $x_1\oplus (-2)x_3 \leq x_2$
and $x_2 \leq (-3)x_1\oplus x_3$. 
We next apply the previous method to show that these inequalities 
imply the inequality $x_1\oplus x_2 \leq x_3$. 
The tropical cones associated with these inequalities are illustrated 
in Figure~\ref{FigDeduc} in barycentric coordinates. 

\begin{figure}
\renewcommand{\baryx}{x_1}
\renewcommand{\baryy}{x_2}
\renewcommand{\baryz}{x_3}
\begin{center}
\begin{minipage}[b]{0.45\textwidth}
\begin{tikzpicture}%
[scale=0.65,>=triangle 45
,vtx/.style={mygreen},
ray/.style={myred}]
\equilateral{7}{100};

\barycenter{g1}{\expo{0}}{\expo{0}}{\expo{0}};
\barycenter{g4}{\expo{0}}{0}{\expo{2}};
\barycenter{g5}{\expo{0}}{\expo{0}}{\expo{2}};
\barycenter{g7}{\expo{3}}{\expo{0}}{0};
\barycenter{g8}{\expo{3}}{\expo{0}}{\expo{0}};
\barycenter{g9}{\expo{0}}{0}{\expo{0}};
\barycenter{g10}{0}{\expo{0}}{\expo{2}};

\barycenter{e1}{\expo{0}}{0}{0};
\barycenter{e2}{0}{\expo{0}}{0};
\barycenter{e3}{0}{0}{\expo{0}};

\barycenter{f1}{0}{\expo{0}}{\expo{0}};
\barycenter{f3}{\expo{0}}{\expo{0}}{0};

\filldraw[lightgray,draw=black,opacity=0.2,thin] (g5) -- (g10) -- (e2) -- (f3) -- cycle;

\node at (8,3.5) {
$
\begin{aligned}
x_1\oplus (-2) x_3 & \leq x_2
\end{aligned}
$};
\end{tikzpicture}
\end{minipage}
\begin{minipage}[b]{0.45\textwidth}
\begin{tikzpicture}%
[scale=0.65,>=triangle 45
,vtx/.style={mygreen},
ray/.style={myred}]
\equilateral{7}{100};

\barycenter{g1}{\expo{0}}{\expo{0}}{\expo{0}};
\barycenter{g4}{\expo{0}}{0}{\expo{2}};
\barycenter{g5}{\expo{0}}{\expo{0}}{\expo{2}};
\barycenter{g7}{\expo{3}}{\expo{0}}{0};
\barycenter{g8}{\expo{3}}{\expo{0}}{\expo{0}};
\barycenter{g9}{\expo{0}}{0}{\expo{0}};

\barycenter{e1}{\expo{0}}{0}{0};
\barycenter{e2}{0}{\expo{0}}{0};
\barycenter{e3}{0}{0}{\expo{0}};

\barycenter{f1}{0}{\expo{0}}{\expo{0}};
\barycenter{f3}{\expo{0}}{\expo{0}}{0};

\filldraw[lightgray,draw=black,opacity=0.2,thin] (g7) -- (e1) -- (e3) -- (f1) -- (g8) -- cycle;

\node at (8,3.5) {
$
\begin{aligned}
x_2 & \leq (-3)x_1\oplus x_3
\end{aligned}
$};
\end{tikzpicture}
\end{minipage}
\end{center}
\begin{center}
\begin{minipage}[b]{0.45\textwidth}
\begin{tikzpicture}%
[scale=0.65,>=triangle 45
,vtx/.style={mygreen},
ray/.style={myred}]
\equilateral{7}{100};

\barycenter{g1}{\expo{0}}{\expo{0}}{\expo{0}};
\barycenter{g4}{\expo{0}}{0}{\expo{2}};
\barycenter{g5}{\expo{0}}{\expo{0}}{\expo{2}};
\barycenter{g7}{\expo{3}}{\expo{0}}{0};
\barycenter{g8}{\expo{3}}{\expo{0}}{\expo{0}};
\barycenter{g9}{\expo{0}}{0}{\expo{0}};

\barycenter{e1}{\expo{0}}{0}{0};
\barycenter{e2}{0}{\expo{0}}{0};
\barycenter{e3}{0}{0}{\expo{0}};

\barycenter{f1}{0}{\expo{0}}{\expo{0}};
\barycenter{f3}{\expo{0}}{\expo{0}}{0};

\filldraw[gray,draw=black,opacity=0.6,very thick] (g1) -- (f1) -- (g10) -- (g5) -- cycle;

\node at (8,3.5) {
$
\begin{aligned}
&x_1\oplus(-2)x_3  \leq x_2\\
&x_2  \leq (-3)x_1\oplus x_3
\end{aligned}
$};
\end{tikzpicture}
\end{minipage}
\begin{minipage}[b]{0.45\textwidth}
\begin{tikzpicture}%
[scale=0.65,>=triangle 45
,vtx/.style={mygreen},
ray/.style={myred}]
\equilateral{7}{100};

\barycenter{g1}{\expo{0}}{\expo{0}}{\expo{0}};
\barycenter{g4}{\expo{0}}{0}{\expo{2}};
\barycenter{g5}{\expo{0}}{\expo{0}}{\expo{2}};
\barycenter{g7}{\expo{3}}{\expo{0}}{0};
\barycenter{g8}{\expo{3}}{\expo{0}}{\expo{0}};
\barycenter{g9}{\expo{0}}{0}{\expo{0}};

\barycenter{e1}{\expo{0}}{0}{0};
\barycenter{e2}{0}{\expo{0}}{0};
\barycenter{e3}{0}{0}{\expo{0}};

\barycenter{f1}{0}{\expo{0}}{\expo{0}};
\barycenter{f3}{\expo{0}}{\expo{0}}{0};

\filldraw[gray,draw=black,opacity=0.6,very thick] (g1) -- (f1) -- (g10) -- (g5) -- cycle;

\filldraw[gray,draw=black,opacity=0.4,thin] (g1) -- (g9) -- (e3) -- (f1) -- cycle;

\node at (8,3.5) {
$
\begin{aligned}
x_1 \oplus x_2 \leq x_3
\end{aligned}
$};
\end{tikzpicture}
\end{minipage}
\end{center}

\caption{The final tropical linear inequality follows from the first two ones, although it cannot be obtained from them by tropical linear combinations.}
\label{FigDeduc}
\end{figure}
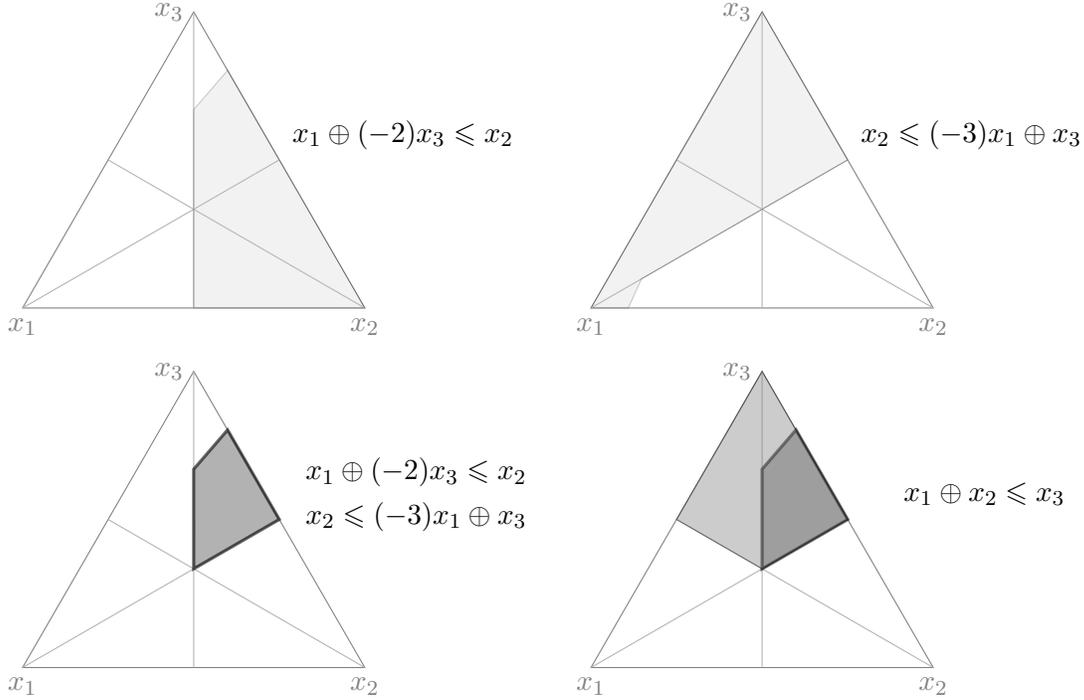

Observe that in this case, we have 
\[
A=\begin{pmatrix}
0 & \mpzero & -2 \\ 
\mpzero & 0 & \mpzero 
\end{pmatrix} \enspace , \enspace 
B=\begin{pmatrix}
\mpzero & 0 & \mpzero  \\ 
-3 & \mpzero & 0 
\end{pmatrix} \enspace , \enspace 
c=\begin{pmatrix}
0 & 0 & \mpzero 
\end{pmatrix} \enspace , \enspace 
\makebox{ and } \enspace 
d=\begin{pmatrix}
\mpzero & \mpzero & 0 
\end{pmatrix} \enspace . 
\]
The associated bipartite digraph $\sG_\lambda$ is depicted in 
Figure~\ref{BipartiteDigraph}, 
where row nodes are represented by squares and column nodes by circles. 
If we consider the strategy $\sigma $ for Player Min defined by 
$\sigma (1)=1$, $\sigma (2)=2$ and $\sigma (3)=3$, 
it can be checked that all circuits in $\sG_0^\sigma $ have nonpositive weight 
and that any circuit of zero weight passes through row node $p+1=3$. 
The latter can also be checked by deleting row node $p+1=3$ from $\sG_0^\sigma $, and the arcs adjacent to it (dotted arcs on Figure~\ref{BipartiteDigraph}, middle) because the resulting digraph contains only one circuit and this circuit has negative weight. 
Therefore, by Theorem~\ref{theo-farkas} we conclude that 
$A x \leq B x$ implies $c x\leq d x$, as can be seen in Figure~\ref{FigDeduc}. 

Consider now the inequality $1x_1\oplus x_2 \leq x_3$ instead  of $x_1\oplus x_2 \leq x_3$, 
so that in Figure~\ref{BipartiteDigraph} the weight of the arc connecting row node  $p+1=3$ 
with column node $1$ is now $1$ instead of $0$. 
If we define the strategy $\pi$ for Player Max by $\pi(1)=2$, $\pi(2)=3$ and $\pi(3)=1$, 
then all circuits in $\maxpolgraph{\pi}{\sG_0} $ have positive weight. Thus, 
from Corollary~\ref{prop-farkas}, it follows that 
$A x \leq B x$ does not imply $c x\leq d x$. 
Observe that $x=(0,0,0)^t$ satisfies  
$A x \leq B x$ but not $c x\leq d x$. 
 
\begin{figure}
\begin{center}
\input{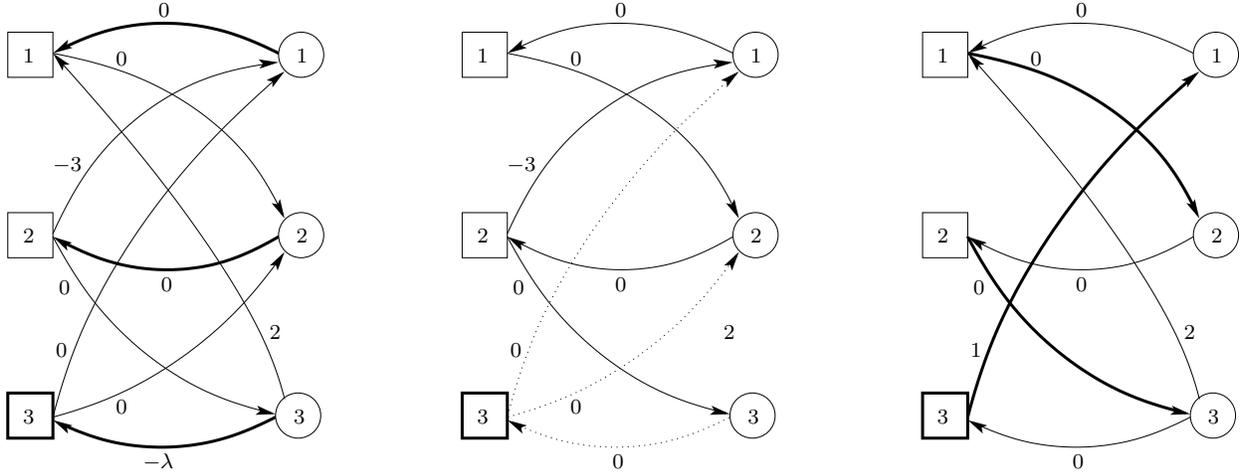}
\end{center}\caption{Illustration of Theorem~\ref{theo-farkas} and Corollary~\ref{prop-farkas}. (Left): The parametric game $\sG_\lambda $ allowing one to check the implication shown in Figure~\ref{FigDeduc}; row nodes are represented by squares (Max plays), column nodes by circles (Min plays); the strategy $\sigma$ for Player Min certifying the implication is shown in bold. (Middle): The sub-game $\sG_0^\sigma$ induced by this strategy: there are no circuits of positive weight, and every circuit of zero weight passes through the special row node $p+1$. (Right): The sub-game $\maxpolgraph{\pi}{\sG_0}$ induced by a strategy $\pi$ for Player Max (in bold), certifying that for a perturbed vector $c$ (the weight $c_1$ of the arc connecting row node $3$ with column node $1$ is now $1$), the implication no longer holds. Every circuit has now positive weight.}
\label{BipartiteDigraph} 
\end{figure}

\end{example}

\begin{remark}
Let us now restrict our attention to instances in which 
the entries of the matrices $A,B$ and the vectors
$c,d$ belong to $\Q_{\max }$. 
Then, Theorem~\ref{theo-farkas} implies that the problem
 ``does $Ax\leq Bx\implies cx\leq dx$ hold?''
is in {\sc NP}. Indeed, any strategy $\sigma$ for Player Min
satisfying the condition of the theorem provides a certificate
which can be checked in polynomial time. 
To see this, it suffices to compute the maximal weight-to-length 
ratio of circuits in $\sG_0^\sigma$, which can be done by applying
Karp's algorithm to every strongly connected component of $\sG_0^\sigma$.
To be valid, the certificate requires these maximal weight-to-length ratios
to be nonpositive. Moreover, if one of these maximal weight-to-length ratios
is zero (indeed, to be valid, only the one corresponding to the 
strongly connected component containing row node $p+1$ could be zero), 
we must check whether there is a circuit of zero 
weight in $\sG_0^\sigma$ which does not pass through row node $p+1$.
This can be verified by deleting row node $p+1$ and the arcs connected to it from the digraph 
$\sG_0^\sigma$ and computing the maximal weight-to-length ratio of circuits in 
the resulting digraph. 
\end{remark}

\begin{remark}
Similarly, Corollary~\ref{prop-farkas} implies that the problem
 ``does $Ax\leq Bx\implies cx\leq dx$ hold?'' belongs to {\sc co-NP}. 
A negative certificate (disqualification)
is now a strategy $\pi$ for Player Max and the validity
of such a certificate can still be checked in polynomial time. 
We next only sketch the argument (which is more involved than in the preceding case), leaving details to the reader.
First, apply~\eqref{eq-formulachi} to compute $\chi(g_0^\pi)$.
This requires calling Karp's algorithm at most $n$ times, and 
can therefore be done in polynomial time. If $\chi_j(g_0^\pi)> 0$ 
for some $j\in [n]$,
the certificate is valid. If $\chi_j(g_0^\pi)< 0$  
for all $j\in [n]$, 
the certificate is invalid. 
If none of the previous conditions is satisfied, 
for each $j\in [n]$ such that $\chi_j(g_0^\pi)=0$
we proceed as follows. 
Assume that the strongly connected component $C$ containing row node $p+1$ 
is reachable from $j$ and that $\nu_C=0$ (otherwise, the certificate is valid).   
Now, consider a potential transformation, which consists, for every arc $(i,j)$ in $C$, in replacing the weight $w_{ij}$ of this arc by $w'_{ij}=u_i+w_{ij}-u_j$, where a real number $u_k$ (the potential) must be chosen for each row or column node $k$. Obviously, this transformation does not change
the weight of circuits, and a fortiori, the weight-to-length ratios of circuits.
It follows from
the duality theorem in linear programming that we can find a potential
such that $w_{ij}'\geq 0$ for all arcs $(i,j)$ in $C$, and then the circuits
of zero weight in $C$ are precisely the circuits composed of those arcs $(i,j)$ such that $w'_{ij}=0$. Now, we delete all the arcs but these ones,
which yields a sub-graph. If no circuit in this sub-graph passes through row node $p+1$, the certificate is valid.
\end{remark}
\begin{remark}
In the proof of the tropical Farkas lemma, following the route of~\cite{AGG},
we used techniques of non-linear Perron-Frobenius theory showing that the spectral radius $\rho$ is a ``morphism''
with respect to the infimum or supremum of families of maps having a selection property, meaning that~\eqref{e-morphism2} and~\eqref{e-morphism-dual} hold.
One might look for an alternative and more combinatorial proof. 
Indeed, we may define directly the value of the mean payoff
game for every initial state as in~\cite{liggettlippman} and then define $\rho$ as
the maximum of this value over all the initial states, instead as the limit
of the value per time unit of the finite horizon game~\eqref{e-weakcw}.
As pointed out above, the theorem of Kohlberg~\cite{kohlberg} implies that the two definitions of $\rho$ coincide, in other words, that the value commutes with the limit. Instead of Kohlberg's theorem,
one might exploit the combinatorial approach of Gurvich, Karzanov, and Khachiyan~\cite{gurvich}, which relies on potentials solving certain systems of inequalities. It should be noted that the potential vector returned by their algorithm is of the same nature as an invariant half-line (the basepoint of an half-line determines a potential).  We finally point out that M\"ohring, Skutella, and Stork~\cite{mohring} established an equivalence between mean payoff games
and certain scheduling problems with and/or constraints,
which turn out to be equivalent to the existence
of finite vectors in a tropical polyhedron. Some techniques used
in~\cite{mohring} might also yield alternative approaches to the present problems.
\end{remark}

When the entries of the matrices $A,B$ and the vectors $c,d$ belong
to $\Z_{\max }$, there turns out to be a simpler characterization. 

\begin{proposition}\label{prop-integer}
The implication $Ax\leq Bx\implies cx\leq dx$ does not hold
if, and only if, $\rho(g_1)\geq 0$.
\end{proposition}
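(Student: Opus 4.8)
First I would invoke Proposition~\ref{prop-rho}, which already says that the implication $Ax\le Bx\implies cx\le dx$ fails \emph{iff} $\rho(g_\lambda)\ge 0$ for \emph{some} $\lambda>0$. The ``if'' direction of the proposition is then immediate: if $\rho(g_1)\ge 0$, then, taking $\lambda=1$ in Proposition~\ref{prop-rho}, the implication does not hold. Everything nontrivial is in the ``only if'' direction, where the point is to upgrade ``some $\lambda>0$'' to ``$\lambda=1$'' using the integrality of the data.

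So assume the implication does not hold; by Proposition~\ref{prop-rho} there is $\lambda_0>0$ with $\rho(g_{\lambda_0})\ge 0$. By the dual morphism identity~\eqref{e-morphism-dual} together with~\eqref{e-carac-chi}, I would pick a strategy $\pi$ for Player Max and a column node $j\in[n]$ with $\chi_j(g_{\lambda_0}^\pi)\ge 0$; by formula~\eqref{eq-formulachi} this means that every elementary circuit $\gamma$ reachable from $j$ in the one-player digraph $\sG_{\lambda_0}^\pi$ has nonnegative weight. The structural observation I would rely on is that, as an unweighted digraph, $\sG_\lambda^\pi$ does not depend on $\lambda$: the parameter $-\lambda$ occurs only as a summand in the weights of the arcs of the form $(j',p+1)$. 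Consequently the weight of a circuit $\gamma$ in $\sG_\lambda^\pi$ has the form $W_\gamma-m_\gamma\lambda$, where $m_\gamma\in\{0,1\}$ is the number of arcs $(j',p+1)$ used by $\gamma$ (at most one, since $\gamma$ is elementary and $p+1$ is a single node) and $W_\gamma$ is a sum of finite entries of $A$, $B$, $c$, $d$, hence an \emph{integer} because $A,B\in\Z_{\max}^{p\times n}$ and $c,d\in\Z_{\max}^n$.

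Now I would round: for a circuit $\gamma$ reachable from $j$ we have $W_\gamma-m_\gamma\lambda_0\ge 0$. If $m_\gamma=0$ then $W_\gamma\ge 0$; if $m_\gamma=1$ then $W_\gamma\ge\lambda_0>0$, which forces $W_\gamma\ge 1$ since $W_\gamma\in\Z$. In both cases $W_\gamma-m_\gamma\ge 0$, i.e.\ $\gamma$ has nonnegative weight in $\sG_1^\pi$. Since this holds for all circuits reachable from $j$, formula~\eqref{eq-formulachi} gives $\chi_j(g_1^\pi)\ge 0$, hence $\rho(g_1^\pi)\ge 0$ by~\eqref{e-carac-chi}, and finally $\rho(g_1)\ge\rho(g_1^\pi)\ge 0$ by~\eqref{e-morphism-dual2}, as desired.

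The only delicate point is the bookkeeping underlying the structural observation --- that every finite arc weight (hence $W_\gamma$) is an integer, and that $m_\gamma\le 1$ for the circuits that determine $\chi_j$; once this is in place, the rounding step $W_\gamma\ge\lambda_0>0\Rightarrow W_\gamma\ge1$ does all the work, and, notably, monotonicity of the spectral function is not even needed here. As an alternative I would phrase the same argument through Corollary~\ref{prop-farkas}: its condition on $\sG_0^\pi$ (every reachable circuit has nonnegative weight, and a zero-weight reachable circuit avoids row node $p+1$) is, by precisely this integrality observation, equivalent to ``every reachable circuit has nonnegative weight in $\sG_1^\pi$'', i.e.\ to $\rho(g_1)\ge 0$.
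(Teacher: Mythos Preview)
Your proof is correct but proceeds along a genuinely different route from the paper's. The paper argues polyhedrally: if the implication fails, Proposition~\ref{prop-integerisenough} furnishes a counterexample $y\in\Z_{\max}^n$ (relying on the fact that the tropical double description algorithm produces extreme generators with entries in the ground subgroup), and then the integrality of $cy$ and $dy$ forces $cy\ge 1+dy$, so $y\in\sK_1$ and $\rho(g_1)\ge 0$. You instead stay entirely on the game side: starting from a Max strategy $\pi$ and a column node $j$ witnessing $\chi_j(g_{\lambda_0}^\pi)\ge 0$, you observe that circuit weights in $\sG_\lambda^\pi$ are affine in $\lambda$ with integer constant term and slope in $\{0,-1\}$, and then round $\lambda_0$ up to $1$. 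Your argument is more self-contained in that it avoids the structural result on integer extreme generators and the double description machinery; the paper's argument is shorter once that result is available and has the merit of exhibiting an explicit integer counterexample rather than just a strategy certificate. Your closing remark that the condition of Corollary~\ref{prop-farkas} on $\sG_0^\pi$ is, under integrality, equivalent to $\rho(g_1)\ge 0$ is a nice way to package the same observation.
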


\begin{proof}
By Proposition~\ref{prop-integerisenough}, if the implication does not hold, 
there is a vector $y\in \Z_{\max}^n$ such that $A y\leq B y$ and $d y<c y$.
Since the finite entries of $d$, $c$ and $y$ are integers, we
must have $\lambda d y\leq c y$ for $\lambda =1$. 
It follows that $\rho(g_1)\geq 0$. 

Conversely, if $\rho(g_1)\geq 0$, 
the system $Ax\leq Bx, \lambda dx\leq cx$ for $\lambda =1$ 
has at least one non-identically $\mpzero $ solution $y\in \maxplus^n$. 
Then, by Assumption~\ref{infty_assump},
we must have $c y>\mpzero $. It follows that $d y<c y$, 
showing that the implication does not hold. 
\end{proof}

Recall that for a given $\lambda$, $\rho(g_\lambda)$, which
is the value of a mean payoff game, can be computed in pseudo-polynomial 
time by standard value iteration arguments. See~\cite{zwick}
and also~\cite[Section~3.2]{AGG} for a refinement using the Collatz-Wielandt property. The existence of a polynomial time algorithm is an open question.

By combining the results of~\cite{AGG,aggut10} and Proposition~\ref{prop-integer}, we arrive at the following result, in which the matrices and vectors
are still required to have entries in $\Z_{\max }$, and payments
of games are still integers.

\begin{corollary}\label{coro-equiv}
The problem ``does a mean payoff game have at least one winning initial 
state?'' (i.e., if $g$ is the dynamic programming
operator of a mean payoff game, does $\rho(g)\geq 0$ hold?) is polynomial time
equivalent to the problem of deciding whether the implication
\[
Ax\leq Bx \implies cx\leq dx
\]
holds.
\end{corollary}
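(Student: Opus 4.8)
The plan is to prove the two polynomial-time reductions separately; the equivalence then follows at once, complementing a yes/no answer being free.

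First I would reduce the implication problem to a single mean payoff game. When $A,B\in\Z_{\max}^{p\times n}$ and $c,d\in\Z_{\max}^n$, Proposition~\ref{prop-integer} asserts that the implication $Ax\leq Bx\implies cx\leq dx$ fails if, and only if, $\rho(g_1)\geq 0$. After performing the preliminary reductions of Section~\ref{SFarkasGame} that arrange Assumptions~\ref{assump} and~\ref{infty_assump} — adding the trivial inequalities $x_j\leq x_j$, eliminating the degenerate rows, and replacing $d$ by the finite vector $d'$ of the lemma following Assumption~\ref{infty_assump} — the data stays integral (the constant $M$ of Proposition~\ref{prop-genex} is an integer since $A,B$ are integral, hence so is $d'$) and its bit-size only grows polynomially. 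The operator $g_1$ is then the dynamic programming operator of the bipartite game $\sG_1$, whose arc weights are read off directly from $A$, $B$, $c$, $d$ and $\lambda=1$, so it is an integer-payment mean payoff game constructible in polynomial time. Thus ``does the implication hold?'' reduces, after negating the answer, to ``is $\rho(g_1)\geq 0$?''.

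Conversely, to reduce the mean payoff game problem to the implication problem, I would invoke the correspondence between mean payoff games and external representations of tropical polyhedral cones from~\cite{AGG,aggut10} (the statement underlying Proposition~\ref{prop-rho} through Theorem~3.1 of~\cite{AGG}): given a mean payoff game with integer payments and dynamic programming operator $g$ on $\R^n$, one builds in polynomial time, with integer entries, matrices $A',B'$ such that the tropical cone $\{x\in\maxplus^n\mid A'x\leq B'x\}$ is non-trivial if, and only if, $\rho(g)\geq 0$. Now take $c$ to be the all-$\mpone$ row vector and let $d$ be the finite vector produced by the normalization lemma applied to $A',B',c$ (equivalently, start from $d=\mpzero$ and normalize, which is harmless since $\{x\mid A'x\leq B'x\}$ is a cone and $c$ is not identically $\mpzero$). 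With this choice the scalar inequality $cx\leq dx$ reads $\max_{i\in[n]}x_i\leq\mpzero$, hence holds precisely when $x$ is the identically $\mpzero$ vector; consequently the implication $A'x\leq B'x\implies cx\leq dx$ holds if, and only if, $\{x\mid A'x\leq B'x\}$ is trivial, i.e.\ if, and only if, $\rho(g)<0$. So ``is $\rho(g)\geq 0$?'' reduces, again after negating the answer, to ``does the implication hold?''.

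Putting the two reductions together yields the polynomial-time equivalence. The substantive content is bookkeeping: checking that every normalization step keeps the entries in $\Z_{\max}$ and polynomially bounded, so that Proposition~\ref{prop-integer} and the game construction of~\cite{AGG,aggut10} remain applicable, and verifying that the particular choice of $c$ and $d$ in the second reduction genuinely encodes non-triviality of the cone as the failure of a single scalar implication. I expect the only mild obstacle to be making the second reduction fully explicit, that is, citing precisely the construction of $A',B'$ from an arbitrary mean payoff game in~\cite{AGG,aggut10} and confirming it preserves integrality and polynomial size.
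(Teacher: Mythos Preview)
Your proposal is correct and follows essentially the same route as the paper. Both directions match: reducing the implication to a game via Proposition~\ref{prop-integer} and the operator $g_1$, and reducing the game to an implication via Theorem~3.1 of~\cite{AGG} together with a choice of $c,d$ that makes $cx\leq dx$ hold only for the identically $\mpzero$ vector. The paper phrases the latter choice more directly as ``any $c,d$ with $c_j>d_j>\mpzero$ for all $j$'', of which your construction (all-$\mpone$ vector $c$ and $d$ obtained from $\mpzero$ via the normalization lemma) is a particular instance; your added bookkeeping about integrality and polynomial size is sound and slightly more explicit than the paper's own proof.
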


\begin{proof}
Theorem~3.1 of~\cite{AGG} shows that checking whether $\rho(g)\geq 0$ is 
polynomial time equivalent to deciding whether an associated tropical
polyhedral cone $\{x\in \maxplus^n\mid Ax\leq Bx\}$ is not reduced to the trivial
(identically $\mpzero $) vector. The latter reduces to checking whether
the implication $A x\leq B x\implies c x\leq d x$ does not hold, where
$c,d$ are any pair of vectors such that $c_j > d_j > \mpzero$ for all $j \in [n]$.

Conversely, assume we have an oracle allowing us to decide whether $\rho(g)\geq 0$ for any dynamic programming operator $g$ of a mean payoff game with integer rewards in which $n$ states belong to Player Min. By Proposition~\ref{prop-integer}, it suffices to apply this oracle to the map $g_{1}$ to decide whether the implication holds.
\end{proof}

\begin{remark}
When $\rho(g_0)<0$, the unique solution of the system $A x\leq B x$, 
$d x\leq c x$ is the identically $\mpzero $ vector, and vice versa. 
Thus, the stronger implication
\[
A x\leq B x ,\; x\not\equiv \mpzero  \implies c x < d x 
\]
is characterized by $\rho(g_0)<0$.
\end{remark}

Proposition~\ref{prop-integer} leads to a greedy algorithm
to construct non-redundant
systems of inequalities defining a tropical polyhedral cone $\sK$.
With this aim, we apply the following procedure.

{\em(i)}\/ We start from the extreme rays of the polar of $\sK$,
which correspond to a finite family of inequalities $a^jx\leq b^jx$, $j\in J$.

{\em(ii)}\/ We check, using the characterization of the proposition
(by computing $\rho(g_{1})$, the value of a mean payoff game), 
whether any of these inequalities is implied by the other ones.
If this is the case, we delete the inequality from the list.

In this way, we arrive at a minimal set of inequalities
defining $\sK$. The next example shows that such a set
is not unique: running the previous greedy algorithm by
scanning the inequalities in different orders
yields incomparable minimal sets of defining inequalities.

For instance, if $\sK:=\mathcal{P}(5,4)$ is the tropical cyclic
polyhedral cone with five extreme rays in dimension 4, 
with $t_i=i$ for $i\in [5]$ in the definition of Example~\ref{Example2}, 
i.e. if $\sK$ is the set of tropical linear combinations of the rows of the matrix
\[
G=
\left(\begin{array}{cccc}
0&1&2& 3   \\
0&2&4& 6   \\
0&3&6& 9   \\
0&4&8& 12  \\
0&5&10&15  
\end{array}\right) \enspace ,
\]
applying the previous algorithm we get the following minimal 
system of inequalities defining $\sK$:

\[
\begin{array}{rlccrl}
-1x_2  &\leq x_1   \oplus -3x_3 &&&
-2x_2  &\leq x_1   \oplus -5x_3 \\
-3x_2  &\leq x_1   \oplus -7x_3&&&
-15x_4 &\leq x_1 \\
-2x_3  &\leq  x_2 \oplus -5x_4 &&&
-3x_3  &\leq x_2 \oplus -7x_4 \\
-4x_3  &\leq x_2 \oplus -9x_4&&&
-5x_3  &\leq x_2 \\
-5x_4  &\leq x_3 &&&
 x_1   &\leq -1x_2 \\
 x_1   &\leq -2x_3&&&
 x_2   &\leq -2x_4\\
-4x_2  &\leq  x_1\oplus -9x_3 &&&
-1x_3  &\leq x_2 \oplus -3x_4 
\end{array}
\]
It can be checked that by replacing the four inequalities on the last two rows
by
\[
\begin{array}{rlccrl}
 x_2   &\leq -1x_3 &&&
 x_3   &\leq -1x_4\\
-4x_2  &\leq  x_1 \oplus -14x_4 &&&
-2x_3  &\leq  x_1 \oplus -4x_4
\end{array}
\]
we still get a minimal defining system. 

The following vectors are certificates that the first system of $14$ inequalities above is minimal: 
each vector satisfies all the inequalities but the one on the same row and column. 
\[
\begin{array}{cc}
(0,2,2,4) & (0,3,5,7) \\
(0,4,7,10) & (\mpzero ,0,5,10) \\
(0,3,6,8)  & (0,4,8,11) \\
(0,6,11,15)  & (0,1,10,15) \\
(0,1,2,15)  & (0,0,2,4) \\
(2,3,0,5)   & (0,1,2,\mpzero ) \\
(0,7,11,15)  & (0,2,4,4) 
\end{array}
\]
For the second system of inequalities, the certificates are: 
\[
\begin{array}{cc}
(0,2,3,5) & (0,3,5,7) \\
(0,4,7,10) & (\mpzero ,0,5,10) \\
(0,3,6,8)  & (0,4,8,11) \\
(0,4,9,13)  & (0,1,10,15) \\
(0,1,2,15)  & (0,\mpzero ,\mpzero ,\mpzero ) \\
(0,1,\mpzero ,\mpzero )  & (0,1,2,\mpzero ) \\
(0,7,11,15)  & (0,2,4,5) 
\end{array}
\]

Let us finally give more details on how the previous inequalities were obtained. We know
from Proposition~\ref{PropExtCyclic} that the polar cone of $\mathcal{K}$
has precisely $36$ extreme rays. Excluding the $8$ inequalities
$x_i\geq x_i$ and $x_i\geq \mpzero$, for $i\in[4]$, we have $28$
non-trivial inequalities. 
The latter can be obtained by enumerating the 
corresponding tropically allowed lattice paths, 
following the proof of Proposition~\ref{PropExtCyclic},
or using directly the tropical polyhedral library TPLib~\cite{TPLib}.
These $28$ inequalities consist of the $18$ inequalities listed in the two
groups above, together with the following $10$ inequalities:
\[\begin{array}{rlccrl}
 3x_1 &\leq x_4&&&
 4x_2 &\leq 5x_1 \oplus x_4\\
 3x_3 &\leq 7x_1 \oplus x_4
&&&
 4x_3 &\leq 10x_1 \oplus x_4\\
 5x_3 &\leq 13x_1 \oplus x_4&&&
 6x_2 &\leq 8x_1 \oplus x_4\\
 8x_2 &\leq 11x_1 \oplus x_4&&&
 -10x_3 &\leq x_1\\
 -10x_4 &\leq x_2&&&
 -5x_2 &\leq x_1
\end{array}
\]
Then, we eliminated successively redundant inequalities, by using
the previously mentioned greedy method, in which we used
the value iteration algorithm of~\cite{zwick}, or rather its variant in~\cite{AGG}, to compute $\rho(g_1)$ at each step.

\newcommand{\etalchar}[1]{$^{#1}$}
\def\cprime{$'$} \def\cprime{$'$}

\end{document}